\newsavebox\CBox
\newcommand\hcancel[2][0.5pt]{%
  \ifmmode\sbox\CBox{$#2$}\else\sbox\CBox{#2}\fi%
  \makebox[0pt][l]{\usebox\CBox}%
  \rule[0.5\ht\CBox-#1/2]{\wd\CBox}{#1}}
 \newcommand{\dt}{\delta} 
 \renewcommand{\th}{\theta}
 \newcommand{\pd}{\partial} 
 \renewcommand{\H}{\mathcal{H}} 
 \newcommand{\Lra}{\Longrightarrow}
 \newcommand{\ggm}{\Gamma}
 \newcommand{\R}{{\mathbb{R}}} 
 \newcommand{\8}{\infty} 
 \renewcommand{\~}{\tilde}
\renewcommand{\H}{\mathcal{H}} 
\newcommand{\PP}{\mathcal{P}}
\newcommand{\lra}{\longrightarrow} 
\newcommand{\wra}{\rightharpoonup}
\renewcommand{\~}{\widetilde}
\newcommand{\rhu}{\rightharpoonup}
\newcommand{\wto}{\rhu}
\newcommand{\al}{\alpha}
 \newcommand{\bt}{\beta}
\newcommand{\sm}{\Sigma}
\renewcommand{\vartheta}{\Theta}
\newcommand{\RR}{{\R^2}}
\newcommand{\EE}{\mathcal{G}}
\newcommand{\GG}{\EE_\sigma}
\newcommand{\Om}{\Omega}
\newcommand{\TT}{{\mathbb{T}^2}}
\newcommand{\NN}{{\mathbb{N}}}
 \newtheorem{theorem}{Theorem}[section]
\newtheorem{lemma}[theorem]{Lemma}
\newtheorem{proposition}[theorem]{Proposition}
\newtheorem{corollary}[theorem]{Corollary}
\newtheorem{counter}{Counter}[section]
\newtheorem{lem}[counter]{Lemma}
\newtheorem{defn}[counter]{Definition}
\newtheorem{thm}[counter]{Theorem}
\newtheorem{remark}[counter]{Remark}
\newtheorem{cor}[counter]{Corollary}
\DeclareMathOperator{\diam}{diam}
\newcommand{\vep}{\varepsilon} 
\title{ 
Core shells and double bubbles \\ in a weighted nonlocal isoperimetric problem
 } 
\author{Stanley Alama
\thanks{Department of Mathematics and Statistics, McMaster University. E-mail: alama@mcmaster.ca} 
%
 \qquad Lia Bronsard 
 \thanks{Department of Mathematics and Statistics, McMaster University. E-mail: bronsard@mcmaster.ca} 
 \qquad Xinyang Lu
 \thanks{Department of Mathematical Sciences, Lakehead University. Email: xlu8@lakeheadu.ca}
 \qquad Chong Wang
 \thanks{Department of Mathematics, Washington and Lee University. Email: cwang@wlu.edu}
}
\begin{document}

\date{}
\maketitle

\begin{abstract}

We consider a sharp-interface model of $ABC$ triblock copolymers, for which the surface tension $\sigma_{ij}$ across the interface separating phase $i$ from phase $j$ may depend on the components.  We study global minimizers of the associated ternary  local isoperimetric problem in $\R^2$, and show how the geometry of minimizers changes with the surface tensions $\sigma_{ij}$, varying from symmetric double-bubbles for equal surface tensions, through asymmetric double bubbles, to core shells as the values of $\sigma_{ij}$ become more disparate.
Then we consider the effect of nonlocal interactions in a droplet scaling regime, in which vanishingly small particles of two phases are distributed in a sea of the third phase.  
We are particularly interested in a degenerate case of $\sigma_{ij}$ in which minimizers exhibit core shell geometry, as this phase configuration is expected on physical grounds in nonlocal ternary systems.

\medskip

\end{abstract} 

 \numberwithin{equation}{section}

\section{Introduction}

In this paper we continue our study of ternary systems, in which three constituents or phases interact through both short range attractive and long range repulsive forces.  A prominent example of such ternary systems are the $ABC$ triblock copolymers, linear chains of molecules consisting of three subchains, joined covalently to each other. A subchain of type $A$ monomer is connected to one of type $B$, which in turn is connected to another subchain of type $C$ monomer. Because of the repulsive forces between different types of monomers, different types of subchain tend to segregate. However, since subchains are chemically bonded in molecules, segregation can lead to a phase separation only at microscopic level, 
where $A, B$ and $C$-rich micro-domains emerge, forming morphological phases, many of which have been observed experimentally: see Figure \ref{csc}.

\begin{figure}[!htb]
\centering
 \includegraphics[width=3.8cm]{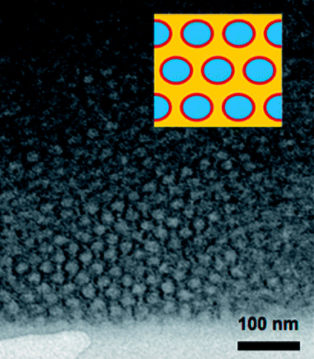} 
 \caption{Cross-sectional TEM image of PMMA-{\it b}-PVCa-{\it b}-PSt triblock copolymer thin flim. The inset image shows schematic illustration of Core-Shell Cylindrical phase (blue: Pst, red: PVCa, yellow: PMMA) \cite{ynn}.
Reproduced from the Royal Society of Chemistry 2018  \href{https://pubs.rsc.org/en/content/articlepdf/2018/ra/c8ra00630j}{(Link to this Open Access Article).
}
 }
 \label{csc}
\end{figure}

A triblock copolymer can be described as a stable critical point of an energy derived with Nakazawa and Ohta's density functional theory for triblock copolymers \cite{microphase, lameRW}.  This is a   diffuse interface model, a nonlocal version of the vector-valued Cahn-Hilliard energy.  In this paper we consider periodic configurations in two dimensions, so we pose our problem in the flat unit torus $\TT=\left[-\frac12,\frac12\right]^2$.
 The system is determined via a vector-valued density function $u = (u_1, u_2,u_0) \in L^1(\TT, \mathbb{R}^3)$, in which each scalar function  $u_i, i=0, 1, 2$, gives the density of one constituent of the mixture. The free energy of the system is

\begin{eqnarray} \label{energyuep}
\mathcal{E}^{\epsilon} (u) :=   \frac{1}{2}   \int_{\TT}  \left [ \frac{ \epsilon^2 }{  2 } |\nabla u |^2  + W(u) \right ] dx +
  \sum_{i,j = 1}^2  \frac{\epsilon \gamma_{ij}}{2} \int_{\TT} \int_{\TT} G_{}(x-y)\; u_i (x) \; u_j (y) dx dy.
\end{eqnarray}
$W$ is a triple-well potential which achieves minimum value $0$ at exactly three points: $\alpha_1 = (1,0,0), \alpha_2 = (0,1,0)$ and $\alpha_0 = (0,0,1)$, and $G$ is the Laplace Green's function on $\TT$, of mean zero.
The functional $\mathcal{E}^{\epsilon}$ is defined on
 \begin{eqnarray}
\left \{  u = (u_1,u_2, u_0)\, : \, u_i \in H^1(\TT; \mathbb{R}) , \ i=0,1,2;  \ \sum_{i=0}^2 u_i = 1; \  \frac{1}{|\TT|} \int_{\TT} u_i (x) dx = M_i, i = 1, 2 \right \}.
\end{eqnarray}
Using $\Gamma$-convergence \cite{blendCR,miniRW}, the variational structure of $\mathcal{E}^{\epsilon}$ is connected to minimization of an associated sharp-interface model,
\begin{equation} \label{energyu}
\mathcal{E} (u) :=  \sum_{0 \leq i < j \leq 2} \sigma_{ij}   \mathcal{H}^1 (\partial \Omega_i \cap \partial \Omega_j)+
  \sum_{i,j = 1}^2  \frac{ \gamma_{ij}}{2} \int_{\TT} \int_{\TT} G_{}(x-y)\; u_i (x) \; u_j (y) dx dy,
\end{equation}
where the densities $u_i$ are replaced by phase domains described by characteristic functions $u_i=\chi_{\Om_i}$ of sets $\Om_i$, $i=0,1,2$, of finite perimeter.

The constants $\sigma_{ij}$, $i\neq j$, represent {\it surface tension} along the interfaces separating the phase domains, and they are positive, material-dependent constants.  Their values are determined from the $\Gamma$-limit of the vector-valued Cahn-Hilliard part of $\mathcal{E}^{\epsilon}$ \cite{baldo,sternberg}, as a geodesic distance in a degenerate Riemannian metric induced on $\R^2$ by $\sqrt{W(u)}$, 
\begin{eqnarray} \label{sigij}
\sigma_{ij} = \inf \left \{   \sqrt{2}   \int_{0}^1  \sqrt{W (\zeta(t))} |\zeta^{\prime}(t)| dt: \zeta \in C^1 ([0,1]; \mathbb{R}^3), \zeta(0) = \alpha_i, \zeta(1) = \alpha_j \right \}.
\end{eqnarray}
Conversely, given positive constants $\sigma_{ij}=\sigma_{ji}$, $i\neq j$, one might want to engineer a potential $W$ for which $\sigma_{ij}$ give the surface tensions along each interface in $\mathcal{E}(u)$.  This may or may not be possible, as the definition \eqref{sigij} imposes a necessary condition for $\sigma_{ij}$ to arise as coefficients of the perimeter in $\mathcal{E}(u)$ in any $\Gamma$-limit of $\mathcal{E}^{\epsilon}$, in the form of a set of {\it triangle inequalities,}
\begin{equation}\label{triangle}
\sigma_{ij} \leq \sigma_{ik} + \sigma_{kj}, \quad 0 \leq i,j,k \leq 2.
\end{equation}

Our previous paper \cite{ablw} discusses the special case $\sigma_{01} = \sigma_{02} = \sigma_{12}= 1$, which arises (for instance) when $W$ is symmetric with respect to permutations of the $u_i$-axes.  In this paper we consider the case of unequal surface tensions $\sigma_{ij}$, in which the geometry of minimizers can be quite different.  Although it is not motivated by $\Gamma$-convergence, we also consider a case where the triangle inequalities \eqref{triangle} are violated,  $\sigma_{02} > \sigma_{01} + \sigma_{12}$.

When the surface tensions $\sigma_{ij}$ obey the triangle inequalities \eqref{triangle}, the variational problem \eqref{energyu} may conveniently be posed with characteristic functions lying in BV spaces,
\begin{eqnarray}
\left \{  u = (u_1,u_2, u_0) : u_i \in BV(\TT; \{0,1\}) ,  i=0,1,2;  \sum_{i=0}^2 u_i = 1;   \frac{1}{|\TT|} \int_{\TT} u_i (x) dx = M_i, i = 1, 2 \right \}.
\end{eqnarray}
Here $ BV(\TT; \{0,1\})$ is the space of functions of bounded variation that only take two values: $0$ and $1$. Thus, each $u_i = \chi_{\Omega_i}$ where $\Omega_1, \Omega_2, \Omega_0$ is a partition of $\TT$. The weighted perimeter is then re-expressed in terms of the total variations of the components $u_i$,
\begin{equation} \label{bvperim}
P_\sigma(\Om_1, \Om_2,\Om_0)=
  \sum_{0 \leq i < j \leq 2} \sigma_{ij}   \mathcal{H}^1 (\partial \Omega_i \cap \partial \Omega_j)
= \frac{1}{2} \sum_{i=0}^2 \beta_i \int_{\mathbb{T}^2} |\nabla u_i | , 
\end{equation}
with weights
\begin{equation}\label{betasigma}
\beta_1  = \sigma_{01} + \sigma_{12} - \sigma_{02},  \qquad 
\beta_2  = \sigma_{02} + \sigma_{12} - \sigma_{01},  \qquad 
\beta_0  = \sigma_{01} + \sigma_{02} - \sigma_{12}.
\end{equation}
When \eqref{triangle} holds, each $\beta_i\ge 0$.  Moreover, at most one of $\beta_i=0$, since we assume each $\sigma_{ij}>0$.  We note that since $u_0=1-u_1-u_2$, the state of the system is completely determined by the pair $(u_1,u_2)$, or indeed any pair of the triple $(u_1,u_2,u_0)$.
In particular, the weighted perimeter $P_\sigma$ is equivalent  to the standard BV norm on the cluster $(\Om_1,\Om_2,\Om_0)$, and thus is coercive and lower semicontinuous with respect to $L^1$ convergence in this case.
  The energy may then be expressed as
\begin{eqnarray} 
\mathcal{E} (u) =  \frac{1}{2} \sum_{i=0}^2 \beta_i \int_{\mathbb{T}^2} |\nabla u_i |+
  \sum_{i,j = 1}^2  \frac{ \gamma_{ij}}{2} \int_{\mathbb{T}^2} \int_{\mathbb{T}^2} G_{}(x-y)\; u_i (x) \; u_j (y) dx dy. \notag
\end{eqnarray}
Our goal in this paper is to characterize global minimizers of this functional, both in the absence of the nonlocal interaction (i.e., $\gamma_{ij}=0$) and in an appropriate ``droplet scale'' limit, in which minimizers form a very dilute lattice of particles of two minority phases in a sea of the third phase \cite{bi1,ABCT1,ablw}.

\subsection*{The local isoperimetric problem }

\begin{figure}[ht]
\centering
\includegraphics[scale=0.175]{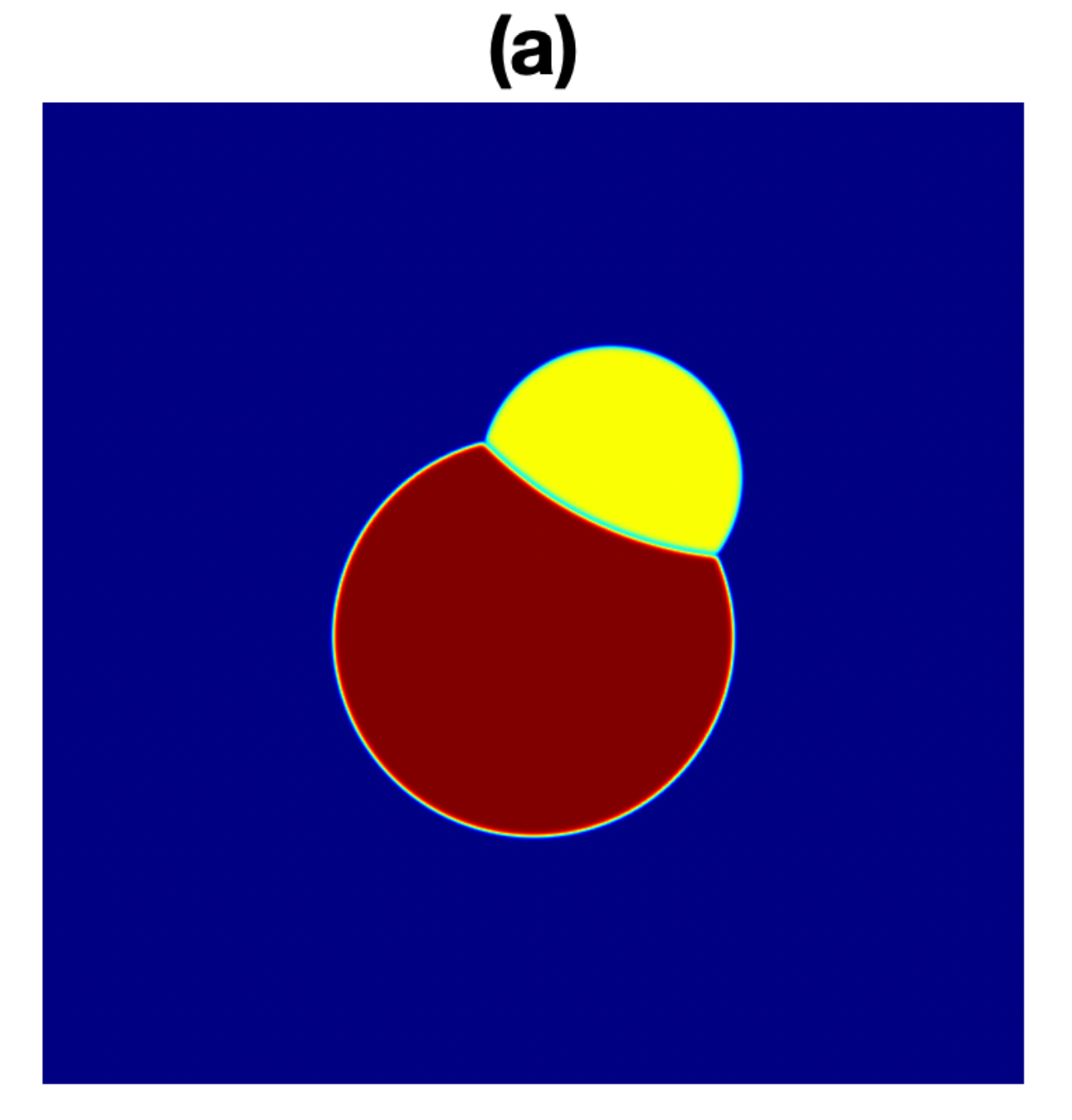}
\includegraphics[scale=0.175]{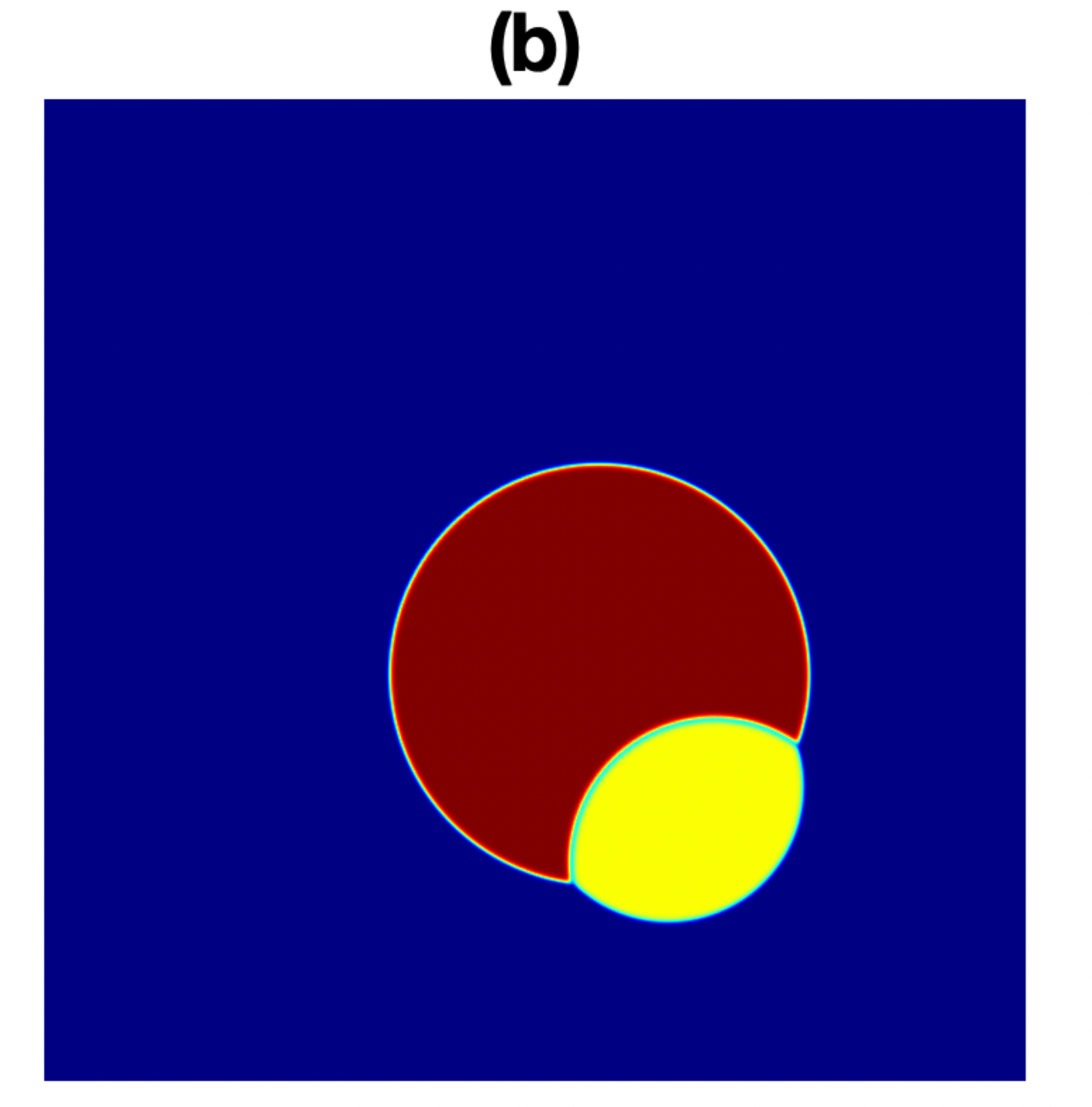}
\includegraphics[scale=0.175]{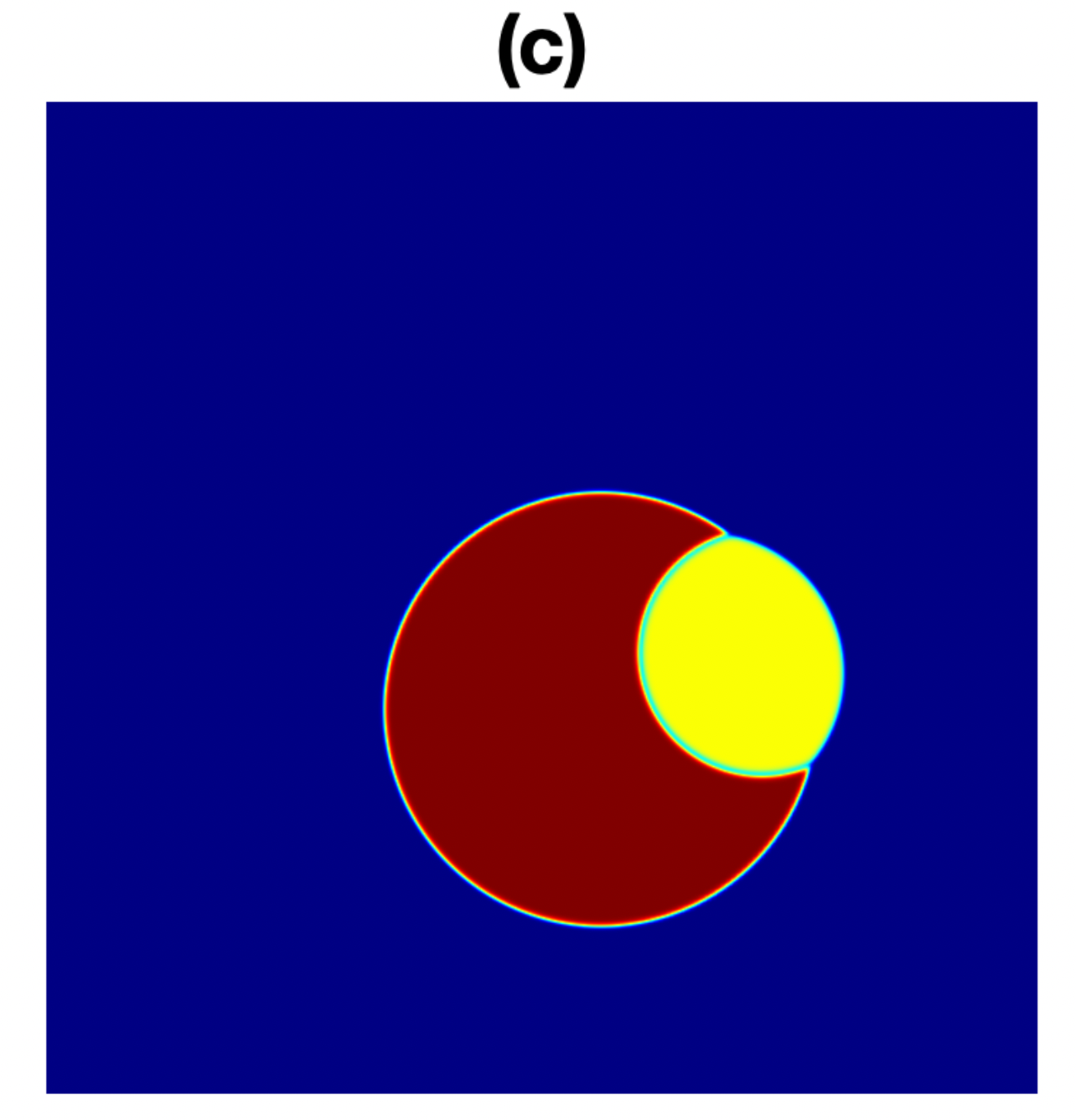}\\
\includegraphics[scale=0.18]{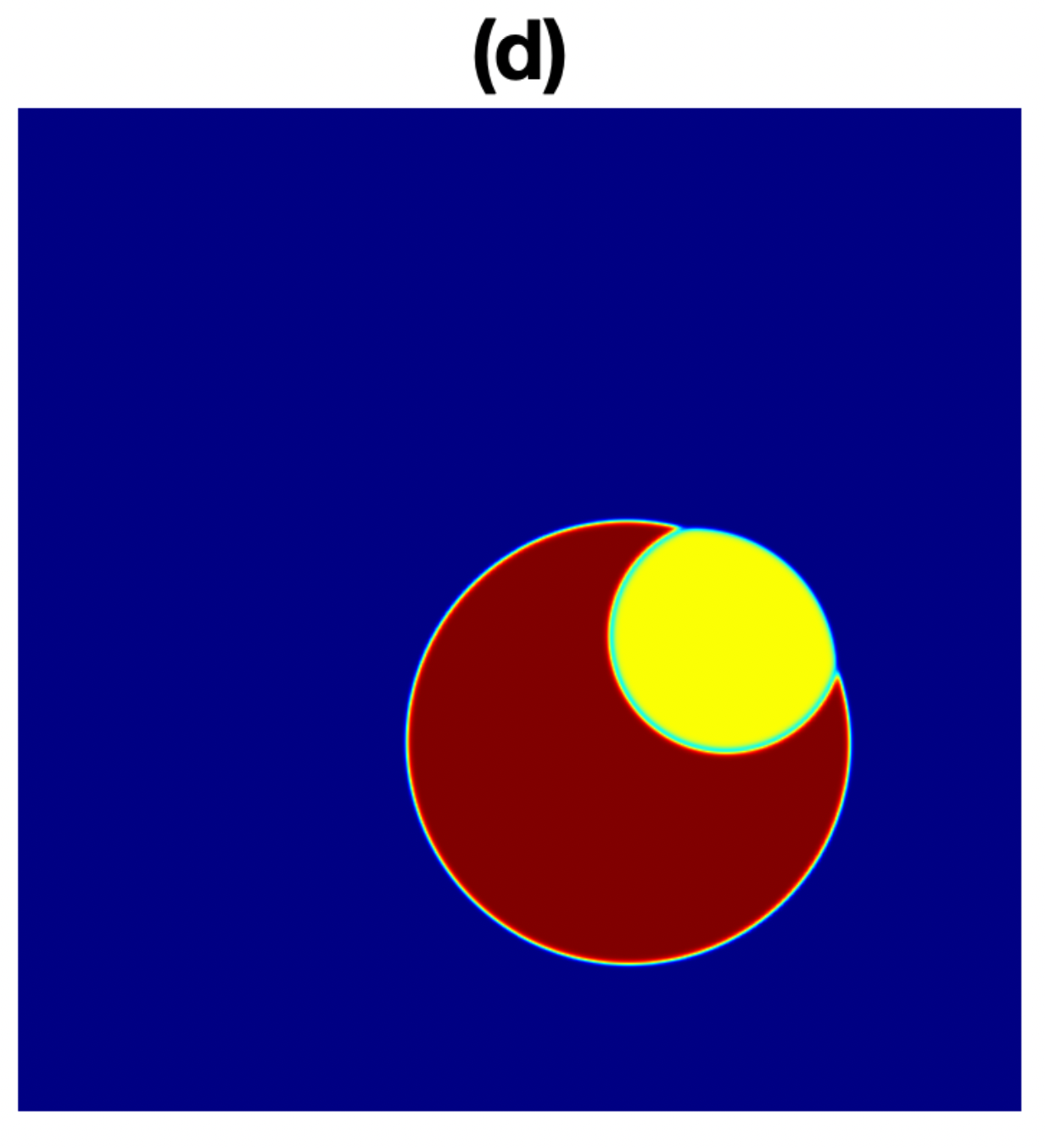}
\includegraphics[scale=0.18]{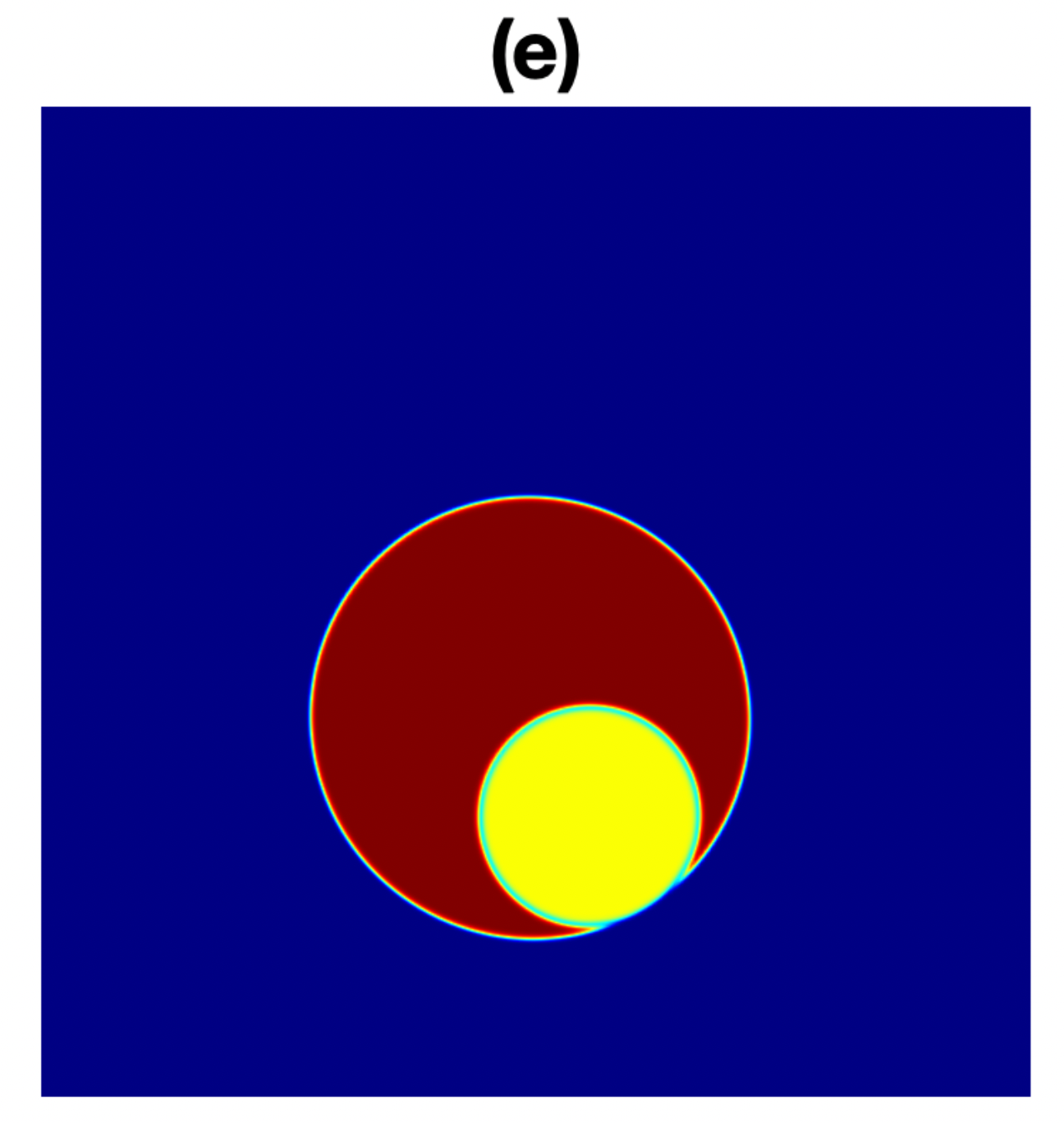}
\includegraphics[scale=0.18]{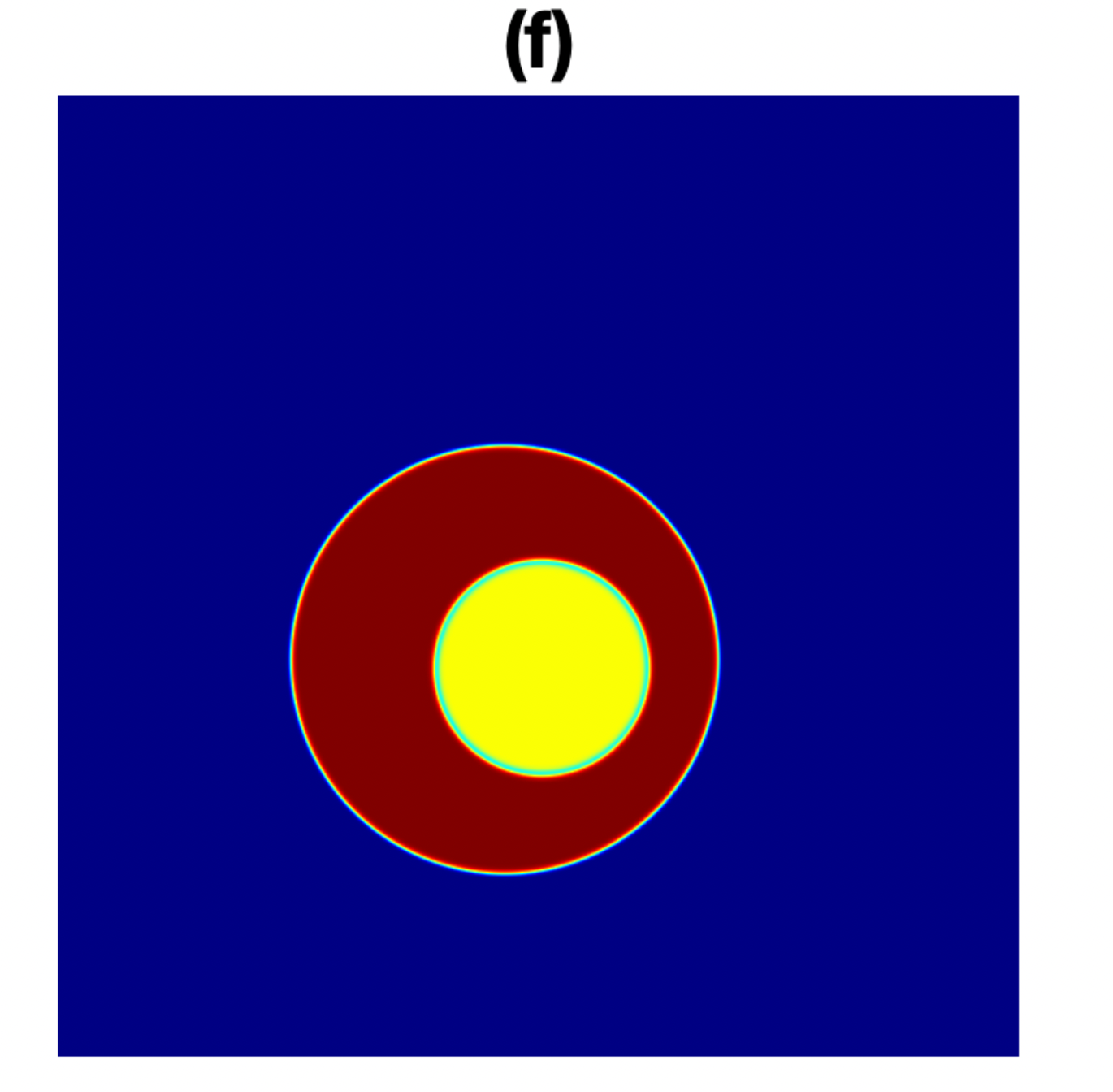}
\caption{ Numerical simulations: six stationary states in local ternary systems under different values of $\sigma_{02}$.
Each stationary state evolves from random initial data. 
(a) A standard double bubble with $\sigma_{02} = 1 $.
(b)-(d) Weighted double bubbles with $\sigma_{02} = 1.6, 1.8, 1.9 $ respectively.
(e) A tangential core shell with $\sigma_{02} = 2 $.
(f) A generalized core shell with $\sigma_{02} = 3 $.
For all six simulations, $\sigma_{01}=\sigma_{12}=1$, $M_1 = 0.12$, $M_2 = 0.04$.
} 
\label{evolves}
\end{figure}

\begin{figure}[ht]
\centering
\includegraphics[scale=0.18]{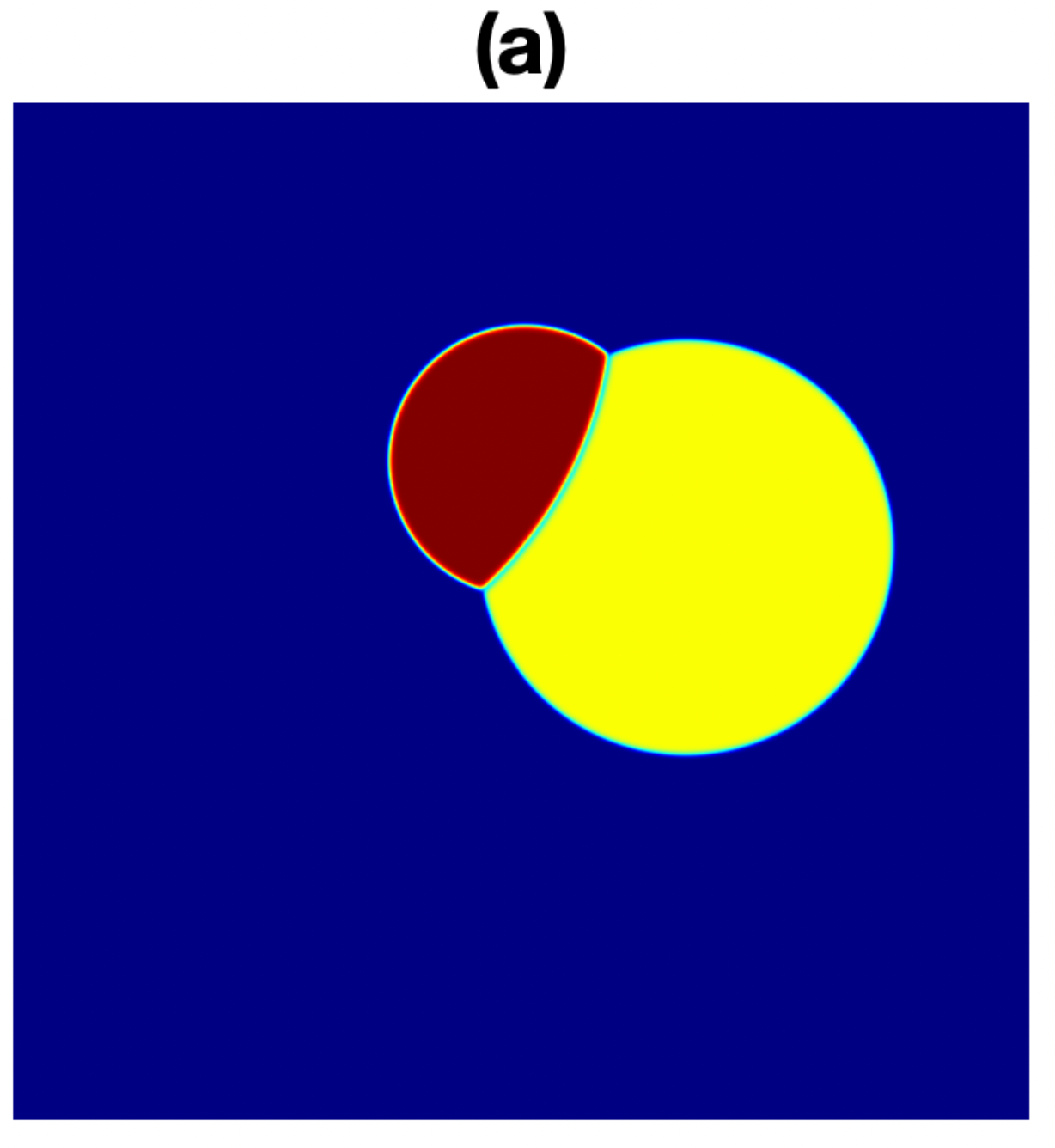}
\includegraphics[scale=0.18]{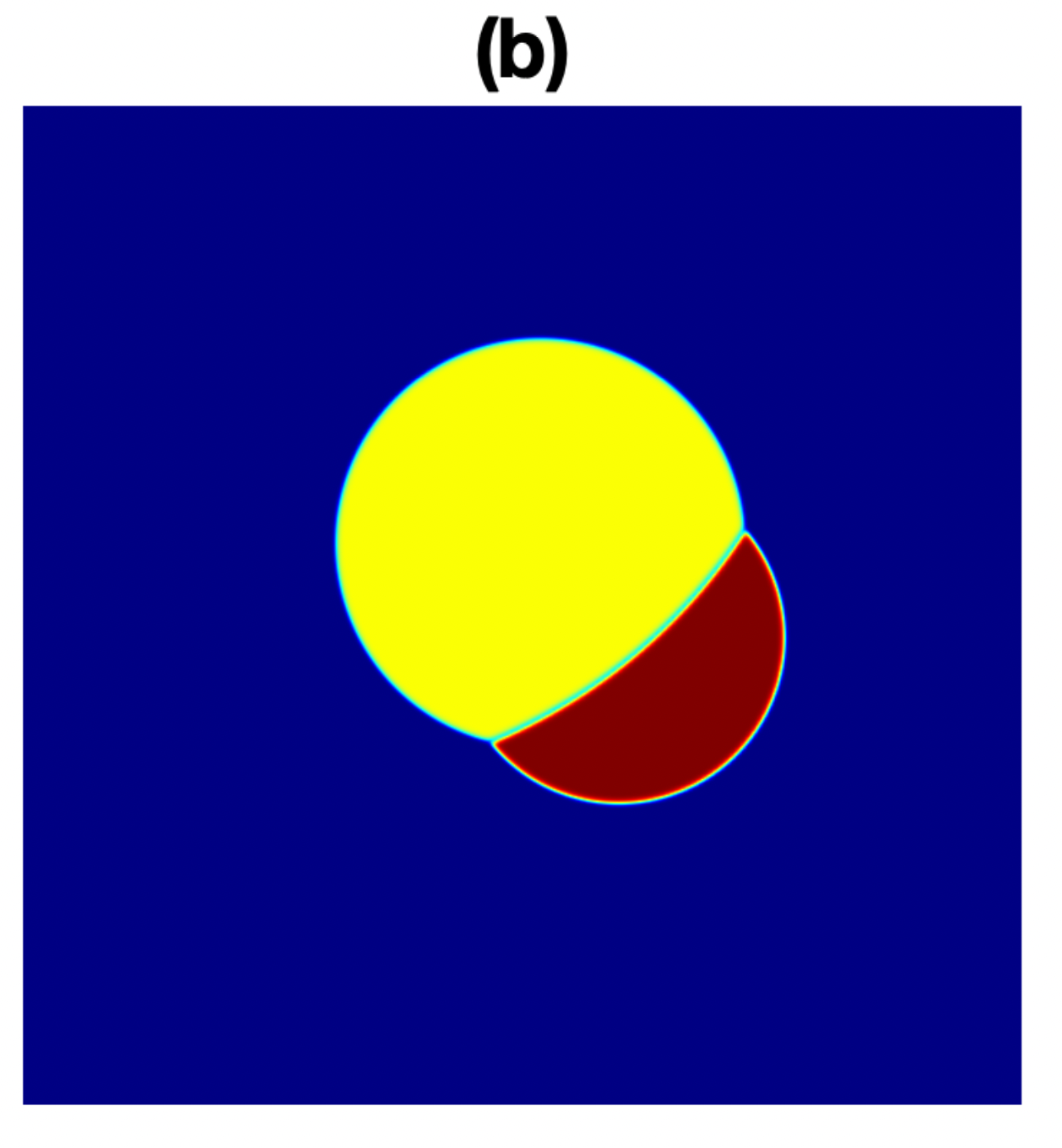}
\includegraphics[scale=0.18]{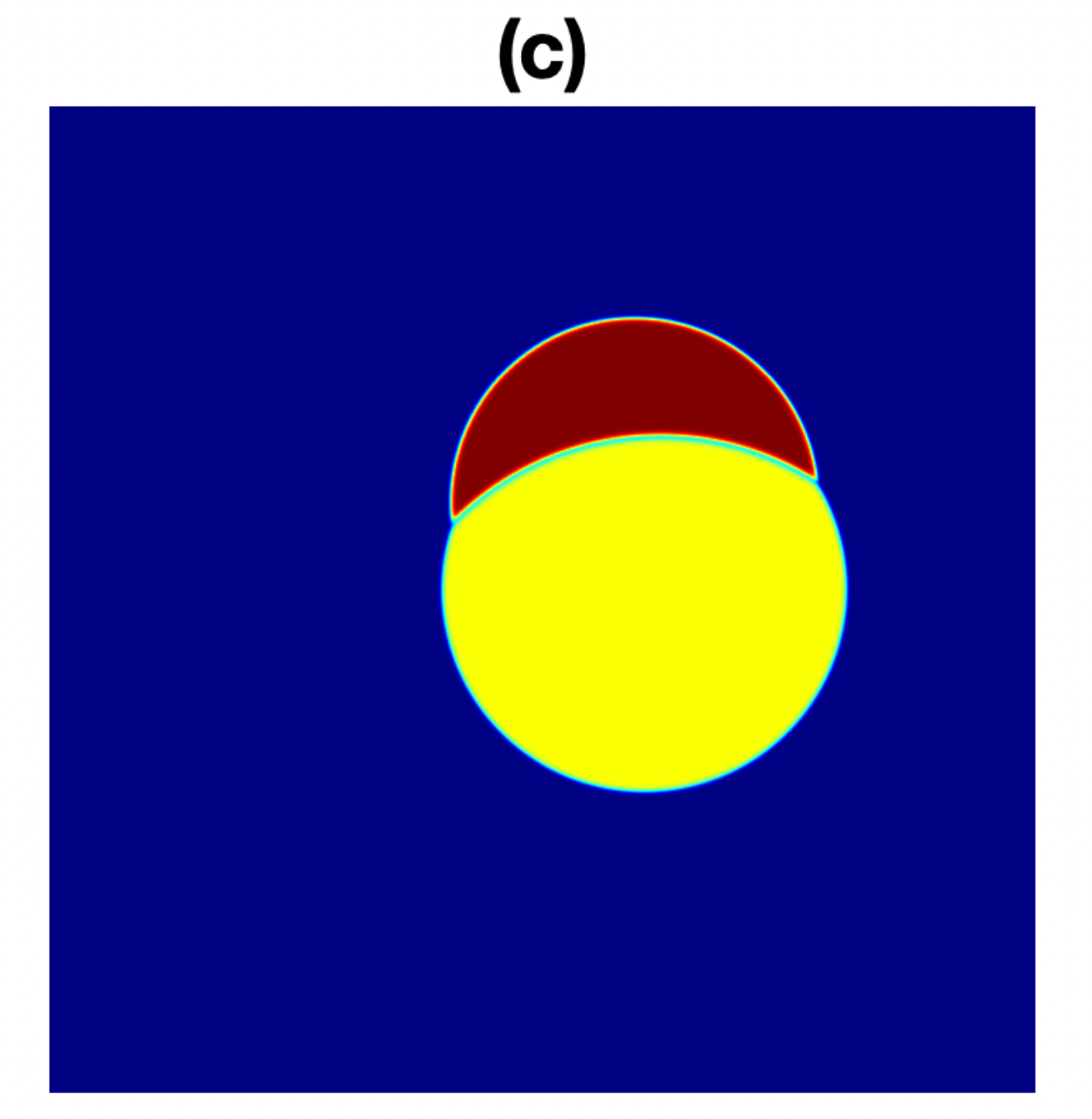}\\
\includegraphics[scale=0.18]{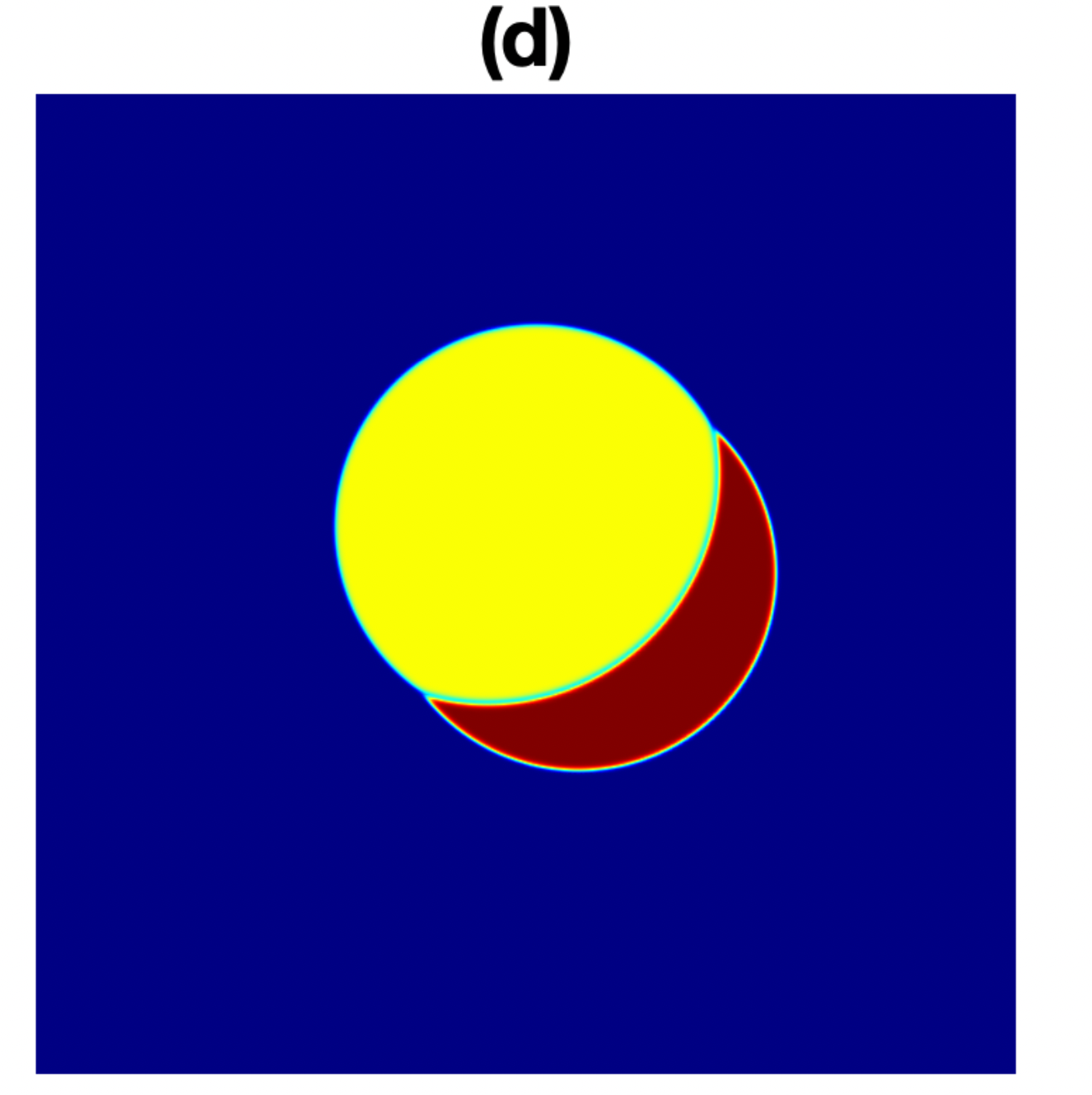}
\includegraphics[scale=0.18]{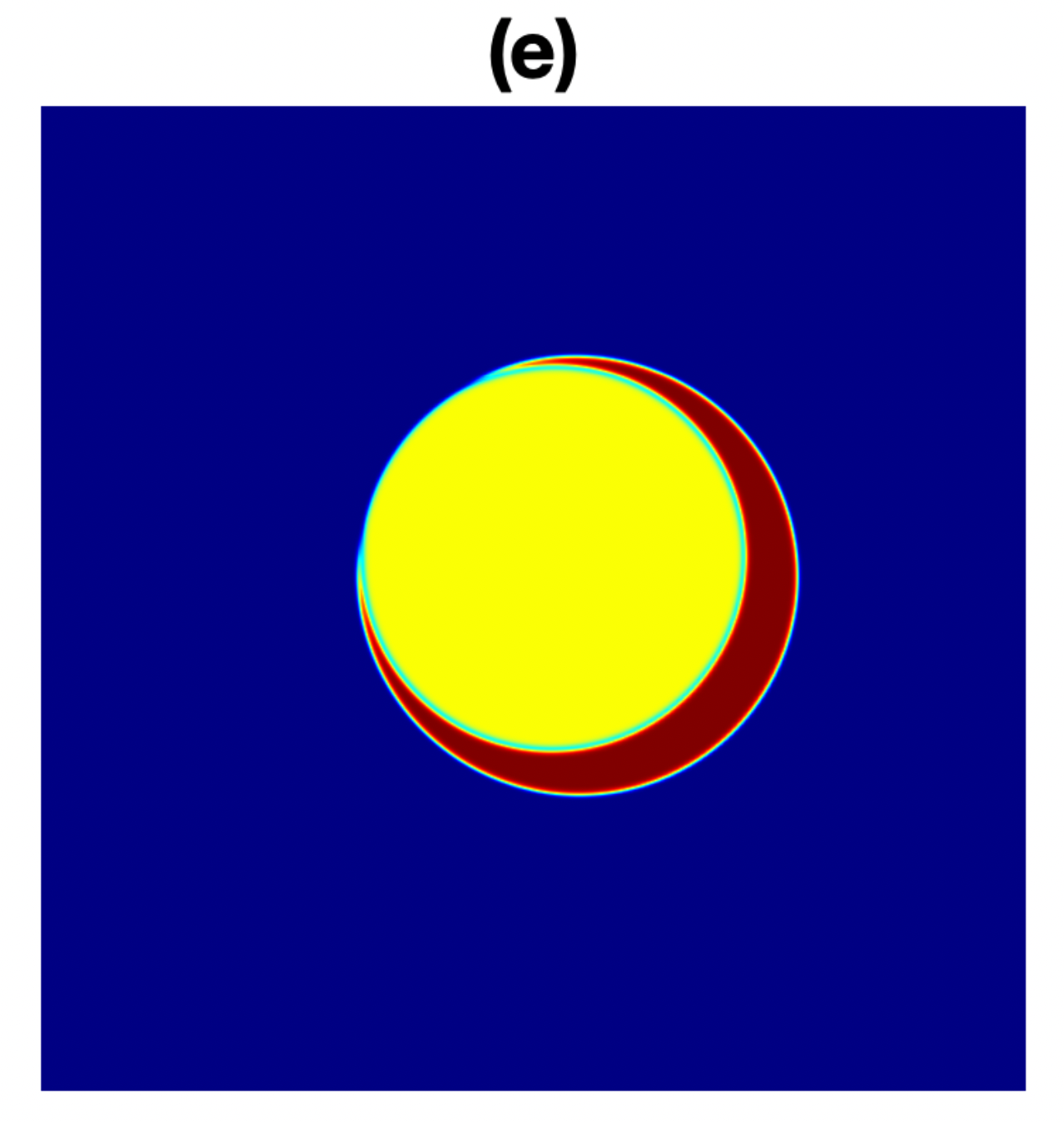}
\includegraphics[scale=0.18]{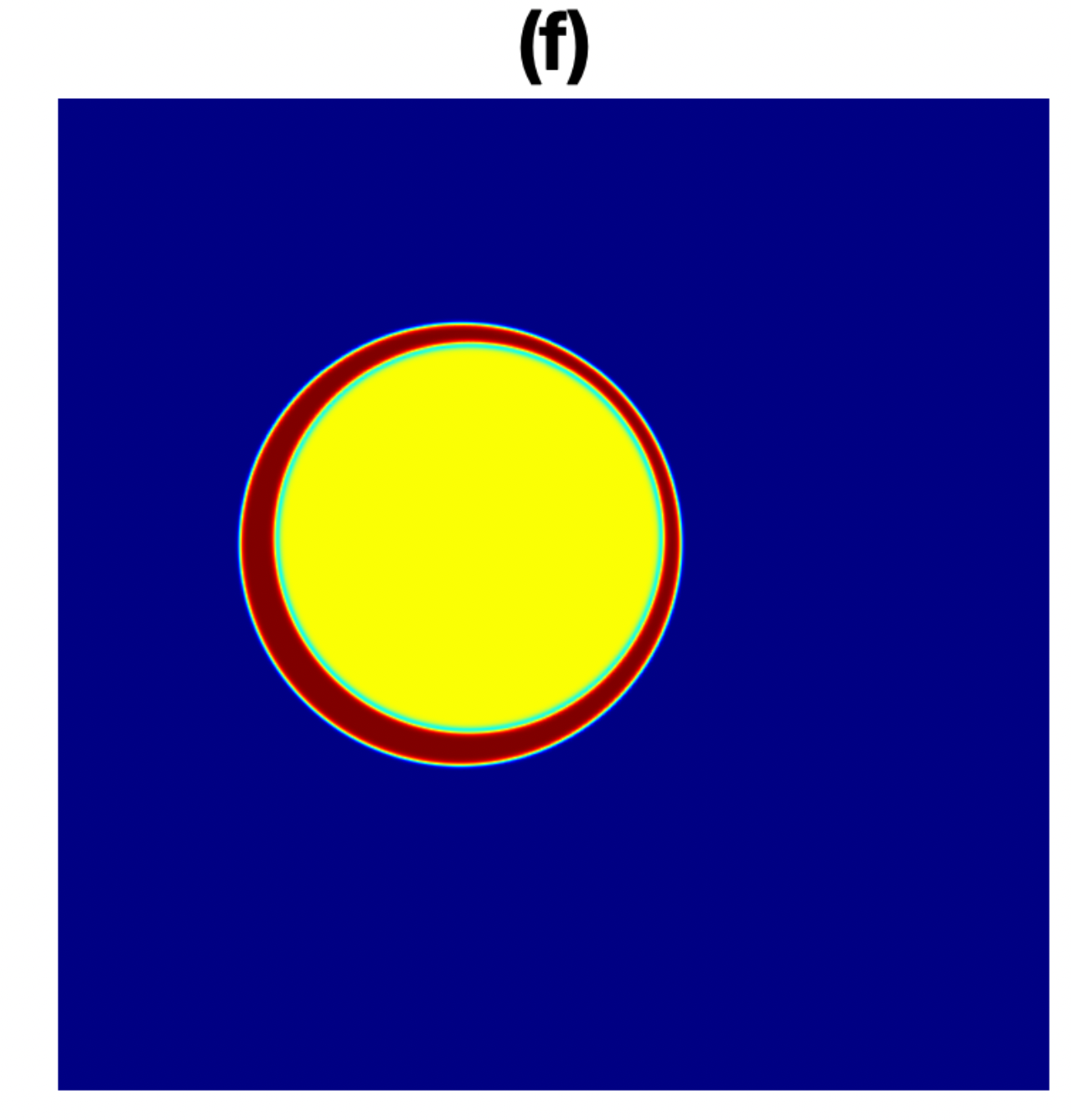}
\caption{Numerical simulations. Compared to Figure \ref{evolves}, here we reverse the values of $M_1$ and $M_2$, that is,   $M_1 = 0.04$, $M_2 = 0.12$. For all six simulations, $\sigma_{01}=\sigma_{12}=1$, and from (a) to (f), $\sigma_{02} = 1, 1.6, 1.8, 1.9, 2,$ and $3$, respectively. 
} 
\label{evolves2}
\end{figure}

In section 2 we consider minimizers of the local isoperimetric problem, minimizing the perimeter
among clusters
in $\R^2$ with given areas $M_1,M_2$, in the absence of the nonlocal term. 
We pose the problem in all of $\R^2$ both because it is a natural setting, and because it plays an important role in the global minimization of the nonlocal energy in the droplet regime, considered in section 4.
 The qualitative nature of minimizers depends strongly on the choice of $\sigma_{ij}$, and in particular on whether the triangle inequalities \eqref{triangle} hold {\it strictly} or not.  In case of strict triangle inequalities, isoperimetric sets with nonzero prescribed mass $M_1, M_2>0$ are {\it double bubbles} \cite{FABHZ,db1,db2,reichardt,lawlor,doubleAs,double}; see (a) to (d) in Figure \ref{evolves} and Figure \ref{evolves2} .  This is consistent with our result in our first paper, \cite{ablw}, in the unweighted case $\sigma_{ij}=  1 $, $i\neq j$, in which the isoperimetric sets are standard double bubbles with equal angles ${2\pi\over 3}$ at the triple junction points.  In the weighted case, the angles will be uniquely determined by the surface tensions via Young's law \cite{young,smith1,mullins1,www}.

The case of equality in one of \eqref{triangle} is also interesting, and yields a completely different geometry for minimizers.  If $\beta_1=0$, then we show that the optimal geometry is that of a {\it core shell,} $\mathcal{C}^{M_1}_{M_2}$, with an inner disk of area $M_2$ surrounded by an annular region of area $M_1$; see (e,f) in Figure \ref{evolves} and Figure \ref{evolves2}.  While this is a degenerate case, physically it is highly relevant as core shell constructions are often observed in nature.  Indeed, for an ABC triblock copolymer it seems more natural that minimizers form core shells than double bubbles, because of the linear structure of the blocks.  Since the bonding of the chains can only occur in linear order, one might expect that between regions of phase A and phase C there must be a phase B region, and that adjacency of phase A and C states should be energetically unfavorable.


This is indeed a degenerate setting, and the geometry of core shell minimizers is not uniquely determined by the perimeter alone, since the position of the interior disk in a core shell is arbitrary, from the point of view of the weighted perimeter alone.  The addition of the nonlocal term will resolve this degeneracy, as we will see in Proposition~\ref{concentric}.

For completeness, in numerical simulations, we also include the case $\beta_1 < 0$. In this case, the optimal geometry is also a core shell; see (f) in Figure \ref{evolves} and Figure \ref{evolves2}.

Finally, one may also consider the case $\beta_0=0$, in which case minimization prefers the separation of the cluster into disjoint single bubbles.  Again, this is a degenerate case, and after adding the nonlocal term, the relative position of the bubbles is not determined by the perimeter alone.  Indeed, we expect that the effect of the nonlocal repulsive term will be to push the two constituents far apart, as in the binary case.

\subsection*{Nonlocal effects}

It is well-known that the Green's function term in $\mathcal{E}(u)$ competes directly with the local isoperimetric term, in the sense that it is maximized by disks. In order to analyze its role in global minimizers we adopt the {\em droplet regime} scaling, introduced by Choksi-Peletier \cite{bi1}, a critical scaling in which both the isoperimetric and nonlocal terms in the energy act at the same energy scale.  This choice of material parameters (discussed in some detail in section 4) represents a dilute limit as the masses of phases $A$ and $B$ both tend to zero, but with correspondingly large interaction coefficients on the Green's function so that a rescaled characteristic function of each's phase domain behaves as Dirac delta measures in the limit.  
Introducing a small parameter $\eta>0$, which gives the length scale of a ``droplet'' of phase $A$ or $B$, we consider the rescaled energy 
\begin{equation}\label{Eeta1}  E_{\eta}^{} (v_{\eta}) 
  =
 \frac{\eta}{2} \sum_{i=0}^2  \beta_i \int_{\mathbb{T}^2} |\nabla v_{i,\eta} | +  \sum_{i,j = 1}^2  \  \frac{  \Gamma_{ij} }{2 |\log \eta| } \int_{\mathbb{T}^2} \int_{\mathbb{T}^2} G_{\mathbb{T}^2}(x - y)  v_{i, \eta}(x) v_{j, \eta}(y) dx dy,
\end{equation}
for $v_\eta = (v_{1,\eta},v_{2,\eta})= \eta^{-2}\chi_{\Om_\eta}\in [BV(\TT; \{0,\eta^{-2}\})]^2$ with prescribed masses, $\int_{\TT} v_{i,\eta} = M_i$, $i=1,2$. Here $\Gamma_{ij} = |\log \eta| \eta^3 \gamma_{ij}$; see Section \ref{nonlocalSec} for more details.

\begin{figure}[ht]
\centering
\includegraphics[scale=0.21]{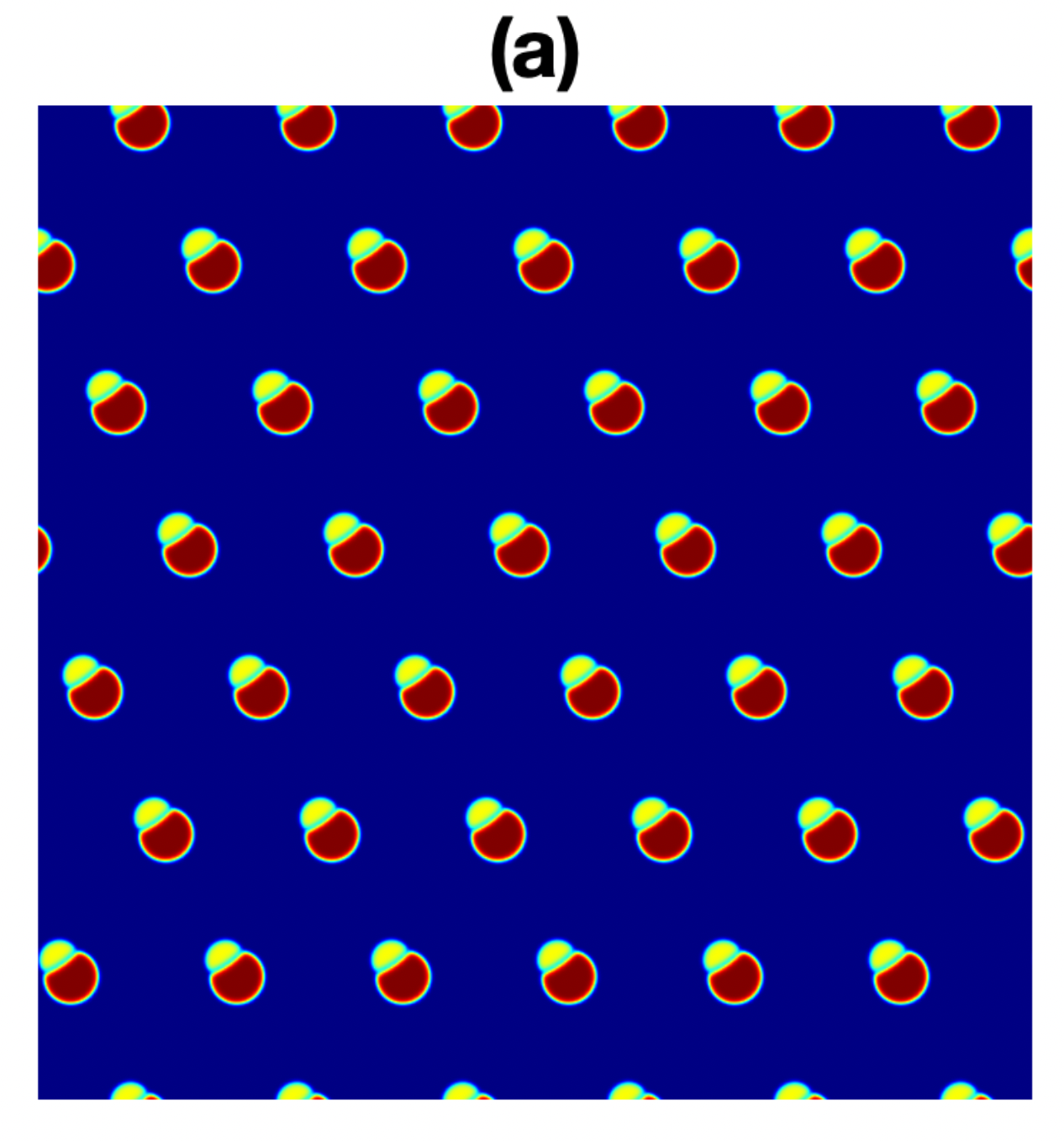}
\includegraphics[scale=0.21]{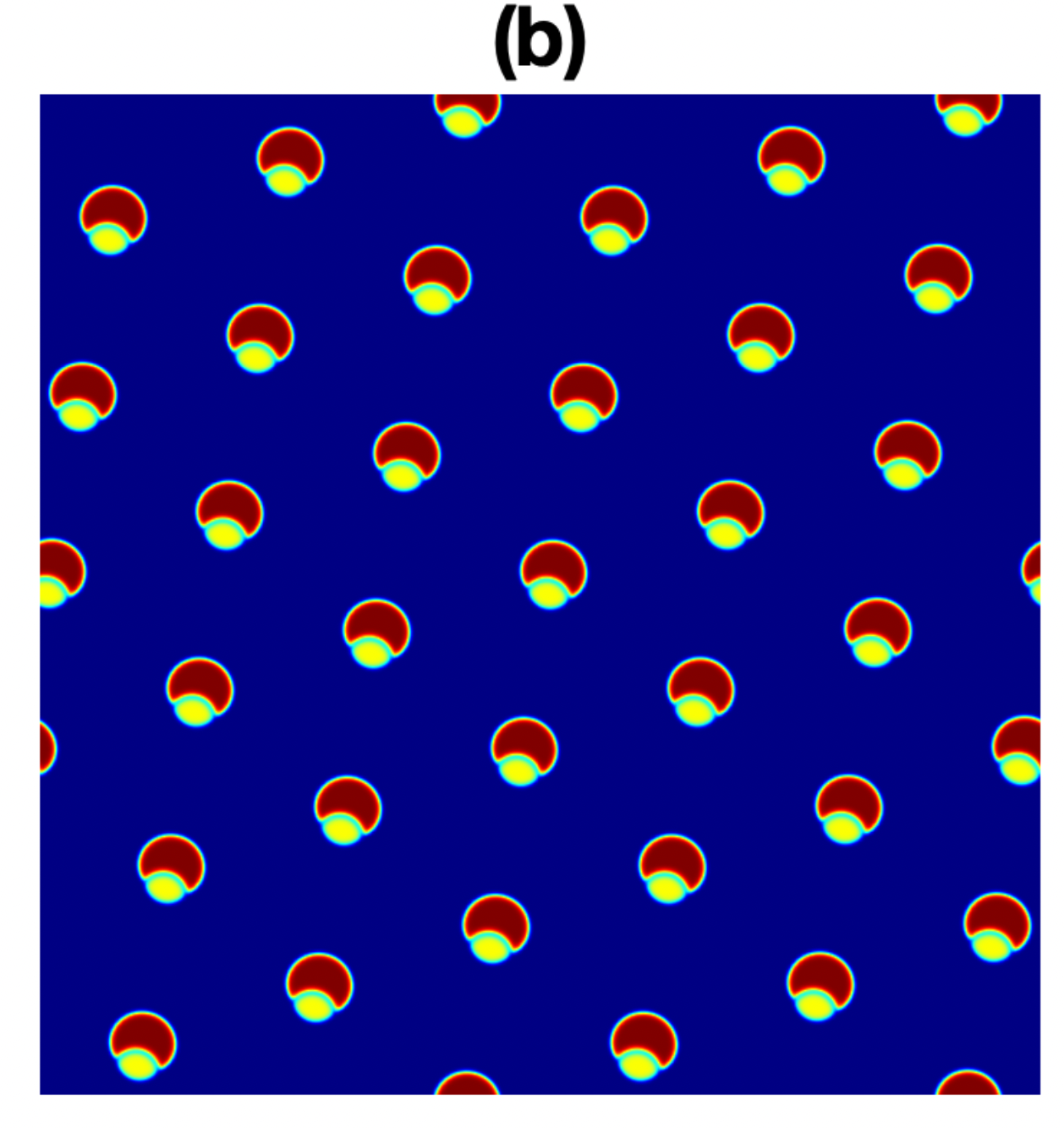}
\includegraphics[scale=0.21]{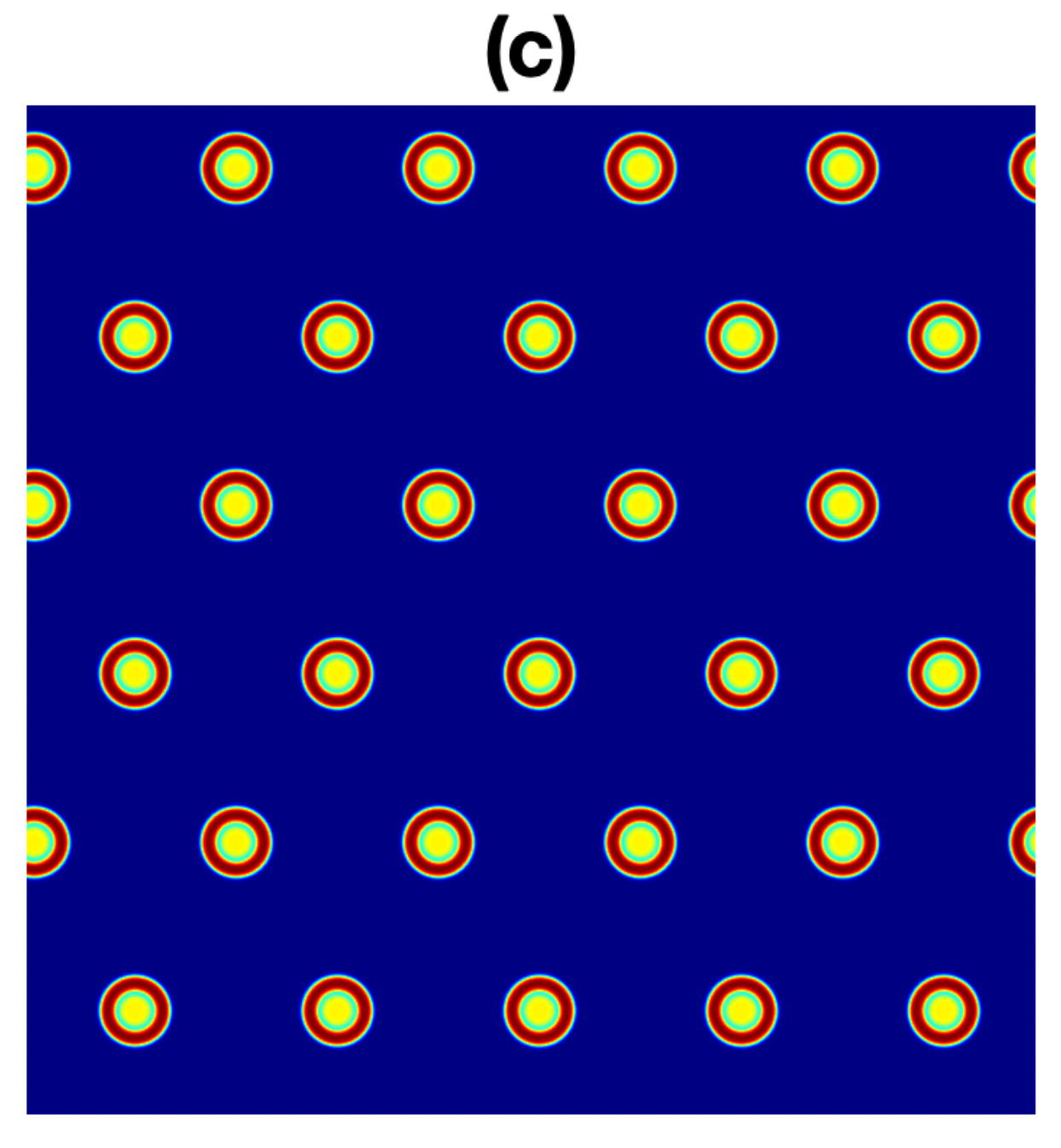}
\caption{ Numerical simulations:
three stationary states in nonlocal ternary systems under different values of $\sigma_{02}$.
Each stationary state evolves from random initial data. 
(a) Well-organized standard double bubbles when $\sigma_{02} = 1 $.
(b) Well-organized weighted double bubbles when $\sigma_{02} = 1.5 $.
(e) Well-organized core shells when $\sigma_{02} = 2 $.
For all three simulations, $\sigma_{01}=\sigma_{12}=1, M_1 = 0.10, M_2 = 0.05, \gamma_{11} = 16,000, \gamma_{12}=\gamma_{21}=0, \gamma_{22} = 54,000$. With changing $\sigma_{02}$, here we can see the transition from multiple double bubbles to multiple core shells. }
\label{evolves3}
\end{figure}

As in \cite{bi1,ablw}, finite energy configurations $E_\eta(v_\eta)\le C$ have a concentration structure (see Lemma~\ref{components}) by which $v_\eta$ splits into an at most countable collection of indecomposable clusters (in the sense of Maggi \cite[Chapter 29]{maggi}) with diameter $O(\eta)$.  For illustration, let's take this structure as an ansatz:  first, 
assume we have an at most countable collection of finite perimeter 2-clusters in $\R^2$, $\{A^k\}_{k\in\NN}$, where $A^k=(A^k_1,A^k_2)$, with $A^k_i\in\R^2$, $i=1,2$, and $\sum_{k=1}^\infty |A_i^k|=M_i$, i=1,2.  Then choose distinct points $\xi^k\in\TT$, and consider the configuration,
$$  \Om_\eta = \bigcup_{k\in\NN} (\eta A^k + \xi^k), \quad \text{and} \quad
   v_\eta = \eta^{-2}\chi_{\Om_\eta} .  $$
Substituting into $E_\eta$ yields:
\begin{align*}
E_\eta(v_\eta)
&=\sum_{k=1}^{\infty} \sum_{i=0}^2 \frac{\eta}{2} \beta_i \int_{\TT}|\nabla v_{i, \eta}^k |
  +{\Gamma_{ij} \over 2 |\log \eta|} \sum_{k,\ell=1}^{\infty} \sum_{i,j=1}^2
     \int_{\TT}\int_{\TT} v_{i, \eta}^k (x)\, G_{\mathbb{T}^2} (x-y)\, v_{j, \eta}^\ell(y)\, dx\, dy \\
&= \sum_{k=1}^{\infty}\left[ P_\sigma(A^k)  + \sum_{i,j=1}^2 {\Gamma_{ij}\over 4\pi} |A^k_{i}|\, |A^k_{j}| \right] + O(|\log\eta|^{-1})\\
&=\sum_{k=1}^{\infty}\GG(A^k) + O(|\log\eta|^{-1}),
\end{align*}  
with limiting energy of each component cluster $(A_1,A_2)$,
\begin{equation}\label{Gsigma}   \GG(A):= P_\sigma(A)  + \sum_{i,j=1}^2 {\Gamma_{ij}\over 4\pi} |A_{i}|\, |A_{j}|.
\end{equation}
The blow-up energy $\GG(A^k)$ is the weighted version of the energy of clusters studied in \cite{ablw}, and a ternary extension of the one analyzed in the binary case in \cite{bi1} in two dimensions.  The nonlocality of $E_\eta$ is expressed in the quadratic terms which depend on the mass:  the larger are $M_1, M_2$, the greater the need to split the phase domains into more and more droplets.  However, the nonlocal interaction does not affect the geometry of minimizers, which are studied in section 2 for the various choices of weights $\sigma_{ij}$.  
The deformation in the shape of double-bubbles, from the equal-angles case (with equal weights $\sigma_{ij}=  1 $) to cases of very different weight values, including well-organized weighted double bubbles and well-organized core shells, are illustrated in Figure~\ref{evolves3}.



As in the unweighted case (and the binary case,) we prove two  $\Gamma$-convergence results to describe the behavior of minimizers (or low energy states) of $E_\eta$.  The first limit 
 (Theorem~\ref{firstlimit}) describes the splitting of masses and the fine scale geometry of the indecomposable clusters as minimizers of $\GG$ in $\R^2$.  Let
 \begin{align} \label{ezero}
  e_0( m )&:=\min   \left\{   \GG( A ) \ | \  A =(A_1,A_2) \text{ 2-cluster, with $| A_i  |= m_i $, $i=1,2$}\right \},  \quad   \text{and} \\
  \label{ezerobar}
   \overline{e_0 }(M ) &:= \inf \left\{ \sum_{k=1}^{\infty} e_0 (m^k ) :  m^k = (m_1^k, m_2^k ),  \ m_i^k \geq 0,\ \sum_{k=1}^{\infty}  m_i^k = M_i, i = 1, 2 \right\}.
  \end{align}
Then, sequences of minimizers $v_\eta$ of $E_\eta$ with mass $M=(M_1,M_2)$ converge as measures 
$v_\eta\wto v_0=\sum_{k=1}^\infty m^k\delta_{x^k}$, where $x^k\in\TT$ and $ \{m^k\}_{k\in\NN}$ are determined by $\overline{e_0 }(M )$.
 
The second $\Gamma$-limit (Theorem~\ref{secondlimit}) exploits the remainder terms, of order $O(|\log\eta|^{-1})$ in the expansion of $E_\eta(v_\eta)$ above, to locate the centers $x^k$ of the droplets, and in the case of core shells, to determine the explicit geometry of the shells.  We recall from the discussion of the local isoperimetric problem above that when $\beta_1=0$ the perimeter term is degenerate, and the position of the interior disk is not determined by minimizing $P_\sigma$.  In Proposition~\ref{concentric} we show that the detailed geometry of core shells is determined at the $O(|\log\eta|^{-1})$ level, depending on the relative values of the interaction coefficients $\Gamma_{11}, \Gamma_{12}$.  When the repulsion between phases is weak, that is, $\Gamma_{12}$ is small, concentric core shells reduce the energy at second order,
while for stronger repulsion $\Gamma_{12}>\Gamma_{11}$ core shells' inner disks should be tangent to the exterior circle.  


\subsection*{Core shell assemblies}

Ren \& Wang have constructed critical points of $\mathcal{E}(u)$ representing dilute lattices of core shells \cite{stationary} and single bubbles \cite{disc}, and stationary solutions with double-bubble lattices were constructed by Ren \& Wei \cite{doubleAs}.  Numerical studies of periodic minimizers suggest 
that these assemblies do appear as minimizers of $\mathcal{E}(u)$ 
in many parameter regimes \cite{wrz}, and it is an interesting and challenging problem to verify this rigorously.  At least in the droplet regime limit described above, a first step is to study the limiting energy $\GG$, with given total mass $M=(M_1,M_2)$ and weights $\sigma$.  In our first paper \cite{ablw} we proved many properties of the minimizers of $\GG$ in the case of equal weights.  We showed that the number of droplets is finite, and the size of the constituent components is bounded in terms of the coefficients and masses.  We showed that, while a minimizer can exhibit both double-bubbles and single-bubbles, there can only be one species of single-bubble if there is coexistence of single- and double-bubbles.  

In section 3 we take up the same questions in case $\beta_1=0$, and the local isoperimetric problem favors core shell minimizers.  In Lemma~3.1 we show that there is a lower bound $m^-$ on the mass of any droplet constituent, whether single-bubble or component of a core shell, which depends only on the weights $\sigma_{ij}$, the coefficients $\Gamma_{ij}$, and the total masses $M_1,M_2$.  From this we may conclude (Corollary~\ref{finiteness}) that minimizing configurations of $\GG$ can have only finitely many nontrivial indecomposable components.  We note that the finiteness of components in the binary case studied by Choksi \& Peletier \cite{bi1} was proven using the concavity of the perimeter; as in our study of double-bubbles \cite{ablw}, the perimeter in the ternary case is not globally concave, and so more delicate arguments are required.

As was the case with double-bubbles, many open questions remain.  Numerical studies show regimes in which minimizers appear to have {\it only}  core shells, but we have no theorem which shows that this must be the case.  On the other hand, simulations also show that coexistence of core shells and single bubbles can occur when one chooses values of $M_i,\Gamma_{ij}$ which are very different from each other, so any result in this direction would have to take the ranges of values of the parameters into account. 

\subsection*{On clusters}

In this paper we use the framework of {\it clusters} of finite perimeter sets in $\R^2$, as set out in \cite[Part IV]{maggi}.  A 2-cluster in $\R^2$ is a disjoint pairing $A=(A_1,A_2)$ of finite perimeter sets, $|A_1\cap A_2|=0$, each of finite Lebesgue measure.  We write the mass as $m=(m_1,m_2)$, $m_i=|A_i|$, $i=1,2$.  It will be convenient for us to permit one of the chambers to be empty, so the case where one of $m_i=0$ is allowed.  The exterior domain $A_0=\R^2\setminus \overline{(A_1
\cup  A_2)}$ has infinite measure, but its perimeter is included in the total weighted perimeter of the cluster.  As $\widetilde A_0=\R^2\setminus A_0= \overline{( A_1 \cup  A_2)}$ has the same perimeter, it will often be convenient to replace $A_0$ by $\widetilde A_0$ in calculating the perimeter, that is:
$$ P_\sigma (A)= \frac{1}{2}  \left [\beta_1 P_{\R^2}(A_1) + \beta_2 P_{\R^2}(A_2)  +\beta_0 P_{\R^2}(\overline{(A_1  \cup A_2)}) \right ].  $$

The chambers of a cluster $A=(A_1,A_2)$ do not need to be connected, and indeed in studying the nonlocal problem we expect that they will split into disjoint components.  The proper measure theoretic definition is that of {\it indecomposability}:  a set $E$ is indecomposable if whenever $E=E_1\cup E_2$ with $|E_1\cap E_2|=0$ and $P_{\R^2}(E)=P_{\R^2}(E_1) + P_{\R^2}(E_2)$, then one of $|E_1|,|E_2|=0$.  (See \cite{ACMM01}.)

\subsection*{Numerical methods}

\begin{figure}[ht]
\centering
\includegraphics[scale=0.14]{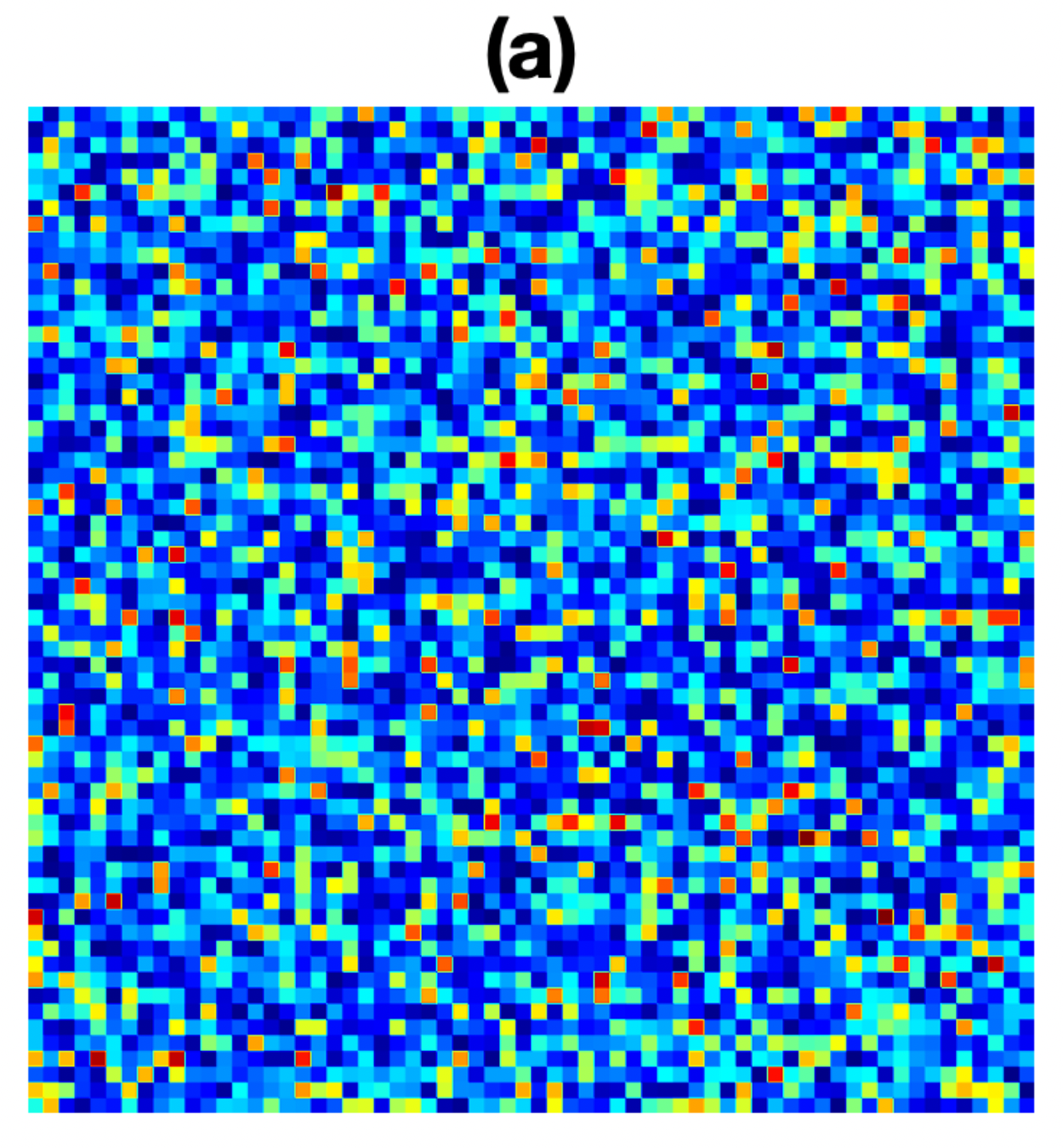}
\includegraphics[scale=0.14]{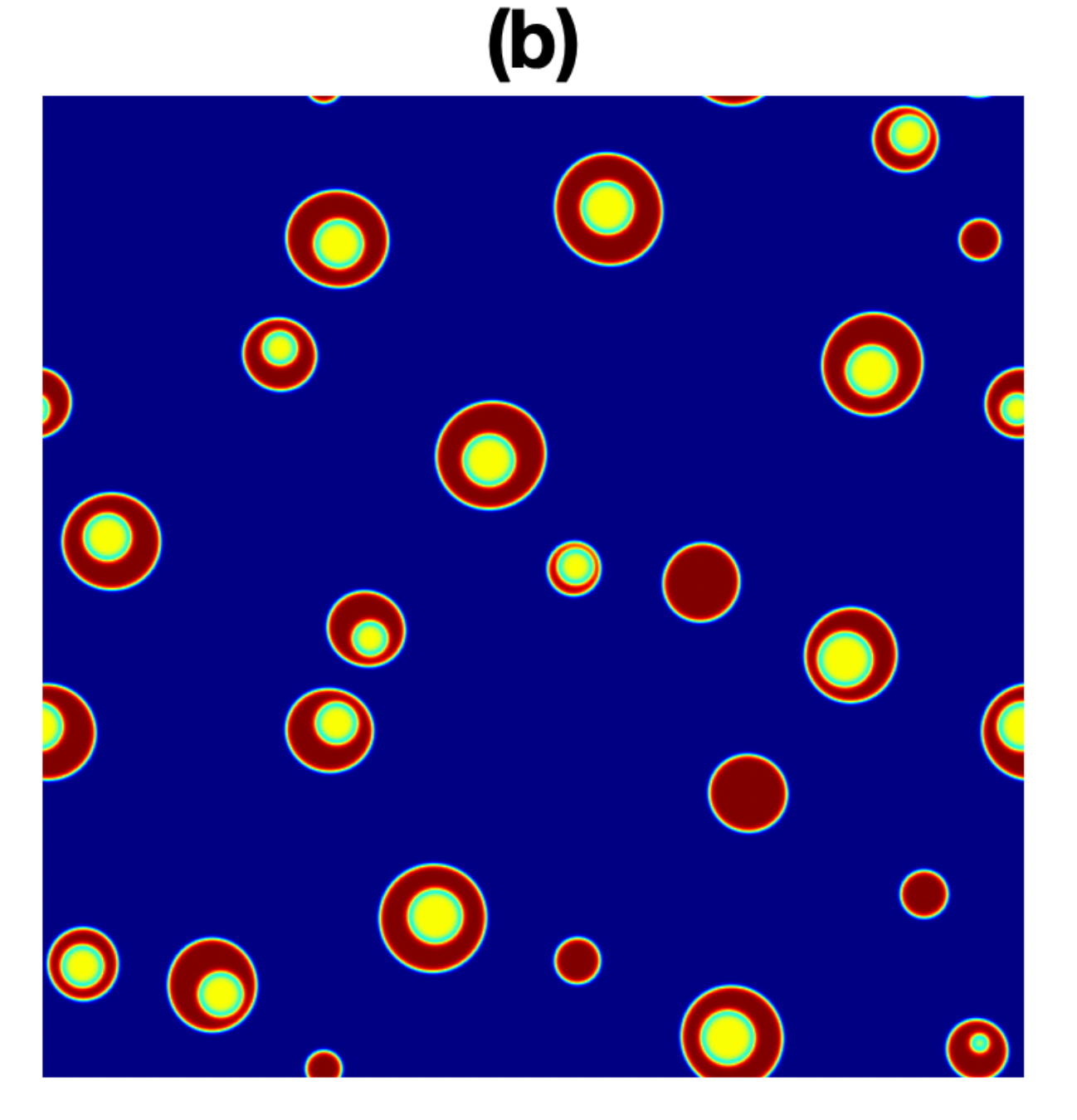}
\includegraphics[scale=0.14]{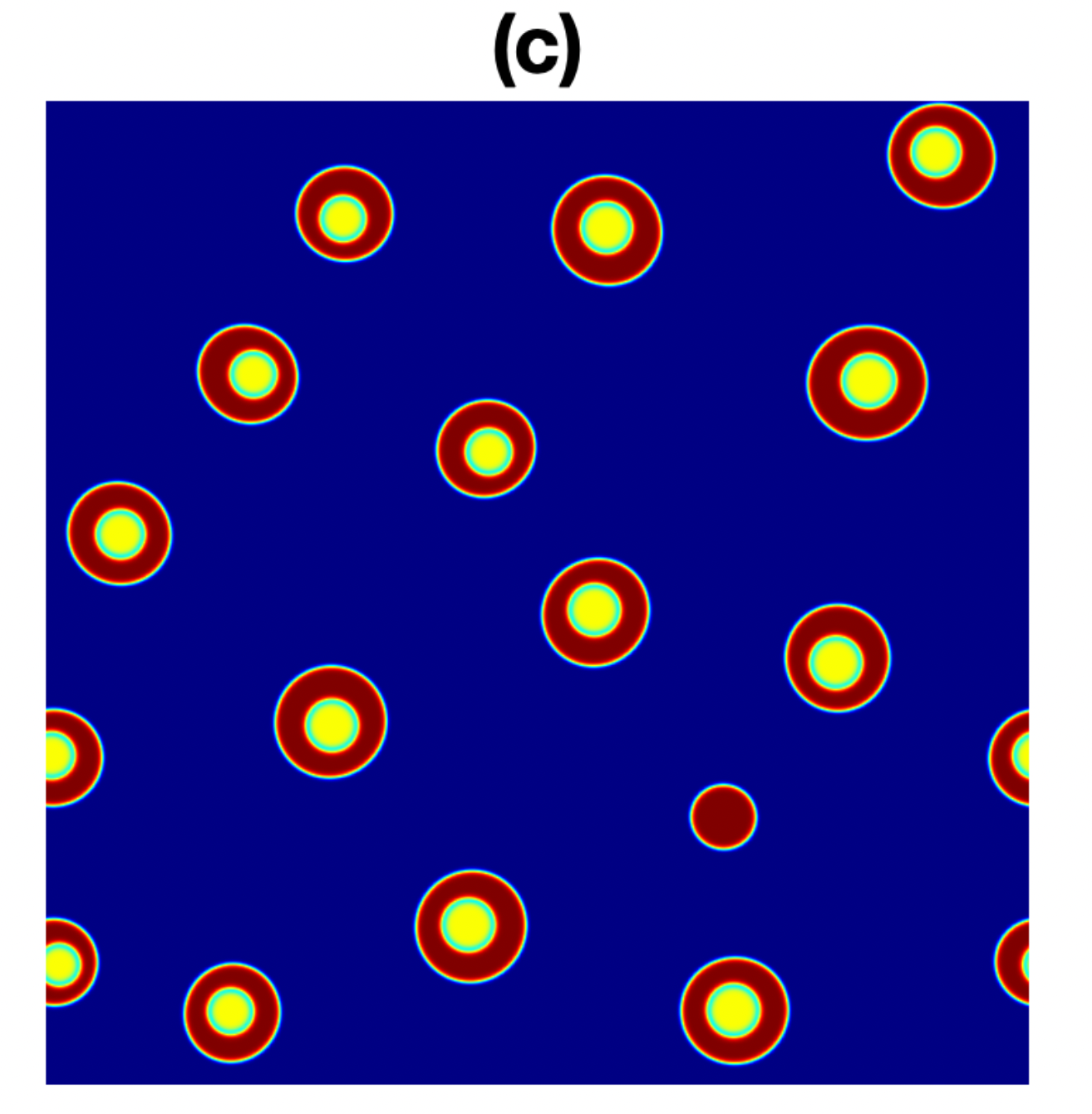}
\includegraphics[scale=0.14]{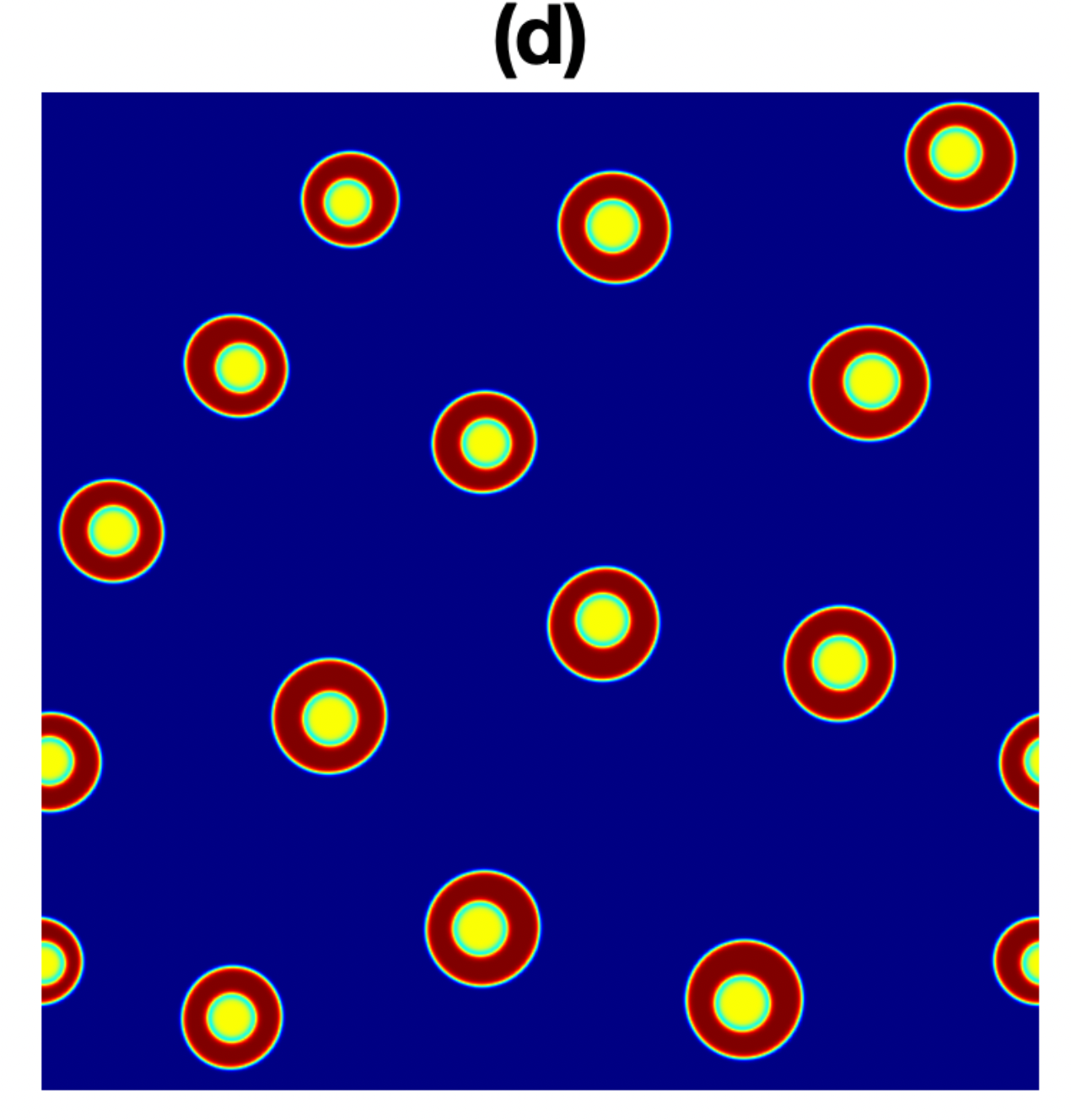}
\includegraphics[scale=0.14]{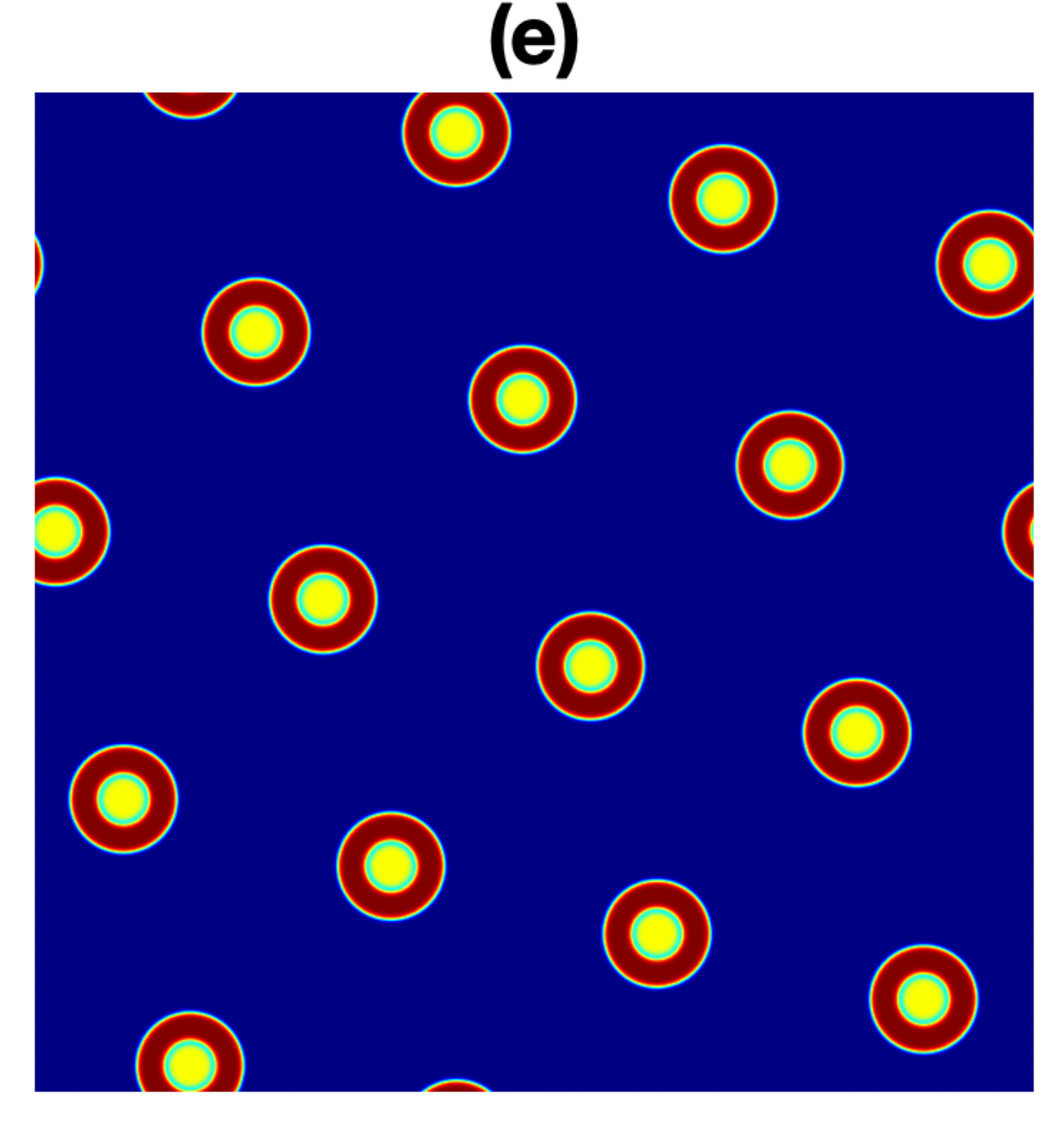}
\caption{ One sample numerical simulation. The system starts from random initial data and converges to a steady state of well-organized core shells. At the steady state, all core shells are concentric, of equal size, and distributed in a perfect hexagon pattern. Here $\sigma_{01}=\sigma_{12}=1$, $\sigma_{02} = 2$, $M_1 = 0.12$, $M_2 = 0.04$, $\gamma_{11} = 4,000$, $\gamma_{12}= \gamma_{21} = 0, \gamma_{22} = 20,000$. }
\label{evolves4}
\end{figure}

To minimize the free energy \eqref{energyuep}, we consider the $L^2$ gradient flow dynamics. Periodic boundary conditions are used here. To fulfill the mass constraints, we adopt a modified augmented Lagrange multiplier approach. The coupled nonlocal Allen-Cahn equations with mass constraints are first reformulated via a linear splitting scheme and then be solved efficiently by using the semi-implicit scheme to discretize the time variable and the spectral method to discretize the space variables.
The numerical simulations start from random initial configurations satisfying the mass constraints.

\subsection*{Acknowledgements}  The authors gratefully acknowledge the support of the Natural Science and Engineering Research Council (Canada) though the Discovery Grants program.



\section{The local isoperimetric problem}


In this section, we discuss among all the partitions of $\mathbb{R}^2$, the minimizers of the local part $P_{\sigma}$;
that is, minimizing
\begin{equation} \label{localE}
P_\sigma(A_1, A_2, A_0): =  \sum_{0 \leq i < j \leq 2} \sigma_{ij}   \mathcal{H}^1 (\partial A_i \cap \partial A_j),
\end{equation}
among
\begin{equation} \label{partitionOme}
\{ (A_1, A_2, A_0) :     | A_1 \cap A_2 | = 0;  | A_i| = m_i , i = 1,2;  A_0 = \R^2 \setminus  \overline{( A_1 \cup  A_2)}    \}.
\end{equation}
As noted above, since $A_0$ and $\widetilde A_0=\overline{A_1 \cup A_2} $ have the same boundary, the perimeter above may be expressed in terms of $A_1, A_2$ alone, and when convenient we may replace the exterior domain with the union $\overline{A_1\cup A_2}$.  In particular, we may re-express the total perimeter as
\begin{equation}\label{peri}
\PP_\sigma (A) = P_\sigma (A_1,A_2,\overline{A_1\cup A_2}). 
\end{equation}

In case the $\sigma_{ij}$ satisfy the triangle inequalites \eqref{triangle} we have the equivalent formulation of this isoperimetric problem in terms of BV characteristic functions, given in \eqref{bvperim}.  We are also interested in the case of $\sigma_{02}>\sigma_{01}+\sigma_{12}$, in which the representation \eqref{bvperim} is not coercive in the BV norm, and in this case a more careful treatment of the minimization problem is required.

The geometry of minimizers depends strongly on the surface tension values $\sigma_{ij}$, and we consider each case separately.

\subsection{{Pattern 1:  One Double bubble}}

When we assume that the triangle inequalities \eqref{triangle} hold with {\it strict} inequalities in each, minimizers of \eqref{localE} with $m_1\neq 0\neq m_2$ are {\bf double bubbles}.  This has been proven (in any dimension) by Lawlor  \cite{lawlor}.  This geometrical problem was already studied (in the context of grain boundaries) by  Mullins \& Smith  \cite{smith1,mullins1}. Minimizers consist of smooth circular arcs which meet at triple junctions.  The angle formed  at a triple junction must satisfy Young's Law (also known as a Herring Condition):  at each triple junction point, the normal vectors $n_{ij}$ to the arc separating phases $i$ and $j$ must satisfy the balancing condition,
$$   \sum_{i\neq j} \sigma_{ij}\, n_{ij} = 0.
$$
This is equivalent to
\begin{equation}\label{herring}
 {\sin\theta_1\over \sigma_{02}} = {\sin\theta_2\over\sigma_{01}} = {\sin\theta_0\over \sigma_{12}};
 \end{equation}
see Figure \ref{angles}.

\begin{figure}[ht]
\centering
\includegraphics[scale=0.25]{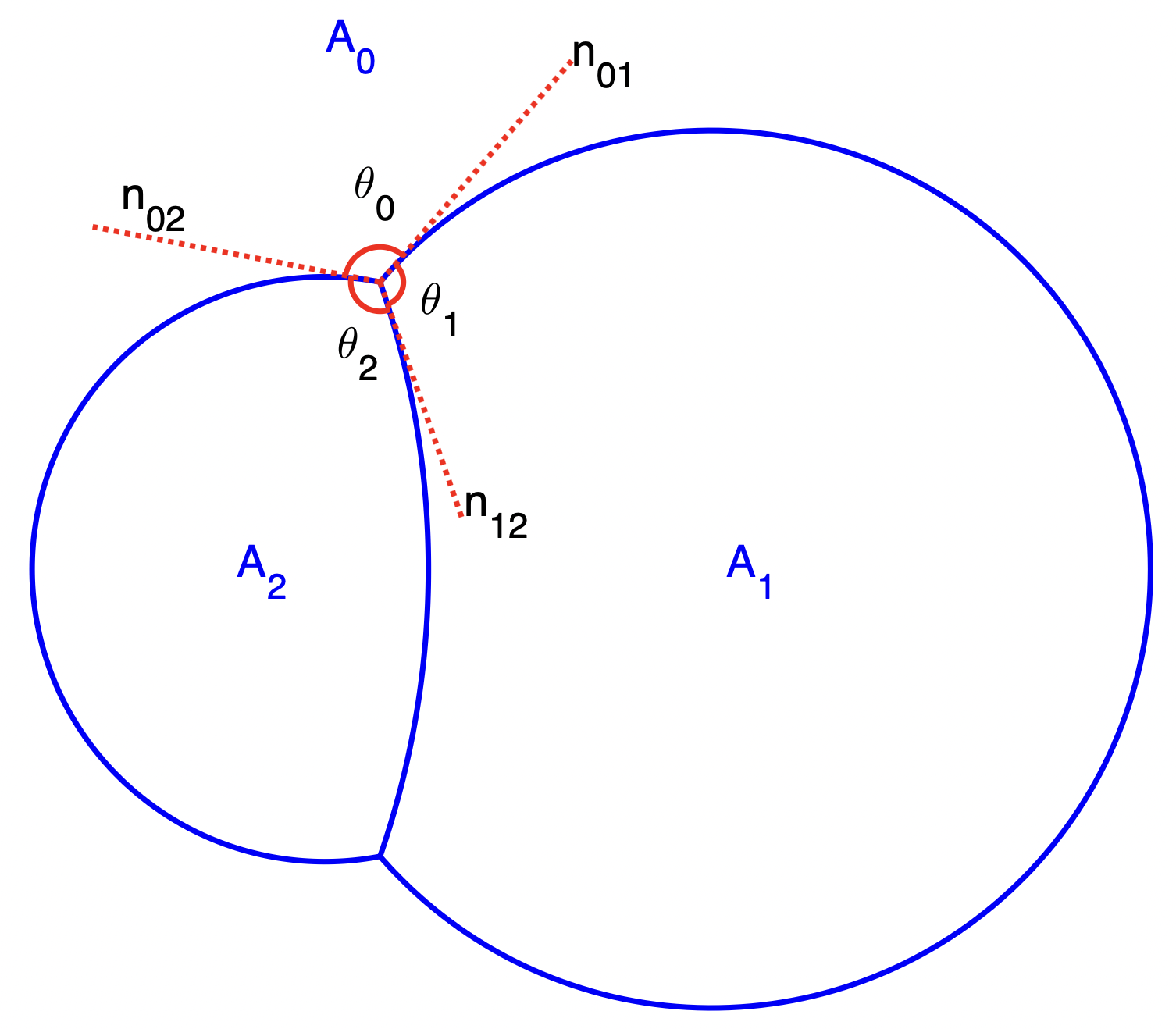}
\includegraphics[scale=0.25]{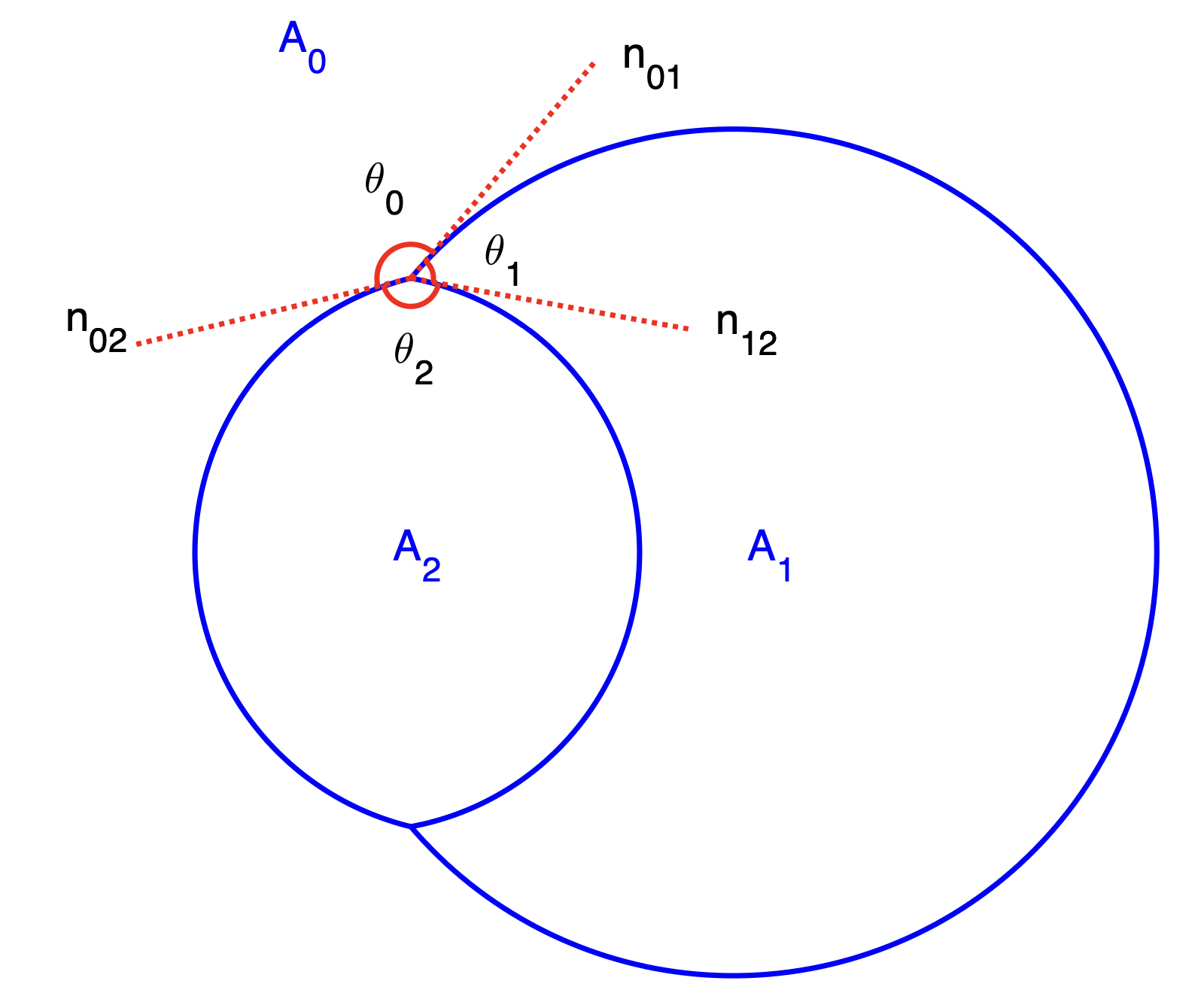}
\caption{ At triple junction points the angles $\theta_i$ between the normal vectors $n_{ki}$ and $n_{kj}$ are determined by the surface tensions $\sigma_{ij}$ via \eqref{herring}.  On the left, a symmetric double bubble, with equal values of $\sigma_{ij}$; on the right, $\sigma_{02}> \sigma_{12} = \sigma_{01}$ }. 
\label{angles}
\end{figure}

The special case of identical weights
 $\sigma_{01} = \sigma_{02} = \sigma_{12}=1$, has been studied in our previous paper \cite{ablw}.  In this case,
\begin{eqnarray}
 P_\sigma(A_{1},A_{2}, A_{0})  = \sum_{0 \leq i < j \leq 2}    \mathcal{H}^1 (\partial A_i \cap \partial A_j) = \frac{1}{2} \sum_{i=0}^2  P_{\mathbb{R}^2} (A_i),
\end{eqnarray}
which is a two component isoperimetric problem.  The standard double bubble satisfying the $120$ degree requirement at the two triple junction points (where three interfaces meet) is the unique solution to this isoperimetric problem \cite{FABHZ, db2, reichardt};
see (a) in Figure \ref{evolves} and Figure \ref{evolves2}. 

\subsection{{Pattern 2: One Core Shell}}

Core shell geometries can be expected in the degenerate case of the triangle inequality \eqref{triangle} where
\begin{equation}\label{nonstrict}
\sigma_{02}\ge\sigma_{01}+\sigma_{12}, 
\end{equation}
and that the other two inequalities in \eqref{triangle} hold strictly.  
With this hypothesis, the cost $\sigma_{02}$ of a transition from phase 2 to the background zero-phase is at least as large as a ``composite interface'' passing through phase 1, so intuitively we expect a shell structure to minimize perimeter.

\begin{defn}
	Given masses $m_1,m_2>0$, we define the class of generalized core shells, denoted $\mathcal{C}_{m_2}^{m_1}$, to consist of pairs $(A_1,A_2)$, $|A_i|=m_i$, $i=1,2$,  
	 with an inner disk $A_2$ of mass $m_2$, and an outer annulus $A_1$ of mass $m_1$. 
\end{defn}
That is, $A_2=B_{r_2}(p_2)$ and $A_1=B_{r_1}(p_1)\setminus B_{r_2}(p_2)$, with $r_2=\sqrt{M_2/\pi}$, $r_1=\sqrt{(M_1+M_2)/\pi}$, and centers $p_1,p_2\in\R^2$ chosen such that $A_2=B_{r_2}(p_2)\subset B_{r_1}(p_1)$.
Note that this definition does not require the two circles (the boundary of the disk and the outer boundary of the annulus) to be concentric; see (f) in Figure \ref{evolves} and \ref{evolves2}.
In addition, this definition permits a special case in which the two circles are tangent to each other; see (e) in Figure \ref{evolves} and \ref{evolves2}.  Indeed, the position of the inner circle $A_2$ does not affect the total perimeter of the configuration, and so the local isoperimetric problem cannot distinguish between these generalized core shells.


We expect the same geometry of minimizers in the more extreme case where $\sigma_{02}>\sigma_{01}+\sigma_{12}$.  In this situation we cannot rely on the equivalent formulation \eqref{energyu} in BV in order to ensure existence of a minimizer or lower semicontinuity, 
but we may still assert that core shell configurations $\mathcal{C}_{m_2}^{m_1}$ must have smaller perimeter than any other competitor.  Hence, we state our geometry result for both cases at once:

\begin{thm} \label{thm:coreshellexist}
	Let $\sigma_{ij}$ be given, such that 
	\[\sigma_{02}    \geq  \sigma_{01}  + \sigma_{12} , \] 
	and let $m_1,m_2>0$ be given.  Then the minimizer of $\PP_\sigma$ is a core shell, ie, of the class $\mathcal{C}_{m_2}^{m_1}$.
\end{thm}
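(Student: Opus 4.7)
The plan is to reduce the weighted perimeter to a sum of two ordinary perimeters to which the classical Euclidean isoperimetric inequality applies, and to recognize that the resulting lower bound is exactly the perimeter of a core shell. Let $L_{ij}:=\mathcal{H}^1(\partial A_i\cap\partial A_j)$, so that
\[
\mathcal{P}_\sigma(A)=\sigma_{01}L_{01}+\sigma_{02}L_{02}+\sigma_{12}L_{12},
\]
and note the elementary cluster identities (up to $\mathcal{H}^1$-null sets)
\[
P_{\R^2}(A_2)=L_{02}+L_{12},\qquad P_{\R^2}(\widetilde A_0)=L_{01}+L_{02},\qquad \widetilde A_0=\overline{A_1\cup A_2}.
\]

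Using the hypothesis $\sigma_{02}\ge\sigma_{01}+\sigma_{12}$, I replace the coefficient of $L_{02}$ and regroup:
\[
\mathcal{P}_\sigma(A)\;\ge\;\sigma_{01}(L_{01}+L_{02})+\sigma_{12}(L_{02}+L_{12})\;=\;\sigma_{01}P_{\R^2}(\widetilde A_0)+\sigma_{12}P_{\R^2}(A_2).
\]
Since $|A_2|=m_2$ and $|\widetilde A_0|=m_1+m_2$, the Euclidean isoperimetric inequality gives $P_{\R^2}(A_2)\ge 2\pi r_2$ with $r_2=\sqrt{m_2/\pi}$ and $P_{\R^2}(\widetilde A_0)\ge 2\pi R$ with $R=\sqrt{(m_1+m_2)/\pi}$. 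Combining,
\[
\mathcal{P}_\sigma(A)\;\ge\;2\pi\bigl(\sigma_{01}R+\sigma_{12}r_2\bigr).
\]
A direct computation shows that for any $(A_1,A_2)\in\mathcal{C}_{m_2}^{m_1}$ one has $L_{02}=0$ (or $L_{02}=0$ up to a single tangency point in the degenerate case), the inner boundary is a circle of length $2\pi r_2$, and the outer boundary is a circle of length $2\pi R$, so equality is achieved. Hence the right-hand side is both a lower bound for every admissible cluster and is attained on $\mathcal{C}_{m_2}^{m_1}$, which simultaneously proves existence of a minimizer and the bound.

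It remains to show any minimizer lies in $\mathcal{C}_{m_2}^{m_1}$. Equality in the two isoperimetric inequalities forces both $A_2$ and $\widetilde A_0$ to be disks (up to Lebesgue-null sets and translations), and equality in the first inequality, when $\sigma_{02}>\sigma_{01}+\sigma_{12}$, forces $L_{02}=0$, i.e., the disk $A_2$ lies strictly inside the disk $\widetilde A_0$ with $A_1=\widetilde A_0\setminus A_2$ an annular region. In the borderline case $\sigma_{02}=\sigma_{01}+\sigma_{12}$, the step that dropped $L_{02}$ is already an equality, so $L_{02}$ may be positive but must be supported on a set of $\mathcal{H}^1$-measure zero (two circles of different radii meet in at most one point if one contains the other), which is precisely the tangent generalized core shell. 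In either case $(A_1,A_2)\in\mathcal{C}_{m_2}^{m_1}$.

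The main obstacle is the subcase $\sigma_{02}>\sigma_{01}+\sigma_{12}$, where the BV formulation \eqref{bvperim} is no longer coercive and standard direct-method existence is not available; the argument above sidesteps this by exhibiting the minimizer explicitly through matching the lower bound. A secondary technical point is justifying the pairwise decomposition $P_{\R^2}(A_2)=L_{02}+L_{12}$ and $P_{\R^2}(\widetilde A_0)=L_{01}+L_{02}$ for general finite-perimeter clusters; this is standard (see \cite[Chapter 29]{maggi}) and uses that the reduced boundary of $A_2$ decomposes $\mathcal{H}^1$-a.e.\ according to the Lebesgue density of the neighbouring chamber, and similarly for $\widetilde A_0$.
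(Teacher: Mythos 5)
Your proof is correct, and for the strict-inequality case it takes a genuinely different and more economical route than the paper. In the borderline case $\sigma_{02}=\sigma_{01}+\sigma_{12}$ your argument coincides with the paper's: substitute (there, as an identity; here, as the equality case of your inequality), regroup into $\sigma_{01}P_{\R^2}(\overline{A_1\cup A_2})+\sigma_{12}P_{\R^2}(A_2)$, and apply the isoperimetric inequality twice. The difference is in the case $\sigma_{02}>\sigma_{01}+\sigma_{12}$: the paper first establishes (Lemma~\ref{not lower semi-continuous}) that $P_\sigma$ fails to be lower semicontinuous, then proves the separate and rather technical Lemma~\ref{lem:omega2}, which assumes a minimizer exists and shows it must satisfy $\H^1(\pd A_0\cap\pd A_2)=0$ by hollowing out a ball from $A_1$ and wrapping a thin insulating layer of phase~$1$ around the $2$--$0$ interface (with a delicate piecewise-linear construction to control the length of the new boundary). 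Your single observation that $\sigma_{02}L_{02}\ge(\sigma_{01}+\sigma_{12})L_{02}$ turns the whole problem into a pointwise lower bound on the functional that is attained exactly on $\mathcal{C}^{m_1}_{m_2}$; this yields existence and the characterization of minimizers simultaneously, and renders the lower-semicontinuity obstruction moot since no direct method is invoked. What the paper's longer route buys is the perturbation construction of Lemma~\ref{lem:omega2} itself, which is reused later (e.g.\ in Section~4 when describing the limiting clusters) and carries quantitative information about non-minimizing competitors; but for Theorem~\ref{thm:coreshellexist} as stated, your argument is complete, provided you keep the (standard, and also implicitly used by the paper) caveat that the interfaces $L_{ij}$ are understood via reduced boundaries so that $P_{\R^2}(A_2)=L_{02}+L_{12}$ and $P_{\R^2}(\overline{A_1\cup A_2})=L_{01}+L_{02}$ hold $\H^1$-almost everywhere.
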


\medskip

In the case of equality in \eqref{triangle}, 
\begin{equation}\label{equalitysigma} \sigma_{02}  = \sigma_{01}  + \sigma_{12}, 
\end{equation}
the proof is straightforward. So we start with the proof of this case.

\begin{proof}[Proof of Theorem \ref{thm:coreshellexist}, case of equality in \eqref{nonstrict}]
Applying \eqref{equalitysigma} to \eqref{localE}, for any admissible cluster $A_1, A_2, A_0$, 
\begin{eqnarray} 
P_\sigma(A_{1},A_{2}, A_{0}) &=&   \sigma_{01}  \left[  \mathcal{H}^1 (\partial A_0 \cap \partial A_1) +  \mathcal{H}^1 (\partial A_0 \cap \partial A_2) \right ] +
 \sigma_{12}  \left[  \mathcal{H}^1 (\partial A_1 \cap \partial A_2) +  \mathcal{H}^1 (\partial A_0 \cap \partial A_2) \right ]  \notag \\
&= &  \sigma_{01} P_{\R^2} (A_1 \cup A_2) +   \sigma_{12} P_{\R^2} ( A_2 ). \notag
\end{eqnarray} 
Since the area of $A_1 \cup A_2$ is fixed, that is, $| A_1 \cup A_2 | = m_1 + m_2$, by the isoperimetric inequality, $P_{\R^2} (A_1 \cup A_2)$ is minimized when $\partial  (A_1 \cup A_2) $ is a circle. Similarly, the area of $A_2$ is fixed, $P_{\R^2} ( A_2 )$ is minimized when $\partial  A_2 $ is a circle.  Since $A_2\subset A_1\cup A_2$, we obtain a core shell, of the form $\mathcal{C}_{m_2}^{m_1}$.  That is, given any admissible cluster $(A_1,A_2,A_0)$ with masses $m_1,m_2$, its weighted total perimeter is bounded below by that of a core shell in $\mathcal{C}_{m_2}^{m_1}$; {\it a posteriori} a minimizer exists, which is a core shell.  Moreover, the inequality will be strict in case either $A_1\cup A_2$ or $A_2$ are not disks, and so the class of minimizers is exactly $\mathcal{C}_{m_2}^{m_1}$.
\end{proof}

\medskip

%

When the inequality \eqref{nonstrict} is strict the situation is more delicate, as the weighted perimeter functional is no longer lower semicontinuous.  
We need some preliminary results.

\begin{lem}\label{not lower semi-continuous}
Given $\sigma_{ij}$, with $\sigma_{02}>  \sigma_{01}  + \sigma_{12}$,
then we can
construct explicit sequences
$u_{i,n}=\mathbf{1}_{A_{i,n}}$, $i=0,1,2$,  converging to some $u_i =\mathbf{1}_{A_{i}} $ in the strong $L^1$ topology, such that
\[\lim_{n\to+\infty} P_\sigma(A_{1,n},A_{2,n},A_{0,n} )
<P_\sigma(A_{1},A_{2},A_{0} ).\]
\end{lem}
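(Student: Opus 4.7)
The key intuition is that when $\sigma_{02} > \sigma_{01} + \sigma_{12}$, any stretch of interface between phase $2$ and phase $0$ can be approximated by inserting a thin sliver of phase $1$ between them: this replaces a unit length of cost $\sigma_{02}$ by two nearby interfaces (phase $2$-to-$1$ and phase $1$-to-$0$) of combined cost $\sigma_{01}+\sigma_{12}<\sigma_{02}$. Because the sliver can be made arbitrarily thin, its area tends to zero and so $u_{1,n}\to 0$ in $L^1$, while the $0$-$2$ interface in the limit reappears and is charged with the full cost $\sigma_{02}$.

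Concretely, the plan is to take the limiting cluster to be
\[
A_1=\emptyset, \qquad A_2=B_1(0), \qquad A_0=\R^2\setminus\overline{B_1(0)},
\]
so that $P_\sigma(A_1,A_2,A_0)=2\pi\sigma_{02}$. For the approximating sequence I would set
\[
A_{2,n}=B_1(0), \qquad A_{1,n}=B_{1+1/n}(0)\setminus \overline{B_1(0)}, \qquad A_{0,n}=\R^2\setminus\overline{B_{1+1/n}(0)}.
\]
Then only $0$-$1$ and $1$-$2$ interfaces are present, and
\[
P_\sigma(A_{1,n},A_{2,n},A_{0,n}) = 2\pi\,\sigma_{12} + 2\pi\bigl(1+\tfrac{1}{n}\bigr)\sigma_{01} \;\longrightarrow\; 2\pi(\sigma_{01}+\sigma_{12}),
\]
which is strictly less than $2\pi\sigma_{02}$ by hypothesis.

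It remains to check strong $L^1$ convergence of the characteristic functions. Since $A_{2,n}=A_2$ identically, $u_{2,n}\equiv u_2$. The symmetric differences $A_{1,n}\triangle A_1=A_{1,n}$ and $A_{0,n}\triangle A_0 = B_{1+1/n}(0)\setminus\overline{B_1(0)}$ are both the thin annulus of measure $\pi((1+1/n)^2-1)\to 0$, hence $u_{1,n}\to u_1$ and $u_{0,n}\to u_0$ in $L^1(\R^2)$.

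There is no real obstacle here; the whole point of the lemma is to record this explicit counterexample to lower semicontinuity, which is precisely what makes the strict case of Theorem~\ref{thm:coreshellexist} more delicate than the equality case already settled by the direct perimeter identity. The construction above already contains the essence of what can go wrong — a thin coating of the intermediate phase — and the proof of Theorem~\ref{thm:coreshellexist} in the strict case will therefore need to argue directly that a core shell in $\mathcal{C}^{m_1}_{m_2}$ beats every competitor, without relying on lower semicontinuity of $P_\sigma$ or on a direct minimization argument in BV.
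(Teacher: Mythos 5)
Your construction is correct and proves the lemma as literally stated: the thin annulus of phase $1$ insulating the phase-$2$ disk costs $2\pi(\sigma_{01}+\sigma_{12})$ in the limit while the limiting configuration is charged $2\pi\sigma_{02}$, and the $L^1$ convergence is immediate. The core idea, inserting a vanishingly thin layer of phase $1$ along the $0$--$2$ interface, is exactly the paper's; your version is simpler because you work with a single radially symmetric example rather than a general lobe, and in that sense it is more explicit than the paper's proof, which refers to a schematic figure. The one substantive difference is that the paper's sequence preserves the masses $|A_{i,n}|=m_i$ of all three phases along the sequence: besides wrapping the layer $T_n$ around the phase-$2$ lobe, it hollows out a compensating small ball $B_n$ from the bulk of phase $1$, so that the failure of lower semicontinuity occurs \emph{within} the admissible class $\{|A_1|=m_1,\ |A_2|=m_2\}$ with $m_1,m_2>0$. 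In your example $|A_{1,n}|\to 0$, so the limit configuration has no phase $1$ at all and your sequence does not lie in any fixed constraint class; since $L^1$ convergence preserves masses, your example therefore does not by itself show that the direct method fails for the constrained minimization problem, which is the use the paper makes of this lemma in the surrounding discussion. The fix is easy and worth recording: either place a disjoint far-away disk of phase $1$ of area $m_1-|A_{1,n}|$ (whose perimeter converges to that of a disk of area $m_1$, so the strict inequality survives), or hollow out a small ball from a phase-$1$ region as the paper does. With that addendum your argument delivers everything the paper's proof does.
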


\begin{proof}
Consider a sequence of configurations like in Figure \ref{hollow out and spread thin}, and we denote by $T_n$ (resp. $B_n$) the thin layer 
(resp. small ball) of type I material wrapping around the lobe of type II
constituent (resp. hollowed out from the lobe of type I material).
Then by construction, we can choose $|B_n|\to 0$ as $n\to+\infty$,
and the thickness of $T_n$ also goes to zero. 
For any $n$, the perimeter of
$A_{2,n}$ is completely insulated from $ A_{0,n}$, and
the contribution of the perimeter between type II and $T_n$
is 
\[+\sigma_{12} \H^1(\pd A_{2,n} \cap \pd T_n),\]
while that between $T_n$ and $ A_{0,n}$ is
\[+\sigma_{01} \H^1(\pd A_{0,n} \cap \pd T_n).\]
As the thickness of $T_n$ goes to zero, both
\[\H^1(\pd A_{0,n} \cap \pd T_n),\qquad
\H^1(\pd A_{2,n} \cap \pd T_n)\]
converge to $\H^1(\pd A_2 \cap \pd A_0) $. Thus
\begin{equation}
\lim_{n\to+\infty}\sigma_{12} \H^1(\pd A_{2,n} \cap \pd T_n) 
+\sigma_{01} \H^1(\pd A_{0,n} \cap \pd T_n)=
(\sigma_{01} +\sigma_{12}) \H^1(\pd A_2 \cap \pd A_0).
\label{sum in the insulated case}    
\end{equation}
But in the limit case,
the boundary of the lobe of type II constituent will
have no layer of type I constituent insulating it anymore, so it
will contribute 
\[+\sigma_{02} \H^1(\pd A_2 \cap \pd A_0) \]
to the perimeter term. This is greater than the sum
\eqref{sum in the insulated case}. As the other terms are continuous
when passing to the limit $n\to+\infty$, i.e.
\begin{align*}
    \lim_{n\to+\infty} &\sigma_{1j} \H^1(\pd( A_{1,n}\setminus B_n) \cap \pd A_{j,n} )
=\sigma_{1j} \H^1(\pd A_{1}\cap \pd A_{j} ),\qquad j=0,2\\
    \lim_{n\to+\infty}& \sigma_{01} \H^1(\pd (A_{1,n}\setminus B_n)  \cap \pd B_n  )
    =0,
\end{align*}
we infer 
\[ \lim_{n\to+\infty} P_\sigma(A_{1,n},A_{2,n},A_{0,n} )
<P_\sigma(A_{1},A_{2},A_{0} ), \]
as desired.
\end{proof}

\begin{cor}
As a consequence of Lemma \ref{not lower semi-continuous}, the full nonlocal energy $\mathcal{E}(u)$
is also not lower semicontinuous with respect to
the strong $L^1$ topology.
\end{cor}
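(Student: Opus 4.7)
The plan is to reuse the sequence $(u_{1,n},u_{2,n})=(\mathbf{1}_{A_{1,n}},\mathbf{1}_{A_{2,n}})$ produced in Lemma \ref{not lower semi-continuous}, and to show that the nonlocal part of $\mathcal{E}$ is continuous along this $L^1$-convergent sequence, so that the perimeter defect is inherited by the full energy $\mathcal{E}$.

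First I would place the construction inside the ambient domain on which $\mathcal{E}$ is defined (the torus $\TT$): since the configurations of Lemma \ref{not lower semi-continuous} can be chosen to sit inside a ball of small diameter, they translate trivially into $\TT$ and remain admissible. If the prescribed mass constraints must be met, one fixes a faraway reservoir of each phase common to every $u_n$ and to the limit $u$; this does not affect the perimeter defect and merely shifts the nonlocal term by a constant.

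Next I would show that the nonlocal functional
\[ N(u):=\sum_{i,j=1}^2\frac{\gamma_{ij}}{2}\int_{\TT}\int_{\TT}G(x-y)\,u_i(x)\,u_j(y)\,dx\,dy \]
is continuous along such sequences. The Green's function $G$ on $\TT$ has only a logarithmic singularity, so $G\in L^1(\TT)$, and the $u_{i,n}$ are uniformly bounded by $1$. Writing
\[ u_{i,n}(x)u_{j,n}(y)-u_i(x)u_j(y)=(u_{i,n}-u_i)(x)\,u_{j,n}(y)+u_i(x)(u_{j,n}-u_j)(y), \]
and applying Young's convolution inequality $\|G\ast u_{j,n}\|_{L^\infty}\le\|G\|_{L^1}\|u_{j,n}\|_{L^\infty}\le\|G\|_{L^1}$, each of the two resulting contributions is bounded by a constant multiple of $\|u_{i,n}-u_i\|_{L^1}$ or $\|u_{j,n}-u_j\|_{L^1}$, both of which vanish. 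Hence $N(u_n)\to N(u)$.

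Combining this continuity with the strict inequality $\lim_{n} P_\sigma(u_n)<P_\sigma(u)$ supplied by Lemma \ref{not lower semi-continuous} yields $\lim_{n}\mathcal{E}(u_n)<\mathcal{E}(u)$, which is exactly the failure of lower semicontinuity. The only potentially delicate step is controlling $N$ on the thin-tube part of the sequence, but the convolution estimate handles this cleanly thanks to the $L^1$ bound on $G$ combined with the uniform $L^\infty$ bound on the $u_{i,n}$; arranging the mass constraints and embedding the construction into $\TT$ is routine.
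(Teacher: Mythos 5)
Your proposal is correct and follows the same route as the paper, which simply asserts that the interaction term is continuous along the $L^1$-convergent sequence of Lemma \ref{not lower semi-continuous}; you supply the details of that continuity via the product decomposition and the Young convolution bound $\|G\ast u_{j,n}\|_{L^\infty}\le\|G\|_{L^1}$, which is exactly the standard justification. The remarks about embedding into $\TT$ and preserving mass constraints are sensible housekeeping but do not change the argument.
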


\begin{proof}
It suffices to notice that the interaction term is continuous with respect
to the convergence from Lemma \ref{not lower semi-continuous}. 
\end{proof}

As a consequence of Lemma~\ref{not lower semi-continuous}, the existence of optimal configurations is significantly more
challenging when $\sigma_{02}   >  \sigma_{01}  + \sigma_{12}$, as it
prevents us from using most classical
arguments relying on
minimizing sequences. 
Thus, we have to rely on an ad hoc construction.


We require a geometrical lemma which proves an inner-cone condition for minimizers of $\PP_\sigma$.
We recall that, since we are working with sets $A_1,A_2,A_0$ of finite perimeter, both
$\partial A_i \cap \partial A_0$, $i=1,2$, are $\mathcal{H}^1$-rectifiable and hence Lipschitz regular.  The unit tangent vectors thus exist $\mathcal{H}^1$-a.e, but there may be corners on any of the components of the interfaces, and the following lemma restricts the sharpness of these angles.  Define the angle $\al_0$ as the the solution in $(0,\frac\pi{2})$ of 
    \[1-\sin\al - \sqrt{ \frac{\pi}{2}  \sin(2\al) } =0. \]

\begin{lem}\label{inner cone regularity of minima no triangular inequality}
	Let $m_1,m_2>0$, and 
	 $(A_1, A_2, A_0)$ in \eqref{partitionOme} be given.
If there exists
a point $x_0\in \sm:=\partial A_i \cap \partial A_j$ such that
the interior (to $A_i$) angle between the right and left tangent lines to $\sm$ at $x_0$
has amplitude $ \alpha < \alpha_0 $, then
there exists a perturbation of $(A_1, A_2, A_0)$ with 
the same respective masses but
lower (weighted) perimeter.
\end{lem}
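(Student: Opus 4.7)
I would build an admissible perturbation of $(A_1,A_2,A_0)$ from two pieces: a local surgery at $x_0$ that straightens the corner, paired with a remote mass-preserving disk perturbation. Both perimeter contributions scale linearly in a small cutoff radius $r$, and for $\alpha<\alpha_0$ the linear-in-$r$ gain from the surgery will strictly exceed the linear-in-$r$ cost of the compensation; the critical angle $\alpha_0$ emerges directly from the break-even condition.

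\textbf{Local surgery at $x_0$.} Place $x_0$ at the origin and rotate so the right and left tangent half-lines to $\sm=\partial A_i\cap\partial A_j$ bound an open wedge $W_\alpha$ of opening $\alpha$. For $r$ sufficiently small, the two branches of $\sm\cap B_r(x_0)$ are $C^0$ graphs over the tangent half-lines; let $p_r^{\pm}$ be their intersections with $\partial B_r(x_0)$, and let $T_r\subset A_i$ denote the region bounded by these two branches and the chord $\overline{p_r^- p_r^+}$. Elementary trigonometry yields
\[
|T_r|=\tfrac12 r^2\sin\alpha+o(r^2),\qquad \mathcal{H}^1(\overline{p_r^- p_r^+})=2r\sin(\alpha/2)+o(r),
\]
while the two arcs of $\sm$ completing $\partial T_r$ have total length $2r+o(r)$. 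Reassigning $T_r$ from chamber $i$ to chamber $j$ swaps $\sigma_{ij}$-weighted arc length $2r+o(r)$ for $\sigma_{ij}$-weighted chord length $2r\sin(\alpha/2)+o(r)$, producing
\[
\Delta P_{\text{loc}}=-2r\,\sigma_{ij}(1-\sin(\alpha/2))+o(r).
\]

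\textbf{Remote compensation and the threshold.} The surgery has moved mass $a:=|T_r|$ from $A_i$ into $A_j$. Since $|A_j|>0$, for $r$ small we can choose $y_0\in A_j$ and $\rho=\sqrt{a/\pi}=r\sqrt{\sin\alpha/(2\pi)}$ with the disk $D:=B_\rho(y_0)$ lying in the essential interior of $A_j$ and far from $x_0$; reassigning $D$ back to $A_i$ restores the prescribed masses and adds a $\sigma_{ij}$-weighted circle, so
\[
\Delta P_{\text{comp}}=2\pi\rho\,\sigma_{ij}=r\,\sigma_{ij}\sqrt{2\pi\sin\alpha}.
\]
Summing, the net weighted-perimeter change is
\[
\Delta P=r\,\sigma_{ij}\Bigl[-2(1-\sin(\alpha/2))+\sqrt{2\pi\sin\alpha}\Bigr]+o(r),
\]
which is strictly negative for all small $r$ iff $(1-\sin(\alpha/2))^2>\tfrac{\pi}{2}\sin\alpha$. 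Setting $\beta=\alpha/2$ and using $\sin\alpha=\sin(2\beta)$, this condition rearranges exactly to $1-\sin\beta-\sqrt{(\pi/2)\sin(2\beta)}>0$, which by the definition of $\alpha_0$ holds whenever $\beta<\alpha_0$, and hence a fortiori whenever $\alpha<\alpha_0$ (since $\alpha/2<\alpha<\alpha_0$).

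\textbf{Main obstacle.} The principal technical obstacle is justifying the $o(r)$ expansions under the sole hypothesis that $A_i,A_j,A_0$ are of finite perimeter, so the tangent lines to $\sm$ at $x_0$ exist a priori only in the measure-theoretic blow-up sense. One must verify that for all small $r$, the two branches of $\sm$ through $x_0$ can be represented as $C^0$ graphs over their tangent half-lines, that $\partial B_r(x_0)$ meets $\sm$ transversely at exactly two points $p_r^{\pm}$, and that enclosed area, arc length, and chord length converge to the wedge-geometry values at the rates claimed. Once this approximation is secured, the remaining items (choosing a Lebesgue density-$1$ site $y_0$ for the compensating disk and checking admissibility of the perturbed triple) are standard, and the strict decrease of $\mathcal{P}_\sigma$ follows.
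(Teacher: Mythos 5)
Your proposal is correct and follows essentially the same route as the paper's proof: truncate the corner near $x_0$ by a chord, reassign the cut-off region from chamber $i$ to chamber $j$, and restore the masses by swapping a small disk of equal area elsewhere, so that the linear-in-$r$ gain $2r\bigl(1-\sin(\alpha/2)\bigr)$ from replacing the two arcs by the chord beats the $2\sqrt{\pi\,|T_r|}$ compensation cost exactly under the condition defining $\alpha_0$. The only cosmetic difference is that you denote the full opening by $\alpha$ while the paper's proof writes it as $2\alpha$, which is why you (correctly) observe that the construction in fact works for all $\alpha<2\alpha_0$, a fortiori for $\alpha<\alpha_0$.
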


\begin{proof}
Assume there exists
a point $x_0\in \sm:=\partial A_i \cap \partial A_j$ such that
the angle between the right and left tangent lines to $\sm$ at $x_0$
has amplitude $2\alpha>0$. Then we perturb the entire
configuration in the following way:
\begin{enumerate}
    \item first, we choose points $p_\vep, q_\vep \in \sm$ such
    that the path distance on $\sm$ between $p_\vep, q_\vep$ and
    $x_0$ is $\vep$. Due to the Lipschitz regularity, $\sm$
    is approximated in first order, near $x_0$, by its tangent line,
    therefore we have also
    \[\dt_\vep:= |x_0-p_\vep|=|x_0-q_\vep|=\vep +o(\vep). \]
    
    \item Second, we connect $p_\vep$ and $q_\vep$ with a straight
    line segment $\llbracket p_\vep,q_\vep\rrbracket$, and define
    \[ \~\sm_\vep := \sm \setminus \{\text{portion of } \sm 
    \text{ between } p_\vep \text{ and }q_\vep\} \cup 
    \llbracket p_\vep,q_\vep\rrbracket.\]
    Geometrically, it is clear that
    \[  |p_\vep-q_\vep| =2\dt_\vep \sin \al + o(\vep) , \]
    thus 
    \[  \H^1(\~\sm_\vep ) =\H^1(\sm) - 2\dt_\vep (1-\sin\al)  + o(\vep). \]
    
    \item The construction in the previous step, however,
    alters the total masses of each type of constituent. Indeed, it is clear that the mass
    of type $i$ constituent
    inside
    the region delimited by 
    \[ \{\text{portion of } \sm 
    \text{ between } p_\vep \text{ and }q_\vep\}
    \qquad \text{and} \qquad \llbracket p_\vep,q_\vep\rrbracket,\]
    whose area is
    \[A_\vep := \frac{1}{2} \vep^2 \sin(2\al) +o(\vep^2), \]
    has been simply removed, and been replaced with type $j$ constituent. To balance this issue, we remove a ball of area $A_\vep$ from
    type $j$ constituent, and replace it with 
    type $i$ constituent. Our final competitor is the set
    obtained in this way.

\end{enumerate}
By construction, such a competitor from the previous
    three steps will have the exact same total masses for each type
    of constituent as the original
    configuration. The perimeter between types $i$ and $j$ constituents
    has been decreased by $2\dt_\vep (1-\sin\al)  + o(\vep)  $ due to the construction
    in Step 2, and increased by $ 2\sqrt{\pi A_\vep}$ in Step 3.
    Since 
    \[ 2\dt_\vep (1-\sin\al) - 2\sqrt{\pi A_\vep} 
    =2\vep [ 1-\sin\al - \sqrt{\frac{\pi}{2}  \sin(2\al) }] +o(\vep), \]
    which, 
    since we assumed $\al<\al_0$, where $\al_0$ is the solution of 
    \[1-\sin\al - \sqrt{\frac{\pi}{2}  \sin(2\al) } =0, \]
    becomes positive for all sufficiently small $\vep$,  
    we get that such a construction produces a competitor with less
    total
    (weighted) perimeter.
The proof is thus complete.
\end{proof}

With this geometrical lemma we can then show that in the case of strict inequality in \eqref{nonstrict} that there should be no interfaces between phases 2 and 0.

\begin{lem}\label{lem:omega2}
Let $\sigma_{ij}$ be given, such that 
	\[\sigma_{02}    \geq  \sigma_{01}  + \sigma_{12} . \] 
	Let $m_1,m_2>0$ be given, and 
	let
	 $(A_1, A_2, A_0)$ in \eqref{partitionOme}
	be a minimizer of \eqref{localE}.
Then $\H^1(\pd A_0 \cap\pd A_2) = 0$, i.e. $A_2$ is separated from the background $ A_0$ by $ A_1$.
\end{lem}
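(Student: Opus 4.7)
The plan is a contradiction argument. Assume some minimizer $(A_1,A_2,A_0)$ satisfies $\H^1(\pd A_0\cap\pd A_2)>0$. I will build an admissible competitor with strictly smaller weighted perimeter by inserting a thin layer of phase~$1$ just above a regular piece of the $0$--$2$ interface, and then balancing the induced mass change with a small density-point swap of phase~$1$ for phase~$0$ far away.

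By rectifiability of the reduced boundary of a finite-perimeter set, $\H^1$-a.e.\ $x_0\in\pd^* A_0\cap\pd^* A_2$ admits an approximate tangent line, and the blow-ups of $A_0$ and $A_2$ at $x_0$ are complementary half-planes. Rotating coordinates, I take the tangent horizontal with $A_2$ below and $A_0$ above. For $r>0$ small the trace $L_r=(\pd A_0\cap\pd A_2)\cap B_r(x_0)$ satisfies $\H^1(L_r)=2r+o(r)$, and the open ``upper strip'' $S_{r,\vep}$ of vertical width $\vep\ll r$ just above $L_r$ sits inside $A_0$ up to an error of measure $o(r\vep)$. I then define the competitor by converting $S_{r,\vep}\cap A_0$ from phase~$0$ to phase~$1$. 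Mass balance is restored by picking a Lebesgue density $1$ point $y_0$ of $A_1$ (which exists since $m_1>0$) with $\dist(y_0,\pd A_2)>0$, and removing from $A_1$ a ball $B_\rho(y_0)$ whose area matches the newly added $|S_{r,\vep}\cap A_0|$, so that $\rho=O(\sqrt{r\vep})$. Call the resulting triple $\widetilde A=(\widetilde A_1,\widetilde A_2,\widetilde A_0)$; by construction $|\widetilde A_i|=m_i$ for $i=1,2$.

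The perimeter change decomposes into a local piece near $x_0$ and a compensating piece near $y_0$. Inside $B_r(x_0)$ the $2$--$0$ interface along $L_r$ is replaced by a $2$--$1$ interface of the same length, the top of $S_{r,\vep}$ becomes a new $1$--$0$ arc of length $2r+o(r)$, and the two vertical sides of $S_{r,\vep}$ add $O(\vep)$ of $1$--$0$ or $1$--$2$ interface. The compensating ball contributes at most $\sigma_{01}\cdot 2\pi\rho=O(\sqrt{r\vep})$ of new $0$--$1$ interface. Summing,
\[ \PP_\sigma(\widetilde A)-\PP_\sigma(A)\ \le\ (\sigma_{01}+\sigma_{12}-\sigma_{02})(2r+o(r))+O(\vep)+O(\sqrt{r\vep}). \]
In the strict case $\sigma_{02}>\sigma_{01}+\sigma_{12}$, the leading term is a fixed negative constant once $r$ is taken small, and sending $\vep\to 0$ then makes $\widetilde A$ strictly cheaper than $A$, contradicting minimality. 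In the equality case the first-order term vanishes, but the equality part of Theorem~\ref{thm:coreshellexist} already proven above shows that every minimizer lies in $\mathcal{C}_{m_2}^{m_1}$, so $\H^1(\pd A_0\cap\pd A_2)=0$ holds automatically.

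The main obstacle is making the measure-theoretic surgery rigorous: ensuring that $S_{r,\vep}$ is genuinely a subset of $A_0$ modulo negligible error, that the compensating ball sits inside $A_1$ and does not meet $\pd A_2$, and that the new vertical side pieces contribute only $O(\vep)$. All three are controlled by working at blow-up regular points of $\pd^*A_0\cap\pd^*A_2$ and at Lebesgue density points of $A_1$, where the deviations from the flat-interface and full-density models are $o(1)$ as $r\to 0$ and $\rho\to 0$; these errors are absorbed into the negative dominant term for the final choice $\vep\ll r\ll 1$.
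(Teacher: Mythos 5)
Your strategy is sound and reaches the right conclusion, but it takes a genuinely different route from the paper. The paper's proof is \emph{global}: it wraps the \emph{entire} interface $\pd A_0\cap\pd A_2$ in a thin insulating layer of phase $1$ of thickness $\vep/\H^1(\pd A_0\cap\pd A_2)$, compensates the mass by hollowing a ball of area $\vep$ out of $A_1$, and obtains the net change $2\sigma_{01}\sqrt{\pi\vep}+f(\vep)-(\sigma_{02}-\sigma_{01}-\sigma_{12})\H^1(\pd A_0\cap\pd A_2)<0$. The bulk of the paper's work then goes into constructing that layer around an a priori merely rectifiable curve and proving that its outer boundary is only $O(\vep)$ longer than the curve itself (the piecewise-linear approximation with the convex/concave corner bookkeeping and the total-turning bound $2(A+1)\vep$). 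You instead localize at a single blow-up-regular point of $\pd^*A_0\cap\pd^*A_2$, where measure-theoretic flatness gives $\H^1(L_r)=2r+o(r)$ and density $0$ for $A_1$, so that the saving $(\sigma_{02}-\sigma_{01}-\sigma_{12})(2r+o(r))$ is a fixed negative quantity dominating the $O(\vep)+O(\sqrt{r\vep})$ cost as $\vep\to0$ with $r$ fixed. What your version buys is that you never need a quantitative global length estimate for the displaced curve, only leading-order control at one good point; what the paper's version buys is that it never invokes blow-ups or reduced boundaries and produces an explicit competitor. Your dispatch of the equality case by citing the already-proven equality case of Theorem~\ref{thm:coreshellexist} is legitimate and in fact matches the paper, whose proof of the lemma only treats the strict case.

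One caveat: the step you flag as ``the main obstacle'' is genuinely the crux, and blow-up flatness alone does not finish it. The claim that the top of $S_{r,\vep}$ is a new $1$--$0$ arc of length $2r+o(r)$ while the sides contribute only $O(\vep)$ is exactly the layer-construction problem the paper spends Steps 1--3 on: $L_r$ can be badly wiggly at scales much smaller than $r$ even when its total length excess is $o(r)$, so the vertical $\vep$-translate of $L_r$ need not bound a strip whose upper edge has comparable length without an argument. You would either need a version of the paper's turning-number estimate on $L_r$, or (cleaner, and recommended) replace the tube over $L_r$ by the set $A_0\cap B_r\cap\{x_2<\vep\}$ and account for its boundary by slicing: every piece of the old $\pd^* A_0$ inside the converted region strictly decreases the weighted perimeter (since $\sigma_{12}<\sigma_{02}$ and the $0$--$1$ pieces disappear), and the only new interface lies on $\{x_2=\vep\}\cap B_r$ and on $\pd B_r\cap\{x_2<\vep\}$, which is $2r+O(\vep)+o(r)$ for a good choice of $r$. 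With that repair the argument closes; as it stands, the gap is of the same nature (and roughly the same severity) as the informality in the paper's own construction.
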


\begin{proof}
          Case I: $\sigma_{02}  >  \sigma_{01}  + \sigma_{12} $.
	Assume the opposite, i.e. $\H^1(\pd A_0 \cap\pd A_2)>0$. We construct
	a competitor with lower perimeter in the following way:
	\begin{enumerate}
		\item first, remove a ball $B_\vep$ of area $\vep$ from $A_1$, and fill it
		with background constituent $A_0$. This step
		creates a new perimeter $\pd B_\vep$, between $A_1$ and the background $A_0$. Thus
		the perimeter is increased by 
		$$+2\sigma_{01} \sqrt{\pi\vep}.$$

\begin{figure}[ht]
\centering
\includegraphics[scale=0.4]{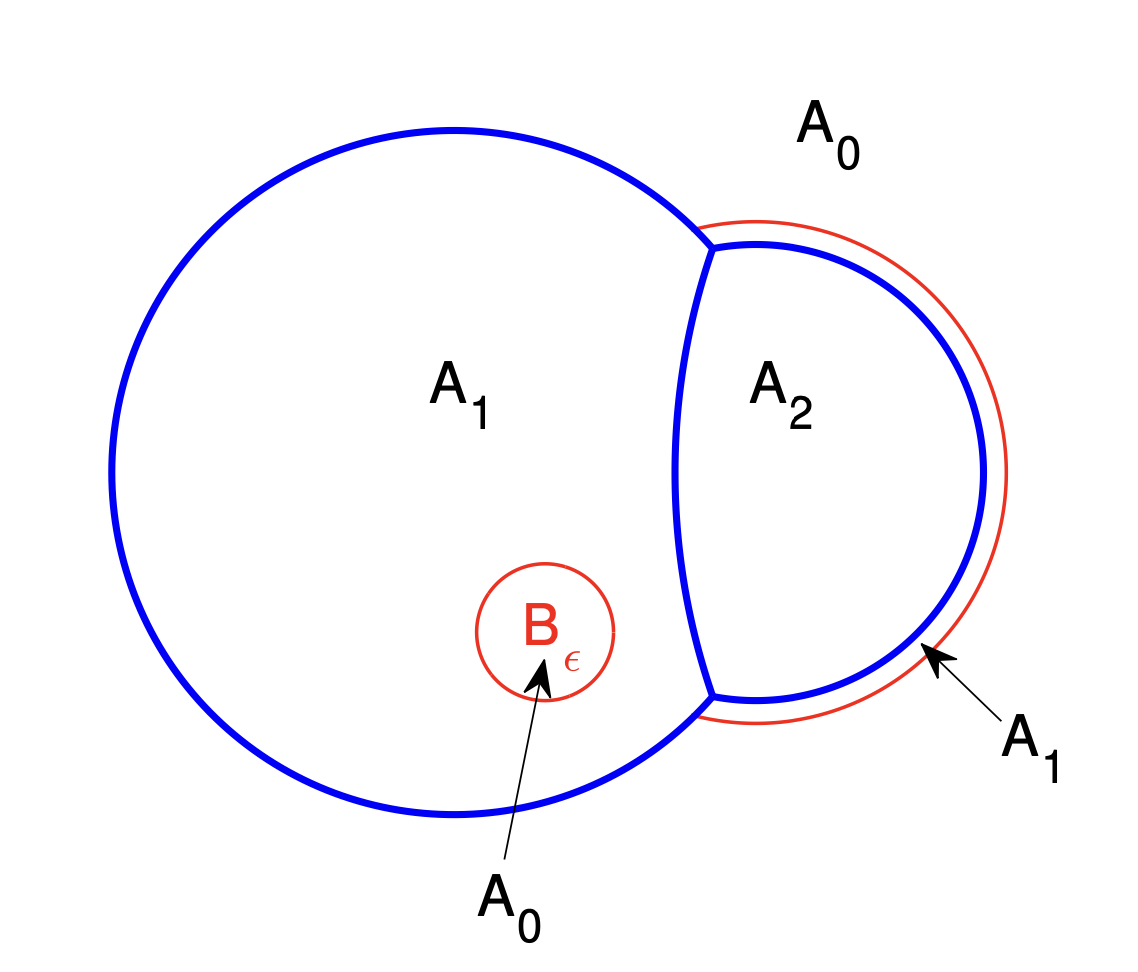}
\caption{Construction of a competitor with lower perimeter}
\label{hollow out and spread thin}
\end{figure}
		
		
		\item Next, we claim that we can add a thin layer around $\pd A_2 \cap\pd A_0$ of thickness
		$\vep/\H^1(\pd A_2 \cap\pd A_0)$, so the total mass of type I constituent is preserved.
		This adds a new perimeter, again between $A_1$ and the background $A_0$, of length
		$$+\sigma_{01}\H^1(\pd A_2 \cap\pd A_0) +f(\vep) $$ 
		for some function $f(\vep)\to 0$
		as $\vep\to 0$.
		However, this step also completely erases the former boundary $\pd A_2 \cap\pd A_0$,
		and transforms it into a boundary between $A_1$ and $ A_2$: thus the perimeter
		decreases by a term
		 \[ ( -\sigma_{02} + \sigma_{12})\H^1(\pd A_2 \cap\pd A_0)  .\]
	\end{enumerate}
Assuming that step 2 can be achieved, we complete the proof of Lemma~\ref{lem:omega2} and verify the details of the construction of the modified domain below.
The above construction induces a change in the perimeter of
\[+2\sigma_{01} \sqrt{\pi\vep} +f(\vep)
- (\sigma_{02}-\sigma_{01} - \sigma_{12})\H^1(\pd A_2 \cap\pd A_0) <0,\]
which contradicts the minimality of $(A_1,A_2, A_0)$.

It remains to verify the details of the construction in step 2 above, constructing the insulating layer as in 
Figure \ref{hollow out and spread thin}.


\medskip

{\em Step 1. Choosing the points.}
Take a curve $\Gamma$ on $  \pd A_2\cap \pd A_0$, 
and let
$\gamma:[0,1]\lra \Gamma$ be a constant speed parameterization. 
Furthermore, impose that the angle between 
the tangents
$\gamma'(0)$ and $\gamma'(s)$, $s\in [0,1]$, never exceeds $2\pi$.
Without loss of generality, assume the overall net turning from
$\gamma'(0)$ to $\gamma'(1)$ is in the counterclockwise sense, and we denote such turning
by
\[ A:=\angle \gamma'(0) \gamma'(1) \in [0,2\pi].\]
Let 
$$\Gamma_n:= \bigcup_{i=1}^n \llbracket  t_{i-1,n} ,t_{i,n}  \rrbracket,\qquad
t_{i,n}:= \gamma( s_{i,n} ),\quad s_{i,n}:=\frac{i}{n}, $$ 
be the piecewise linear curve through all the $t_{i,n}$. Note that this construction ensures 
$\H^1(\Gamma_n) \to \H^1(\Gamma)  $.
Now, for each $t_{i,n}$, 
$i=1,\cdots,n-1$,
denote by $\nu_{i,n}^\pm$ the left/right exterior (i.e. pointing towards the background) unit normal to $\Gamma_n$
at $t_{i,n}$, and let
\[ t_{i,n}^{\vep,\pm} := t_{i,n}+\vep \nu_{i,n}^\pm,\qquad i=1,\cdots,n-1. \]
For $i=0,n$, denote by $\nu_{i,n}$ the exterior unit normal to $\Gamma_n$.

\medskip

{\em Step 2. Constructing the layer.} Connect
$t_{0,n}$ to $t_{0,n}^\vep$, and $t_{n,n}$ to $t_{n,n}^\vep$, and then
 $t_{i-1,n}^{\vep,+}$ to  $t_{i,n}^{\vep,-}$, $i=1,\cdots,n$,
with line segments (with the convention $t_{0,n}^{\vep,+}=t_{0,n}^{\vep}$,
$t_{n,n}^{\vep,-}=t_{n,n}^{\vep}$). 

\begin{figure}[ht]
\centering
\includegraphics[scale=0.25]{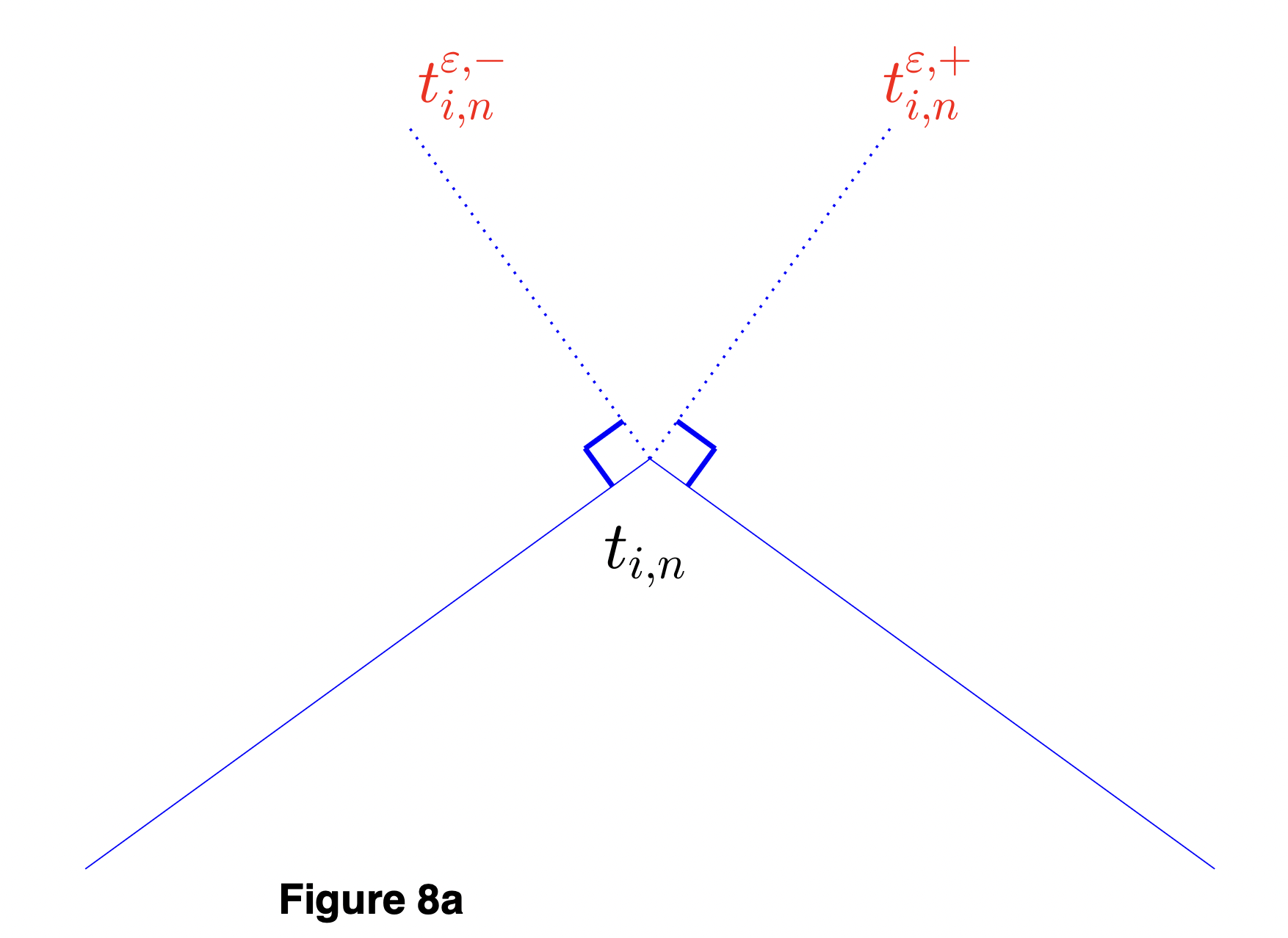}
\includegraphics[scale=0.25]{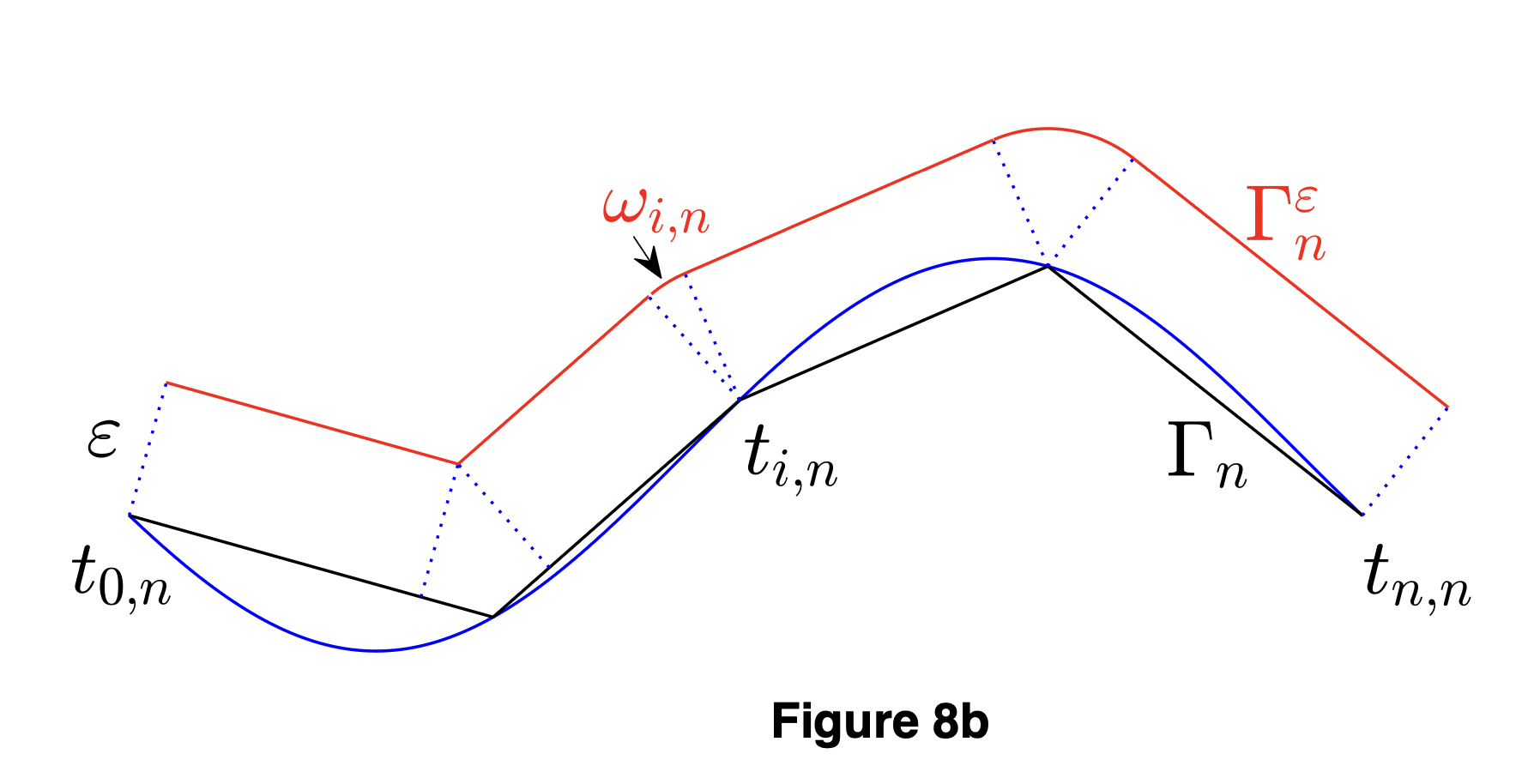}
\caption{A schematic representation of the construction.}
	\label{construction of the piecewise linear outer layer}
\end{figure}

The resulting set 
$$
\llbracket t_{0,n}^\vep, t_{0,n} \rrbracket
\cup 
\llbracket t_{n,n}^\vep, t_{n,n} \rrbracket\cup 
\bigcup_{i=1}^n \llbracket t_{i-1,n}^{\vep,+}, t_{i,n}^{\vep,-}\rrbracket $$ 
however,
might have two issues:
\begin{enumerate}
	\item first, if the angle in $t_{i,n}$ is convex, then
	the line segments $\llbracket t_{i-1,n} ^{\vep,+} , t_{i,n}^{\vep,-}\rrbracket$
and $\llbracket t_{i,n} ^{\vep,+} , t_{i+1,n}^{\vep,-}\rrbracket$ do not intersect.

\item Second, if the angle in $t_{i,n}$ is concave, then
the line segments $\llbracket t_{i-1,n}^{\vep,+}, t_{i,n}^{\vep,-}\rrbracket$ 
and $\llbracket t_{i,n}^{\vep,+}, t_{i+1,n}^{\vep,-}\rrbracket$ 
do cross each other.
\end{enumerate}
To overcome it, we do the following:
\begin{enumerate}
	\item if the angle in $t_{i,n}$ is convex, then we connect 
	the line segments $, t_{i,n}^{\vep,-}$
	and $ t_{i,n}^{\vep,+}$ with an arc of circle $\omega_{i,n}$ centered in $ t_{i,n}$.
	
	\item If the angle in $t_{i,n}$ is concave, then
	define $p_{i,n}^\vep:=  \llbracket t_{i-1,n}^{\vep,+}, t_{i,n}^{\vep,-}\rrbracket\cap \llbracket t_{i,n}^{\vep,+}, t_{i+1,n}^{\vep,-}\rrbracket$.  Then, replace 
these two segments
	with 
		$\llbracket t_{i-1,n}^{\vep,+},p_{i,n}^\vep\rrbracket\cup \llbracket p_{i,n}^\vep, t_{i+1,n}^{\vep,-}\rrbracket$
\end{enumerate}

Let $\Gamma_{n}^\vep$ be the resulting curve. By taking the limit
$n\to +\8$, we obtain a curve $\Gamma^\vep$ that
 plays the role of the outer boundary of the ``insulating layer'' from Figure \ref{hollow out and spread thin}.

\medskip

{\em Step 3. Estimating the length of }$\Gamma_{n}^\vep$. We want to estimate
from above
 the difference
$\H^1(\Gamma^\vep)-\H^1(\Gamma)$, by first estimating
$\H^1(\Gamma_n^\vep)-\H^1(\Gamma_n)$, and then pass to the limit $n\to+\8$.

Around a convex angle of amplitude $2\al$, there 
is 
\[  2\vep\bt,\qquad \bt:=\frac{\pi}{2}-\al, \]
{\em extra} length. See Figure \ref{convex angle case}.

\begin{figure}[ht]
\centering
\includegraphics[scale=0.35]{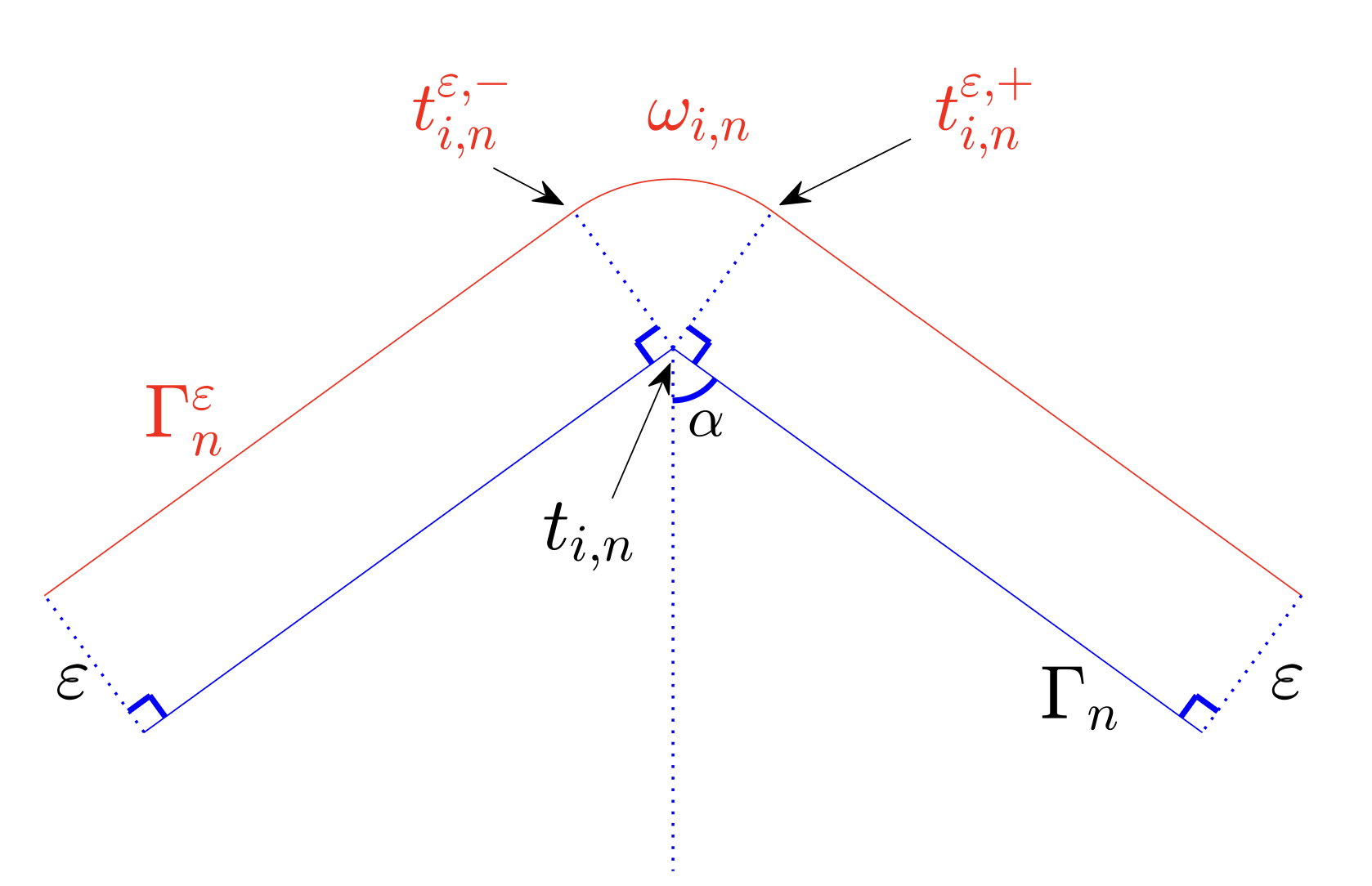}
\caption{Geometry around a convex angle}
	\label{convex angle case}
\end{figure}

Around a concave angle of amplitude $2\al$, there 
is 
\[  2\vep \tan\bt,\qquad \bt:=\frac{\pi}{2}-\al, \]
{\em less} length. See Figure \ref{concave angle case}.
\begin{figure}[ht]
\centering
\includegraphics[scale=0.35]{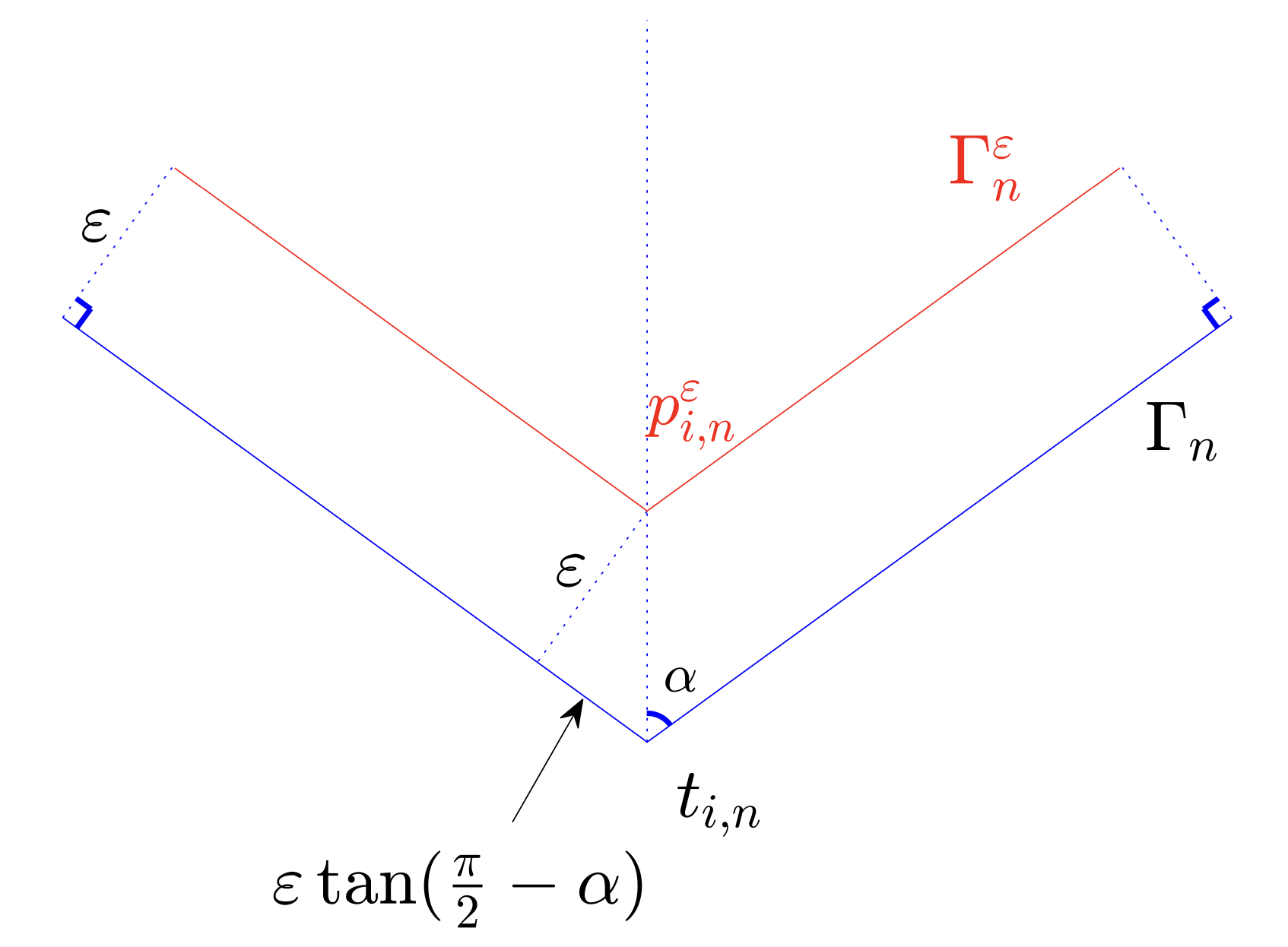}
\caption{Geometry around a concave angle}
	\label{concave angle case}
\end{figure}
Recalling that we have the extra pieces
$\llbracket t_{0,n}^\vep, t_{0,n} \rrbracket
$ and $
\llbracket t_{n,n}^\vep, t_{n,n} \rrbracket$, which might add $2\vep$ in length,
we have
\[ \H^1(\Gamma_n^\vep)-\H^1(\Gamma_n) \le
2\vep\Big[1+\sum_{i=1}^{N_+}  \bt_i^+ -\sum_{i=1}^{N_-} \tan \bt_i^-
\Big],\qquad
\bt_i^\pm :=\frac{\pi}{2}-\al_i^\pm,
\]
where the $\al_i^+$ (resp. $\al_i^-$) are the convex (resp. concave) angles.

Note that the tangent turns in opposite directions around a convex angle compared to a concave one, thus the total turning of the tangent is
\begin{align}
\label{total turning angle condition}
 A^+-A^-=A\in [0,2\pi],\qquad A^+:= \sum_{i=1}^{N_+} \bt_i^+ ,\quad
A^-:=  \sum_{i=1}^{N_-} \bt_i^- . 
\end{align}
Thus we need to bound
\[\sum_{i=1}^{N_+}  \bt_i^+ -\sum_{i=1}^{N_-} \tan \bt_i^-\]
from above, subject to \eqref{total turning angle condition}. Using the convexity of 
$\tan$, and the fact that $\tan\th\ge \th$,
\begin{align*}
\sum_{i=1}^{N_+}  \bt_i^+ -\sum_{i=1}^{N_-} \tan \bt_i^- &
\le A^+ - N^- \tan\frac{A^-}{N^-}
\le A^+ -A^-=A.
\end{align*}
Thus, combining all the above estimates,
\[ \H^1(\Gamma_n^\vep)-\H^1(\Gamma_n) \le 2(A+1)\vep. \]
The right hand side is now independent of $n$, which allows to take the $\liminf$ for $n\to+\8$:
our construction ensured $\H^1(\Gamma_n)\to \H^1(\Gamma)$,
while $\H^1(\Gamma^\vep)\le \liminf_{n\to+\8} \H^1(\Gamma_n^\vep)$,
hence
\[ \H^1(\Gamma^\vep)-\H^1(\Gamma) \le 2(A+1)\vep. \]

%

\end{proof}

We may now complete the proof of Theorem~\ref{thm:coreshellexist}, in the case of strict inequality in \eqref{nonstrict}.
\begin{proof}[Proof of Theorem~\ref{thm:coreshellexist}, remaining case]

With the conclusion of Lemma~\ref{lem:omega2}, the conclusion of Theorem~\ref{thm:coreshellexist} follows as in the case of equality.  Indeed,  since $\H^1(\pd A_0 \cap\pd A_2) = 0$, we again obtain the following identity for the total weighted perimeter,
\begin{eqnarray}
P_\sigma(A_{1},A_{2}, A_{0}) &=& 
\sigma_{01}    \mathcal{H}^1 (\partial  A_0 \cap \partial A_1) +
 \sigma_{12}   \mathcal{H}^1 (\partial A_1 \cap \partial A_2) 
\notag \\
&= &  \sigma_{01} P_{\R^2} (A_1 \cup A_2) +   \sigma_{12} P_{\R^2} ( A_2 ). \notag
\end{eqnarray}
Again, $A_2\subset A_1\cup A_2$, and each is optimized by choosing a disk of the appropriate area, and thus Theorem~\ref{thm:coreshellexist} is proven in both cases.

\end{proof}



It is interesting to think of the core shell as a limit case of the 
weighted
double bubbles as we increase the surface tension $\sigma_{02}$ to a point where equality is attained in \eqref{triangle}.  The numerical experiments in Figure \ref{evolves} and Figure \ref{evolves2}
illustrate this process. Beginning from the symmetric situation (all $\sigma_{ij}$ equal), we increase $\sigma_{02}$; so to reduce \eqref{localE}, $\H^1(\pd A_0 \cap\pd A_2)$ will decrease and we observe a weighted double bubble will be the solution; see (b) to (d) in Figure \ref{evolves} and Figure \ref{evolves2}. If we continue increasing  $\sigma_{02}$, when $\sigma_{02} = \sigma_{01} + \sigma_{12} $, a generalized core-shell with two circles tangential to each other is the minimizer; see (e) in Figure  \ref{evolves} and Figure \ref{evolves2}. 
As the location of the core shell is not determined by the geometry problem, we will see that it is the second order Gamma convergence and the interaction term $\Gamma_{12}$ that will determine the location.

\subsection{Pattern 3: Two single bubbles }

When $\sigma_{12}\ge \sigma_{01}+\sigma_{02}$, and strict inequality holds in the other two of \eqref{triangle}, we expect to have single bubble configurations.
This case is similar to the previous one, except now it is the interface between $A_1$ and $A_2$ which is effectively penalized, and minimizers should prefer to insert a layer of $A_0$ between these two components.  The consequence is that the optimal geometry separates the two minority phases into disjoint balls.

\begin{lem}\label{case3}
 Let $\sigma_{ij}$ be given, such that 
	\[\sigma_{12}    \geq  \sigma_{01}  + \sigma_{02} . \] 
	Let $m_1,m_2>0$ be given, and 
	let
	 $(A_1, A_2, A_0)$ in \eqref{partitionOme}
	be a minimizer of \eqref{localE}.
Then $\H^1(\pd A_1 \cap\pd A_2) = 0$, 
and the minimizer consists of disjoint balls $A_1$, $A_2$.
\end{lem}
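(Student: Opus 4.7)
The plan is to follow the same two-step structure as the proof of Theorem~\ref{thm:coreshellexist}, but with the roles of the phases interchanged: whereas Lemma~\ref{lem:omega2} used a thin layer of phase $1$ to insulate phase $2$ from the background $0$ when $\sigma_{02}\ge\sigma_{01}+\sigma_{12}$, here the expensive interface is between $A_1$ and $A_2$, and I want to insulate them from each other by inserting a thin layer of the background phase $A_0$.

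First I would dispatch the equality case $\sigma_{12}=\sigma_{01}+\sigma_{02}$, which is direct. Writing the weighted perimeter out, I substitute $\sigma_{12}=\sigma_{01}+\sigma_{02}$ to reorganize the terms as
\begin{align*}
P_\sigma(A_1,A_2,A_0)
&=\sigma_{01}\bigl[\mathcal{H}^1(\partial A_0\cap\partial A_1)+\mathcal{H}^1(\partial A_1\cap\partial A_2)\bigr]\\
&\quad+\sigma_{02}\bigl[\mathcal{H}^1(\partial A_0\cap\partial A_2)+\mathcal{H}^1(\partial A_1\cap\partial A_2)\bigr]\\
&=\sigma_{01}\,P_{\R^2}(A_1)+\sigma_{02}\,P_{\R^2}(A_2).
\end{align*}
The two terms are now decoupled and each is bounded below, by the classical isoperimetric inequality, by $2\sigma_{0i}\sqrt{\pi m_i}$, with equality iff $A_i$ is a disk of mass $m_i$. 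Since the admissibility constraint $|A_1\cap A_2|=0$ allows one to place two such disks disjointly in $\R^2$, the infimum is attained precisely by pairs of disjoint (or at worst tangent) disks, and in particular any minimizer satisfies $\mathcal{H}^1(\partial A_1\cap\partial A_2)=0$.

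For the strict case $\sigma_{12}>\sigma_{01}+\sigma_{02}$, I would argue by contradiction: suppose a minimizer has $\mathcal{H}^1(\partial A_1\cap\partial A_2)>0$, and construct a lower-perimeter competitor by inserting a thin layer of phase $0$ along this interface. Concretely, remove a small ball $B_\varepsilon\subset A_0$ of area $\varepsilon$, adding $+2\sigma_{01}\sqrt{\pi\varepsilon}$ to the perimeter if we attach it to $A_1$ (or use $\sigma_{02}$ if attached to $A_2$); this preserves the total of the phase that absorbs it. Then thicken $\partial A_1\cap\partial A_2$ on the $A_1$--side into a layer of $A_0$ of thickness $\varepsilon/\mathcal{H}^1(\partial A_1\cap\partial A_2)$, so that the mass lost from $A_1$ is exactly $\varepsilon$, matching the ball taken from $A_0$ and reassigned to $A_1$. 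This operation replaces each unit of $\sigma_{12}$-interface by $\sigma_{01}+\sigma_{02}$ worth of interface with the newly inserted background layer, yielding a net change
\[-(\sigma_{12}-\sigma_{01}-\sigma_{02})\,\mathcal{H}^1(\partial A_1\cap\partial A_2)+2\sigma_{01}\sqrt{\pi\varepsilon}+f(\varepsilon),\]
with $f(\varepsilon)\to 0$, which is strictly negative for $\varepsilon$ small enough, a contradiction.

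The main obstacle is the same technical point that arose in Lemma~\ref{lem:omega2}: making rigorous the construction of the thin layer so that the length of its outer boundary exceeds $\mathcal{H}^1(\partial A_1\cap\partial A_2)$ only by a quantity $f(\varepsilon)=o(1)$, handling possible convex or concave corners of the original interface. Fortunately, the piecewise-linear approximation and the angle/turning estimates developed in Step~2--3 of the proof of Lemma~\ref{lem:omega2} apply verbatim here, since only the Lipschitz regularity of $\partial A_1\cap\partial A_2$ and the integrated turning of its tangent are used; I would simply invoke that construction, with the phases relabelled. Finally, once $\mathcal{H}^1(\partial A_1\cap\partial A_2)=0$ is established, the same computation as in the equality case gives $P_\sigma=\sigma_{01}P_{\R^2}(A_1)+\sigma_{02}P_{\R^2}(A_2)$, and the isoperimetric inequality forces $A_1$ and $A_2$ to be disjoint disks of the prescribed masses, completing the proof.
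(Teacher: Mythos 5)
Your proof is correct, and your treatment of the equality case $\sigma_{12}=\sigma_{01}+\sigma_{02}$ (decoupling $P_\sigma$ into $\sigma_{01}P_{\R^2}(A_1)+\sigma_{02}P_{\R^2}(A_2)$ and applying the isoperimetric inequality to each summand) is exactly what the paper does. Where you diverge is in the key step for the strict case: to rule out $\mathcal{H}^1(\partial A_1\cap\partial A_2)>0$ for a minimizer, you adapt the thin-layer insulation construction of Lemma~\ref{lem:omega2}, inserting a sliver of phase $0$ along the $1$--$2$ interface and compensating the mass of $A_1$ with a small ball carved out of the background. The paper instead exploits a feature specific to this configuration and unavailable in the core-shell case: since the phase to be separated from $A_1$ is a bounded set and the ambient phase $A_0$ is unbounded, one can simply translate $A_2$ to a set $A_2'$ with $\overline{A_1}\cap\overline{A_2'}=\emptyset$. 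This preserves all areas exactly and converts each unit of $\sigma_{12}$-interface into one unit of $\sigma_{01}$-interface plus one unit of $\sigma_{02}$-interface, giving a strict decrease with no limiting procedure, no corner or turning estimates, and no $o(1)$ error terms to control. Your route does work --- the parallel-curve and turning-angle estimates of Lemma~\ref{lem:omega2} transfer, with convex and concave corners exchanging roles since the layer is built on the $A_1$ side --- but it imports substantial technical machinery that the translation argument renders unnecessary; its only advantage is that it is a local modification, which could matter in a setting (say, a bounded or periodic domain crowded by other components) where a global translation of $A_2$ is not available.
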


\begin{proof}
We can follow the same steps as in the proof of Lemma~\ref{lem:omega2}  in case $\sigma_{12} =  \sigma_{01}  + \sigma_{02}$, but the situation is simpler when $\sigma_{12} >  \sigma_{01}  + \sigma_{02} $.
Suppose for a contradiction that there exists a minimizer of \eqref{localE} with  $\mathcal{H}^1(\partial A_1\cap\partial A_2)>0$.
Let $A_2^{\prime}$ be a translation of $A_2$, chosen such that $ \overline{A_1} \cap\overline{A_2^{\prime}}=\emptyset$, and $ A_0^{\prime}=\R^2\setminus\overline{( A_1\cup A_2^{\prime})}$.  Then, $|A_2^{\prime}|=|A_2|$, and $\mathcal{H}^1(\partial A_1\cap\partial A_2^{\prime})=0$, while the other components of the boundary have the same perimeter as before. 
Thus,
$$
P_\sigma(A_{1},A_{2}^{\prime},A_{0}^{\prime} )
< P_\sigma(A_{1},A_{2},A_{0} ),
$$
as $\sigma_{01}  + \sigma_{02} < \sigma_{12} $,
which contradicts the minimality of $(A_1,A_2, A_0)$.

Now that we have established $\H^1(\pd A_1 \cap\pd A_2) = 0$, the total perimeter splits into the weighted sum of the perimeters of $A_1$ and $A_2$, and each is minimized independently, resulting in a disjoint union of two balls of the given masses.
\end{proof}

As in the core shell case, the degeneracy of the weighted perimeter is felt through the nonuniqueness of minimizing configurations; the relative positions of the two bounded components of a minimizing cluster is arbitrary.

\section{The Geometry of Core Shell Configurations}

In this section we study the combined effect of the local (weighted) isoperimetric energy and the nonlocal interaction energy in the formation of core shell assemblies obtained by minimization of $E_\eta$ (see \eqref{Eeta1}) in the droplet regime limit.  Following our analysis of the isoperimetric problem in the previous section, this entails making the choice 
\begin{equation}\label{equality}   \sigma_{02}   =  \sigma_{01}  + \sigma_{12}  
\end{equation}
in \eqref{triangle}, in order that core shells are energetically preferred in $P_\sigma$.
As described in the Introduction, in Theorem~\ref{firstlimit} we will prove a $\Gamma$-convergence result for $E_\eta$ as $\eta\to 0$ to the limiting energy defined in \eqref{ezero}, \eqref{ezerobar}, for any $\sigma_{ij}$ satisfying the triangle inequalities \eqref{triangle}, including the case of equality above.  

\begin{figure}[ht]
\centering
\includegraphics[scale=0.2]{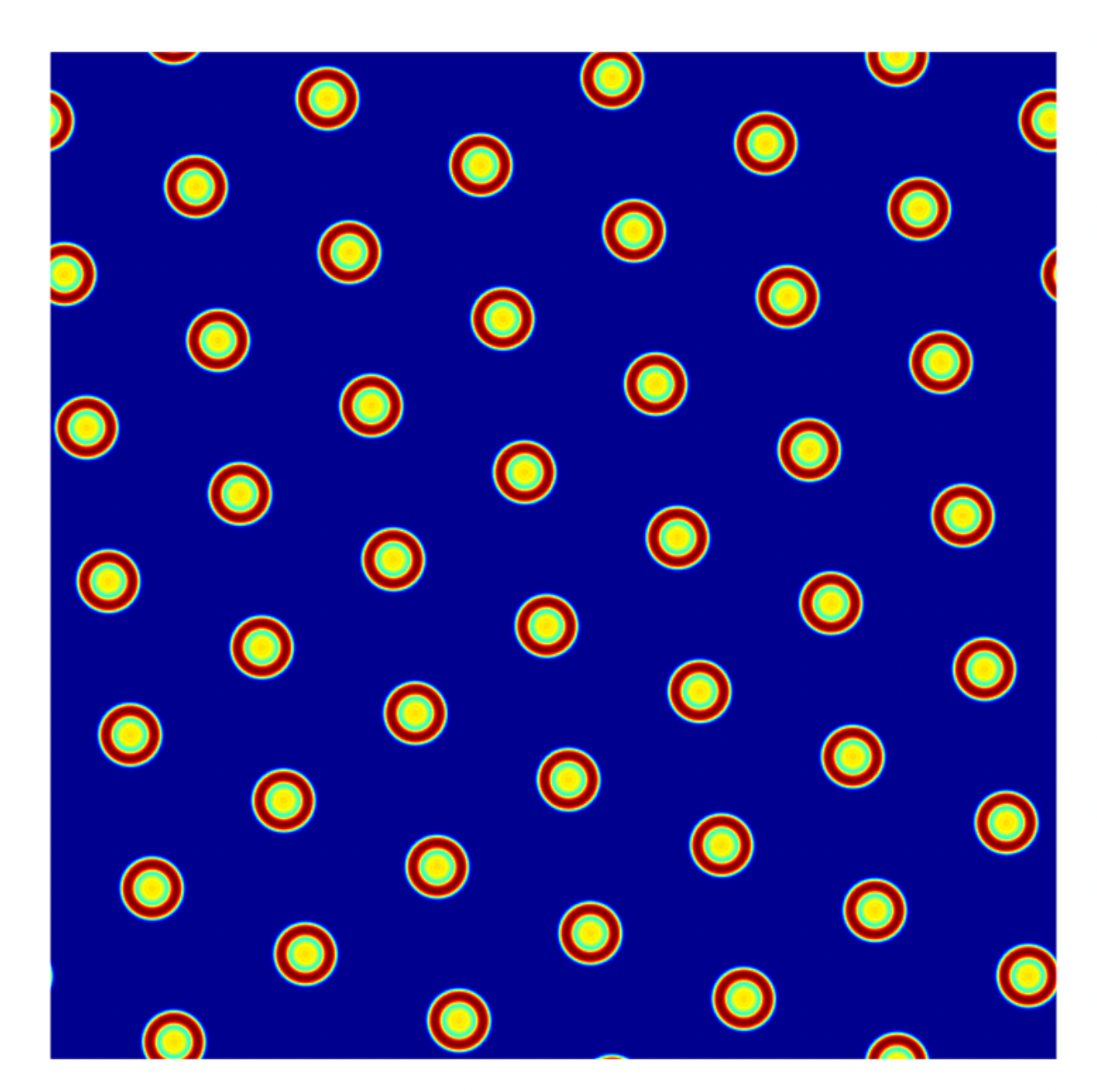}
\caption{ Another numerical simulation when $\sigma_{02}   =  \sigma_{01}  + \sigma_{12}$. These core shells are well-organized: concentric, of equal size, and distributed in a hexagon pattern.
Here $\sigma_{02} = 2$, $ \sigma_{01}=\sigma_{12}=1$, $ M_1 = 0.12$, $M_2 = 0.06$, $\gamma_{11} = 20,000$, $\gamma_{12}=\gamma_{21}=0$, $\gamma_{22} = 100,000$. }
\label{multiCoShe}
\end{figure}

By Theorem~\ref{thm:coreshellexist}, with the choice \eqref{equality} we have an explicit form for the local part of the energy,
\begin{eqnarray}
&& \inf  \left \{  \sum_{0 \leq i < j \leq 2} \sigma_{ij}   \mathcal{H}^1 (\partial A_i \cap \partial A_j)   : |A_i| =  m_i , i = 1,2;
A_0 = \mathbb{R}^2 \setminus \overline{(A_1 \cup A_2 )}  \right \}  \notag \\
&=& 2 \sigma_{01} \sqrt{\pi (m_1 + m_2 )} + 2 \sigma_{12} \sqrt{\pi m_2}
\end{eqnarray}
which represents the total perimeter of a generalized core-shell when $m_1, m_2 > 0$. Then, substituting in \eqref{Gsigma} and \eqref{ezero},
\begin{eqnarray} \label{e0m}
e_0 (m) &=& 2 \sigma_{01} \sqrt{\pi (m_1 + m_2 )} + 2 \sigma_{12} \sqrt{\pi m_2} +   \sum_{i,j=1}^2 \frac{\Gamma_{ij} m_i m_j}{4 \pi}.
\end{eqnarray}
In the case of $m_1 > 0, m_2 = 0$,
\begin{eqnarray} \label{e0ms1}
e_0 (m) =  e_0 (m_1, 0 ) = 2 \sigma_{01} \sqrt{\pi m_1}  +  \frac{\Gamma_{11} m_1^2 }{4 \pi}.
\end{eqnarray}
In the case of $m_1 = 0, m_2 > 0$,
\begin{eqnarray} \label{e0ms2}
e_0 (m) =  e_0 (0, m_2 ) = 2 \sigma_{01} \sqrt{\pi m_2}  + 2 \sigma_{12} \sqrt{\pi m_2} + \frac{\Gamma_{22} m_2^2 }{4 \pi} = 2 \sigma_{02} \sqrt{\pi m_2} + \frac{\Gamma_{22} m_2^2 }{4 \pi}.
\end{eqnarray}
In the above two cases, a core-shell is degenerated to a single bubble.

As in the previous studies of the droplet scaling for binary \cite{bi1} and ternary systems \cite{ablw}, the limiting minimization problem \eqref{ezerobar} is very subtle since the division of the total masses $M = (M_1, M_2)$ is determined by minimization itself.  In particular, although core shells are favored when both constituents $m_1,m_2>0$ are nontrivial, minimizers may exhibit a mixed state of core shells and single bubbles.  In addition, it is reasonable to expect that minimizers of $\overline{e_0}(M)$ will only have finitely many connected components, and that each constituent bubble should have a minimum size; these facts are known for the binary case, and in some parameter regimes, for unweighted ternary systems \cite{bi1,ablw}.
Such results are the goal of this section.

We recall \eqref{ezerobar}
\begin{eqnarray}
\overline{e_0}(M)=
\inf  \left \{ \sum_{k=1}^{\infty}  e_0 (m^k)  : m^k = (m_1^k, m_2^k), \ m_i^k \geq 0, \sum_{k=1}^{\infty} m_i^k = M_i, i = 1, 2 \right \}, \notag
\end{eqnarray}
where $M=(M_1,M_2)$.

First we show that having too small a mass is not energetically advantageous.

\begin{lemma}
	\label{lower bound on the mass of each core-shell}
		There exists a lower bound $m^-$
		depending on 
		\[  M_i, \; \sigma_{01}, \text{ and } \ggm_{ij}, \qquad i,j = 1, 2,  \]
		 such that no single bubble
	or core shell in a minimizing configuration can have total mass less than
	$$m^- :=\frac{  32 \pi^3 \sigma_{01}^2 }{(1+\sqrt{2})^2  (\ggm_{11}+2\ggm_{12}+\ggm_{22})^2  (M_1+M_2)^2}.$$
\end{lemma}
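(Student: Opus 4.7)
The plan is a merging argument by contradiction. Suppose a minimizing configuration $\{m^k\}_{k\in\NN}$ for $\overline{e_0}(M)$ contains a component $m^k=(m_1^k,m_2^k)$ with total mass $|m^k|:=m_1^k+m_2^k<m^-$. I would pick a second component $m^\ell$, $\ell\neq k$ (one must exist unless $M_1+M_2<m^-$, in which case the lemma is vacuous), and relabel so that $a:=|m^k|\leq |m^\ell|=:b$. The competitor replaces $m^k,m^\ell$ by the single merged cluster $m^k+m^\ell=(m_1^k+m_1^\ell,\,m_2^k+m_2^\ell)$; this preserves the mass constraints $\sum_j m_i^j=M_i$, so showing $e_0(m^k+m^\ell)<e_0(m^k)+e_0(m^\ell)$ contradicts minimality.

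For the perimeter part, the explicit formula \eqref{e0m} gives
\[
\Delta P = 2\sigma_{01}\sqrt{\pi}\bigl[\sqrt{a+b}-\sqrt{a}-\sqrt{b}\bigr]
+2\sigma_{12}\sqrt{\pi}\bigl[\sqrt{m_2^k+m_2^\ell}-\sqrt{m_2^k}-\sqrt{m_2^\ell}\bigr].
\]
Both brackets are non-positive by subadditivity of $\sqrt{\cdot}$, so the $\sigma_{12}$ term may be dropped (it only helps). The decisive ingredient is the sharpened subadditivity
\[
\sqrt{a}+\sqrt{b}-\sqrt{a+b}
=\frac{2\sqrt{ab}}{\sqrt{a}+\sqrt{b}+\sqrt{a+b}}
\geq (2-\sqrt{2})\sqrt{a},\qquad 0<a\leq b,
\]
which follows from $\sqrt{a+b}\leq \sqrt{2}\sqrt{b}$, with equality at $a=b$. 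This yields $\Delta P\leq -2\sigma_{01}\sqrt{\pi}\,(2-\sqrt{2})\sqrt{|m^k|}$.

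For the nonlocal part, direct expansion of the quadratic gives
\[
\Delta N=\frac{1}{4\pi}\sum_{i,j=1}^2 \Gamma_{ij}\bigl(m_i^k m_j^\ell+m_i^\ell m_j^k\bigr),
\]
and the crude bounds $m_i^k\leq |m^k|$, $m_j^\ell\leq M_j$ (and their symmetric counterparts) yield
\[
\Delta N\leq \frac{|m^k|\,(M_1+M_2)\,(\Gamma_{11}+2\Gamma_{12}+\Gamma_{22})}{2\pi}.
\]
Combining, $\Delta P+\Delta N<0$ precisely when $|m^k|<16\pi^3\sigma_{01}^2(2-\sqrt{2})^2/[(\Gamma_{11}+2\Gamma_{12}+\Gamma_{22})^2(M_1+M_2)^2]$, and the identity $(2-\sqrt{2})^2=2/(1+\sqrt{2})^2$ converts this bound into exactly the quoted $m^-$. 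The main obstacle is securing the sharp constant $2-\sqrt{2}$: a crude estimate such as $\sqrt{a}+\sqrt{b}-\sqrt{a+b}\geq \tfrac12\sqrt{a}$ would still produce \emph{some} lower bound, but a strictly smaller one than the formula in the statement. Once the sharpened subadditivity is in place, and with the observation that $e_0$ in \eqref{e0m} applies uniformly to single bubbles ($m_1=0$ or $m_2=0$) and to core shells, the rest of the argument is routine algebra.
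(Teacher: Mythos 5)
Your proposal is correct and follows essentially the same route as the paper: a merge-two-components comparison, dropping the nonpositive $\sigma_{12}$ term, the identity $\sqrt{a}+\sqrt{b}-\sqrt{a+b}=2\sqrt{ab}/(\sqrt{a}+\sqrt{b}+\sqrt{a+b})\ge(2-\sqrt{2})\sqrt{a}$ for $a\le b$ (the paper writes the constant as $2/(2+\sqrt 2)$), and the bound $\sum_{i,j}\Gamma_{ij}(m_i^km_j^\ell+m_i^\ell m_j^k)\le 2(\Gamma_{11}+2\Gamma_{12}+\Gamma_{22})(M_1+M_2)\,a$, yielding exactly the stated $m^-$. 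The only quibble is your parenthetical that the lemma is ``vacuous'' when only one component exists; the paper likewise dismisses that case as trivial, so this is not a substantive divergence.
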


\begin{proof}
	Consider an arbitrary minimizing configuration. If there is only one bubble (single bubble or core shell)
	then the thesis is trivial. 
%
%
Pick two bubbles  (single bubble or core shell) whose
	masses are $m_i^k$, $i,k=1,2$, where the index $i$ denotes the constituent type, and
	$m_1^k$ (resp. $m_2^k$) is the ``outer'' (resp. ``inner'') shell.
We allow for
 $m_i^k= 0$ corresponding to the case of a single bubble.
	Their
	energy
	contribution is thus
	\begin{align}
	\label{energy contribution separate}
	\sum_{k=1}^2 2\sqrt{\pi} \Big[\sigma_{01}  \sqrt{m_1^k +m_2^k}
	+\sigma_{12}\sqrt{ m_2^k} \Big] + \sum_{i,j,k=1}^2 \frac{\ggm_{ij}}{4\pi} m_i^k m_j^k.
	\end{align}
	Combining them into a bubble, whose inner disk
	(resp. outer shell)
	has mass $m_2^1 +m_2^2$ (resp. $m_1^1 +m_1^2$), then the energy contribution
	becomes
	\begin{align}
	\label{energy contribution combined}
	2\sqrt{\pi} \Bigg[\sigma_{01}  \sqrt{\sum_{k=1}^2 (m_1^k +m_2^k) }+\sigma_{12}
	\sqrt{ \sum_{k=1}^2 m_2^k } \Bigg] + \sum_{i,j=1}^2 \frac{\ggm_{ij}}{4\pi} \Big(  \sum_{k=1}^2 m_i^k\Big)
	\Big(  \sum_{k=1}^2 m_j^k\Big).
	\end{align}
%
%
	Denote $m^k:=m_1^k+m_2^k$, $k=1,2$. By
	 subtracting
	\eqref{energy contribution separate} from
	 \eqref{energy contribution combined} we get
	 \begin{align*}
	2\sqrt{\pi} \Bigg[\sigma_{01} & \sqrt{\sum_{k=1}^2 (m_1^k +m_2^k) }+\sigma_{12}
\sqrt{ \sum_{k=1}^2 m_2^k } \Bigg] + \sum_{i,j=1}^2 \frac{\ggm_{ij}}{4\pi} \Big(  \sum_{k=1}^2 m_i^k\Big)
\Big(  \sum_{k=1}^2 m_j^k\Big)\\
&-\sum_{k=1}^2 2\sqrt{\pi} \Big[\sigma_{01}  \sqrt{m_1^k +m_2^k}
+\sigma_{12}\sqrt{ m_2^k} \Big] - \sum_{i,j,k=1}^2 \frac{\ggm_{ij}}{4\pi} m_i^k m_j^k \\
& = 
	2\sqrt{\pi} \sigma_{01} [\sqrt{m^1 +m^2 } -   \sqrt{m^1}-\sqrt{m^2} ]   + 2\sqrt{\pi} \sigma_{12} [\sqrt{m_2^1 +m_2^2 } -   \sqrt{m_2^1}-\sqrt{m_2^2} ] \\
	& \quad +\frac{1}{2\pi}
	\Big[  \ggm_{11} m_1^1 m_1^2  +\ggm_{12} (  m_1^1  m_2^2 + m_1^2m_2^1  
	) + \ggm_{22} m_2^1m_2^2 \Big].
	 \end{align*}
By the optimality of our initial configuration, we need the above term to be nonnegative, i.e.
	 \begin{align}
	 0 & \le 2\sqrt{\pi} \sigma_{01} [\sqrt{m^1 +m^2 } -   \sqrt{m^1}-\sqrt{m^2} ] + 2\sqrt{\pi} \sigma_{12} [\sqrt{m_2^1 +m_2^2 } -   \sqrt{m_2^1}-\sqrt{m_2^2} ] \notag \\
	& \quad +\frac{1}{2\pi}
	 \Big[  \ggm_{11} m_1^1 m_1^2  +\ggm_{12} (  m_1^1  m_2^2 + m_1^2m_2^1  
	 ) + \ggm_{22} m_2^1m_2^2 \Big].
	 \label{difference must me nonnegative}
	 \end{align}
Thus,
\[ 0  \leq  2\sqrt{\pi} \sigma_{01} [\sqrt{m^1 +m^2 } -   \sqrt{m^1}-\sqrt{m^2} ] + \frac{1}{2\pi}
	 \Big[  \ggm_{11} m_1^1 m_1^2  +\ggm_{12} (  m_1^1  m_2^2 + m_1^2m_2^1  
	 ) + \ggm_{22} m_2^1m_2^2 \Big] \]
Assume without loss of generality that $m^1\ge m^2$. 
Note that
	 \begin{align*}
  \sqrt{m^1}+\sqrt{m^2} -	 	 \sqrt{m^1 +m^2 }  & =
  \frac{ 2 \sqrt{m^1 m^2 }}{  \sqrt{m^1}+\sqrt{m^2} +	\sqrt{m^1 +m^2 } }
  \ge \frac{2}{2+\sqrt{2}  } \sqrt{m^2}, 
	 \end{align*}
	 and
	 \begin{align*}
	  \ggm_{11} m_1^1 m_1^2  +\ggm_{12} (  m_1^1  m_2^2 + m_1^2m_2^1  
	 ) + \ggm_{22} m_2^1m_2^2
	 &\le  \ggm_{11} m^1 m^2  +\ggm_{12} (  m^1  m^2 + m^2 m^1 
	 ) + \ggm_{22} m^1 m^2\\
	 &=(\ggm_{11}+2\ggm_{12}+\ggm_{22})  m^1 m^2\\
& \leq (\ggm_{11}+2\ggm_{12}+\ggm_{22})  (M_1 + M_2 ) m^2
	 \end{align*}
	 Thus \eqref{difference must me nonnegative} becomes
	 \begin{align*}
0&\le -\frac{  4 \sqrt{\pi} \sigma_{01} }{2+\sqrt{2}  } \sqrt{m^2} +
\frac{1}{2\pi} (\ggm_{11}+2\ggm_{12}+\ggm_{22}) ( M_1 + M_2 ) m^2\\
&\Lra m^2 \ge 
\frac{  32 \pi^3 \sigma_{01}^2 }{(1+\sqrt{2})^2  (\ggm_{11}+2\ggm_{12}+\ggm_{22})^2  (M_1+M_2)^2},
	 \end{align*}
	 concluding the proof since $m^1 \geq m^2 $.
\end{proof}

\begin{corollary}(Finiteness)\label{finiteness}
For any $M= (M_1, M_2), \ M_1, M_2 > 0$, a minimizing configuration for $\overline{e_0}(M)$ has finitely many nontrivial components.  That is, there exist $ N<\infty$ and pairs $m^1,\dots, m^N$, with $m^k=(m_1^k,m_2^k)\neq (0,0)$, for which $\overline{e_0}(M)=\sum_{k=1}^N e_0(m^k)$.
\end{corollary}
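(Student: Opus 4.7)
The plan is to observe that Lemma \ref{lower bound on the mass of each core-shell} directly forces a lower bound $m^-$ on the total mass $m_1^k + m_2^k$ of each nontrivial component of a minimizing configuration (assuming at least two nontrivial components). Indeed, applying the lemma to any pair $(m^k, m^\ell)$ of distinct nontrivial components shows that the smaller of the two has total mass $\geq m^-$; fixing the heaviest component and letting the partner vary, every other component inherits this bound. Combined with the constraint $\sum_k (m_1^k + m_2^k) = M_1 + M_2 < \infty$, this immediately bounds the number of nontrivial components by $\lfloor (M_1 + M_2)/m^- \rfloor$, proving finiteness once a minimizer is known to exist.

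What remains is to ensure that a minimizing configuration actually exists. I would use the merging device already present in the proof of Lemma \ref{lower bound on the mass of each core-shell}: the computation leading to \eqref{difference must me nonnegative} shows that whenever a configuration contains two components whose smaller total mass is strictly below $m^-$, replacing them by a single combined component (with combined inner and outer masses) strictly decreases $\sum_k e_0(m^k)$. Starting from a minimizing sequence $\{\mathbf{m}^n\}_{n \in \NN}$ and applying this merging iteratively to each $\mathbf{m}^n$, I produce a new minimizing sequence in which every element has all nontrivial components of mass $\geq m^-$, and hence at most $N_{\max} := \lfloor (M_1 + M_2)/m^- \rfloor$ of them.

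I would then pass to a subsequence with a fixed number $N \leq N_{\max}$ of nontrivial components, viewed as points in the compact set $\{(m^1, \ldots, m^N) \in [0, M_1+M_2]^{2N} : m_1^k + m_2^k \geq m^-, \ \sum_k m_i^k = M_i, \ i=1,2\}$. Continuity of $e_0$ on $\{m : m_1 + m_2 \geq m^-\}$, which is clear from \eqref{e0m}, lets me pass to the limit in $\sum_{k=1}^N e_0(m^{k,n})$, producing a minimizer with at most $N$ components. The only nontrivial step---strict energy decrease under merging---is essentially free, being built into the computation in Lemma \ref{lower bound on the mass of each core-shell}; the remaining compactness argument is routine.
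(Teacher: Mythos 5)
Your first paragraph is precisely the paper's proof: Lemma~\ref{lower bound on the mass of each core-shell} gives the uniform lower bound $m^-$ on the total mass of every nontrivial component of a minimizer, and the constraint $\sum_k (m_1^k+m_2^k)=M_1+M_2$ then caps the number of components by $(M_1+M_2)/m^-$. So on the statement as the paper proves it, you take essentially the same route.

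Where you diverge is that you also try to establish \emph{existence} of a minimizer of $\overline{e_0}(M)$, which the paper's two-line proof silently assumes (the corollary's ``that is'' clause does assert attainment, so this is a legitimate concern and your instinct to address it is good). Your merging-plus-compactness scheme is the right idea, and the strict-decrease claim is indeed extractable from the computation in Lemma~\ref{lower bound on the mass of each core-shell}: the chain of inequalities there shows, by contraposition, that if the smaller of two components has total mass below $m^-$ then combining them strictly lowers $\sum_k e_0(m^k)$. The one step you should tighten is ``applying this merging iteratively'': an element of a minimizing sequence may have \emph{countably} many components below the threshold, and a sequence of pairwise merges need not terminate, nor is it obvious that the intermediate merged component stays comparable to its partners in the way the lemma's estimate requires (the estimate uses $m^1\le M_1+M_2$, which is fine, but you must track that each individual merge is still energy-decreasing once the accumulated lump exceeds $m^-$ --- at that point the lemma no longer guarantees a decrease). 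A cleaner fix is to merge all sub-threshold components into a single one in one step and verify directly, using subadditivity of $\sqrt{\cdot}$ and the quadratic bound on the interaction, that this does not increase the energy by more than an error controlled by the total mass of the small components; or simply to run the compactness argument on near-minimizers with a fixed finite truncation and use lower semicontinuity of $e_0$. With that repair your existence argument goes through; the counting part needs no change.
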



\begin{proof}
	Lemma \ref{lower bound on the mass of each core-shell} gives that each single bubble / core shell
	must have total mass at least $m^-$. Since the total combined mass of types I and II constituents
	is $M_1+M_2$, we have
	\[  \# \text{core shells}  +\# \text{single bubbles}
	\le\frac{M_1+M_2}{m^-} =:N . \]
\end{proof}

%
%

\begin{lemma}
	There exist computable lower bounds $m^-_i$, 
	depending on 
\[   M_i, \sigma_{01}, \sigma_{12}, \text{ and } \ggm_{ij},\qquad i,j=1,2, \]
	such that any bubble of type $i$ constituent, be it a single bubble,
	or lobe in a core shell, in a minimizing configuration must have mass at least
$m^-_i$.
\end{lemma}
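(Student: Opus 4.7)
The plan is to adapt the merging strategy of Lemma~\ref{lower bound on the mass of each core-shell}, but with modifications tailored to a single constituent type at a time. Suppose in a minimizing configuration there is a bubble $k$ with $0<m_i^k<m_i^-$ for a threshold $m_i^-$ to be determined. I will construct a competitor of strictly smaller energy, the threshold emerging with explicit dependence on $M_1,M_2,\sigma_{01},\sigma_{12}$, and $\Gamma_{ij}$.

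For the type~$2$ bound, select a partner bubble $k'$ with $m_2^{k'}\ge m_2^k$; such a $k'$ exists by pigeonhole unless $k$ is the unique bubble carrying type~$2$ mass, in which case $m_2^k=M_2$ and no bound is required. The competitor is obtained by transferring the inner disk of bubble $k$ entirely into bubble $k'$: bubble $k$ collapses into a single type~$1$ bubble of mass $m_1^k$ (or vanishes altogether), while bubble $k'$ becomes a core shell of inner mass $m_2^k+m_2^{k'}$. The $\sigma_{12}$-weighted inner perimeter yields a saving of order $\sqrt{m_2^k}$ via
\[
\sqrt{m_2^k+m_2^{k'}}-\sqrt{m_2^k}-\sqrt{m_2^{k'}}\le-\frac{2\sqrt{m_2^k}}{2+\sqrt{2}},
\]
while the changes in the $\sigma_{01}$-perimeter and in the nonlocal interaction are both of order $m_2^k$, their coefficients bounded explicitly using $m_1^k+m_2^k\ge m^-$ from Lemma~\ref{lower bound on the mass of each core-shell}. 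For $m_2^k$ below an explicit threshold the $\sqrt{m_2^k}$ saving dominates, contradicting minimality.

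For the type~$1$ bound, the analogous operation is to strip the entire outer shell of a thin core shell $k$, turning it into a single type~$2$ bubble of mass $m_2^k$, and transfer $m_1^k$ into a partner bubble $k'$. The identity $\sigma_{02}=\sigma_{01}+\sigma_{12}$ causes the $\sigma_{12}$ perimeter contributions to cancel exactly, so the local change reduces to
\[
2\sqrt{\pi}\sigma_{01}\bigl[\sqrt{m_2^k}+\sqrt{m_1^k+m_1^{k'}+m_2^{k'}}-\sqrt{m_1^k+m_2^k}-\sqrt{m_1^{k'}+m_2^{k'}}\bigr].
\]
Expanded in small $m_1^k$ this is $\sqrt{\pi}\sigma_{01}\,m_1^k\bigl[(m_1^{k'}+m_2^{k'})^{-1/2}-(m_2^k)^{-1/2}\bigr]+O((m_1^k)^2)$, with a companion nonlocal change also linear in $m_1^k$ and coefficient bounded by $(\Gamma_{11}M_1+\Gamma_{12}(M_1+M_2))/(2\pi)$. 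Selecting $k'$ so that $m_1^{k'}+m_2^{k'}>m_2^k$, which is possible by invoking the already-established type~$2$ bound $m_2^k\ge m_2^-$ together with Corollary~\ref{finiteness} and the mass constraint $\sum_k(m_1^k+m_2^k)=M_1+M_2$, makes the leading linear coefficient strictly negative. For $m_1^k$ below the resulting threshold $m_1^-$ the total change is negative, contradicting minimality.

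The main obstacle is the type~$1$ case: because the savings are only linear rather than of order $\sqrt{m_1^k}$, the argument hinges on the existence of a partner bubble with total mass exceeding $m_2^k$. A careful case analysis is needed to guarantee this in any candidate minimizer, particularly when the bubble $k$ itself carries a disproportionate share of the type~$2$ mass. The resulting threshold $m_1^-$ will depend on $M_i,\sigma_{ij},\Gamma_{ij}$ and on the previously derived bounds $m^-$ and $m_2^-$.
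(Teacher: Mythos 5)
Your type-2 bound is sound and is essentially the paper's Case~1 in a slightly different dress: the paper rebalances by sending the small type-II mass one way and an equal amount of type-I mass back, so each bubble keeps its total mass and the $\sigma_{01}$ outer perimeters do not move at all, whereas you leave the totals unbalanced and instead bound the resulting $O(m_2^k)$ change in the $\sigma_{01}$ term using $m_1^k+m_2^k\ge m^-$ from Lemma~\ref{lower bound on the mass of each core-shell}. Either way the $\sigma_{12}$ inner perimeter supplies the decisive gain of order $\sqrt{m_2^k}$ against losses that are linear in $m_2^k$, so this half is fine.

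The type-1 half has a genuine gap, exactly where you flag it. Your competitor only wins if there is a partner $k'$ with $m_1^{k'}+m_2^{k'}$ larger than $m_2^k$, and in fact larger by a quantified margin: the nonlocal change is also linear in $m_1^k$ with coefficient bounded only in absolute value by an explicit constant, so a leading coefficient $\sqrt{\pi}\sigma_{01}\bigl[(m_1^{k'}+m_2^{k'})^{-1/2}-(m_2^k)^{-1/2}\bigr]$ that is merely negative does not yield a threshold $m_1^-$ depending only on $M_i,\sigma_{ij},\Gamma_{ij}$; it yields one depending on the unknown configuration. Worse, the required partner can simply fail to exist: consider a candidate minimizer consisting of one core shell carrying all of $M_2$ in its inner disk with a thin type-I annulus, plus a single type-I bubble, with $M_2>M_1$; then every other bubble has total mass at most $M_1<m_2^k$. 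This is precisely the residual case the paper treats by a different method: in its Case~2 it writes the two-bubble energy as an explicit function $f_1(\eta)$ of the annulus mass $\eta$, notes that an interior optimum must satisfy $f_1'(\eta)=0$, and converts that equation into an explicit eighth-degree polynomial whose smallest positive root (a computable function of $M_i,\sigma_{ij},\Gamma_{ij}$) furnishes the lower bound. Your proposal needs this, or some replacement for it, to be complete; ``a careful case analysis is needed'' leaves open exactly the hardest case.
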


\begin{proof}
 If type $i$ constituent, $i=1,2$, is entirely in one 
 bubble, then 
   the lower bound on $m_i^-$ is automatically true. 
%
The proof is slightly different between the cases where there are at least two core shells, and where there is only a single core shell with
the rest being single bubbles.

\medskip

{\em Case 1: at least two core shells.}
Denote by
$m_i^k$, $k=1,2$ the masses of their lobes. 
By Lemma \ref{lower bound on the mass of each core-shell} we have $m_1^k+m_2^k \ge m^-$, $k=1,2$, and assume that
$m_2^2$ is the smallest (for the other scenarios, the proof is similar). 
 We need to show that $m_2^2$ cannot be both too small. The energy contribution
of these two bubbles is 
\begin{align}
\label{energy contribution separate 2}
\sum_{k=1}^2 2\sqrt{\pi} \Big[\sigma_{01}  \sqrt{m_1^k +m_2^k}
+\sigma_{12}\sqrt{ m_2^k} \Big] + \sum_{i,j,k=1}^2 \frac{\ggm_{ij}}{4\pi} m_i^k m_j^k.
\end{align}
Now we do the following construction: move the entire bubble of type II constituent, with mass $m_2^2$, from the second bubble to the first, 
Then take the equivalent amount $m_2^2$ of type I constituent from the first bubble and move it to the second.
Therefore, this construction a core shell with inner disk (resp. outer
shell) of mass $m_2^1+m_2^2$ (resp. $m_1^1-m_2^2  $), and 
a single bubble of type I constituent of mass $m_1^2+m_2^2  $. Their energy contribution is thus
\begin{align}
2\sqrt{\pi}  \Big( \sigma_{01}\sqrt{m_1^1 +m_2^1} + \sigma_{12}
\sqrt{m_2^1+m_2^2} \Big) &+\frac{\ggm_{11}(m_1^1-m_2^2)^2
	+2\ggm_{12}(m_1^1-m_2^2)(m_2^1+m_2^2)+
	\ggm_{22} (m_2^1+m_2^2)^2  }{4\pi} \notag\\
&+2\sqrt{\pi} \sigma_{01} \sqrt{m_1^2+m_2^2  }
+\frac{\ggm_{11} (m_1^2+m_2^2)^2 }{4\pi}.
\label{energy contribution perturbed 2}
\end{align}
Subtracting \eqref{energy contribution perturbed 2} from \eqref{energy contribution separate 2}
gives
\begin{align*}
2\sqrt{\pi} \sigma_{12} &\Big[\sqrt{m_2^1}   +\sqrt{m_2^2}   -  \sqrt{m_2^1+m_2^2}    \Big] \\
&  -  \frac{1}{2\pi} \bigg[  \ggm_{11} ( { m_1^2 - m_1^1  + m_2^2 } ) 
+\ggm_{12} ( m_1^1 -  m_2^1 - m_1^2  - m_2^2     )  + \ggm_{22}  m_2^1 
\bigg] m_2^2,
\end{align*}
and due to the optimality of our initial configuration, such a difference must be negative. Thus
we need
\begin{align*}
0&\ge 2\sqrt{\pi} \sigma_{12} \Big[\sqrt{m_2^1}   +\sqrt{m_2^2}   -  \sqrt{m_2^1+m_2^2}    \Big] 
 -\frac{1}{2\pi} \bigg[  \ggm_{11} ( m_1^2 -m_1^1 +m_2^2  ) 
+\ggm_{12} ( m_1^1 -m_2^1 -m_1^2  - m_2^2  )  + \ggm_{22}  m_2^1 
\bigg] m_2^2\\
&\ge
2\sqrt{\pi} \sigma_{12} \frac{2 \sqrt{m_2^1m_2^2}  }{\sqrt{m_2^1}   +\sqrt{m_2^2}   + \sqrt{m_2^1+m_2^2}}
-\frac{1}{2\pi} \Big[ \ggm_{11}  ( m_1^2+m_2^2) +\ggm_{12}  m_1^1+  \Gamma_{22} m_2^1\Big] m_2^2,
\end{align*}
and using 
\[ m_2^1\ge m_2^2 \]
the previous line gives
\begin{align*}
0&\ge  2\sqrt{\pi} \sigma_{12} \frac{2 \sqrt{m_2^1m_2^2}  }{\sqrt{m_2^1}   +\sqrt{m_2^2}   + \sqrt{m_2^1+m_2^2}}
-\frac{1}{2\pi} \Big[ \ggm_{11} ( m_1^2+m_2^2) +\ggm_{12}  m_1^1 + \Gamma_{22} m_2^1 \Big] m_2^2\\
&\ge
   \frac{4 \sqrt{\pi}\sigma_{12}  }{ (2+\sqrt{2})  }\sqrt{m_2^2}
-\frac{1}{2\pi} \Big[ \ggm_{11}  (M_1 + M_2) +\ggm_{12} M_1 + \ggm_{22} M_2  \Big] m_2^2.
\end{align*}
Therefore, for such inequality to hold, we need
\[ \frac{1}{2\pi} \Big[\ggm_{11}  (M_1 + M_2) +\ggm_{12} M_1 + \ggm_{22} M_2 \Big] \sqrt{m_2^2}
\ge   \frac{4 \sqrt{\pi}\sigma_{12}  }{ (2+\sqrt{2})  },
 \]
and the proof of this case is complete.

\medskip
{\em Case 2: only one core shell.}
Using the same arguments from \cite{bi1}, we may conclude that
    all single bubbles of the same type constituent have the same mass.
 Several cases are possible.
\begin{enumerate}
    \item 
    If there are two
single bubbles of type $i$ constituent, both with mass $m^k$,
then combining them into one single bubble changes the energy by
\begin{align*}
    2\sqrt{\pi} \sigma_{0i} & [\sqrt{2m^k}-2\sqrt{m^k} ]
    -\frac{\ggm_{ii} }{2\pi} [2|m^k|^2-|2m^k|^2]
    =
    -2\sqrt{\pi}(2-\sqrt{2}) \sigma_{0i} \sqrt{m^k}
    +\frac{\ggm_{ii} }{\pi} |m^k|^2.
\end{align*}
Such change cannot be negative, as it would contradict the optimality
of the initial configuration, hence a necessary condition
is
\[2\pi\sqrt{\pi}(2-\sqrt{2}) \sigma_{0i} \ggm_{ii}^{-1}
    \le  |m^k|^{3/2} ,\]
thus prohibiting $m^k$ from being too small. 
Then, by noting that
the Euler-Lagrange equation contain terms that diverge when a mass
gets too small, we obtain a computable lower
on the masses.  

\item If there is only one other single bubble, 
then the entire configuration is made of a core shell and a single bubble.

	We first show that, in the core-shell, the type I constituent is 
	forming the outer annulus
	contacting the background, while the type II constituent is always concentrated in a ball. 
To this aim, we need to compare the energies of the following configurations:
\begin{enumerate}
	\item a single bubble (of mass $m_3$, and whatever constituent type), plus a core-shell where the type I constituent (of mass $m_1$) is 
	forming the outer annulus and the type II constituent (of mass $m_2$) is concentrated in a ball.
	
	\item The same single bubble (of mass $m_3$, and whatever constituent type), plus a core-shell where the type II constituent (of mass $m_2$) is 
	forming the outer annulus and the type I constituent (of mass $m_1$) is concentrated in a ball.
\end{enumerate}
The energy contribution of the single bubble is the same in both cases, as well as the interaction terms. Thus only the (weighted) perimeters of the core-shells are different:
in Case (a), this is equal to
\begin{align}
\label{energy case a}
 2\sqrt{\pi} ( \sigma_{12}\sqrt{m_2} + \sigma_{01}\sqrt{m_1+m_2}), 
\end{align}
while in Case (b), this is equal to	
\[ 2\sqrt{\pi} ( \sigma_{12}\sqrt{m_1} + \sigma_{02}\sqrt{m_1+m_2}), \]
which, since $\sigma_{02} = \sigma_{01} +\sigma_{12} $, is equal to
\begin{align}
\label{energy case b}
 2\sqrt{\pi} ( \sigma_{12}\sqrt{m_1} + \sigma_{01}\sqrt{m_1+m_2}  +\sigma_{12}\sqrt{m_1+m_2} ). 
\end{align}
It is clear that \eqref{energy case b} is always larger than \eqref{energy case a}, hence
Case (b) is energetically unfavorable.

\bigskip

Thus we are left with two cases to consider:
\begin{enumerate}
	\item a single bubble (of type I constituent and mass $M_1-\eta$), plus a core-shell where the type I constituent (of mass $\eta$) is 
	forming the outer annulus and the type II constituent (of mass $M_2$) is concentrated in a ball.
	The energy is thus
	\begin{align*}
	f_1(\eta) & = 2\sqrt{\pi} [  \sigma_{12} \sqrt{M_2} + \sigma_{01} \sqrt{M_2+\eta} 
	+\sigma_{01} \sqrt{M_1-\eta} 
	]  \\
	&\qquad+
	\frac{1}{4\pi} \Big[  \Gamma_{11}(  (M_1-\eta)^2+\eta^2 ) +2\Gamma_{12}M_2\eta
	+\Gamma_{22}M_2^2 \Big],
	\end{align*}
	and 
	\begin{align*}
	f_1'(\eta) & = \sigma_{01}\sqrt{\pi} \Big(  \frac{1}{ \sqrt{M_2+\eta} }  
-\frac{1}{ \sqrt{M_1-\eta} }
	\Big)   +\frac{1}{2\pi} \Big[  \Gamma_{11}(  2\eta-M_1 ) +\Gamma_{12}M_2
 \Big].
	\end{align*}
	Since $	f_1'(\eta)\to -\infty$ as $\eta \nearrow M_1$, we conclude that the single ball 
	cannot be too small. In order to prevent critical configurations arising for very small (yet positive)
	$\eta$, we notice that the (algebraic) equation
	\begin{align*}
0=f_1'(\eta) & = \sigma_{01}\sqrt{\pi} \Big(  \frac{1}{ \sqrt{M_2+\eta} }  
-\frac{1}{ \sqrt{M_1-\eta} }
\Big)   +\frac{1}{2\pi} \Big[  \Gamma_{11}(  2\eta-M_1 ) +\Gamma_{12}M_2
\Big]
\end{align*}
can be transformed in some 8th order polynomials by repeatedly taking the squares:
\begin{align}
0=f_1'(\eta) & = \sigma_{01}\sqrt{\pi} \Big(  \frac{1}{ \sqrt{M_2+\eta} }  
-\frac{1}{ \sqrt{M_1-\eta} }
\Big)   +\frac{1}{2\pi} \Big[  \Gamma_{11}(  2\eta-M_1 ) +\Gamma_{12}M_2
\Big]
\notag\\
&\Lra  
\frac{1}{ \sqrt{M_1-\eta} }-  \frac{1}{ \sqrt{M_2+\eta} }  =
   \frac{1}{2\pi\sigma_{01}\sqrt{\pi}} \Big[  \Gamma_{11}(  2\eta-M_1 ) +\Gamma_{12}M_2
\Big]
\notag\\
&\Lra  
\sqrt{M_2+\eta}-\sqrt{M_1-\eta}=
\frac{\sqrt{(M_1-\eta) (M_2+\eta) }}{2\pi\sigma_{01}\sqrt{\pi}} \Big[  \Gamma_{11}(  2\eta-M_1 ) +\Gamma_{12}M_2
\Big]
\notag\\
&\Lra  M_2+M_1 -2\sqrt{(M_1-\eta) (M_2+\eta) }
=
\frac{(M_1-\eta) (M_2+\eta) }{4\pi^3\sigma_{01}^2} \Big[  \Gamma_{11}(  2\eta-M_1 ) +\Gamma_{12}M_2
\Big]^2
\notag\\
&\Lra   4(M_1-\eta) (M_2+\eta) 
=\bigg[
\frac{(M_1-\eta) (M_2+\eta) }{4\pi^3\sigma_{01}^2} \Big[  \Gamma_{11}(  2\eta-M_1 ) +\Gamma_{12}M_2
\Big]^2-(M_2+M_1) \bigg]^2
\label{8th order algebraic equation}
\end{align}
As such, any solution of $f_1'(\eta)=0$ is also root of 
\eqref{8th order algebraic equation},
an 8th order polynomial,
where the coefficients depend only on $M_i$, $\Gamma_{ij}$, $\sigma_{ij}$. Therefore,
the smallest (positive) root $\eta_*$ depends only on $M_i$, $\Gamma_{ij}$, $\sigma_{ij}$,
and no other critical configuration with $0<\eta<\eta_*$ can exist.
	
		\item A single bubble (of type II constituent and mass $M_2-\zeta$), plus a  core-shell where the type I constituent (of mass $M_1$) is 
	forming the outer annulus and the type II constituent (of mass $\zeta$) is concentrated in a ball.
		The energy is thus
	\begin{align*}
	f_2(\eta) & = 2\sqrt{\pi} [  \sigma_{12} \sqrt{\eta} + \sigma_{01} \sqrt{M_1+\eta} 
	+\sigma_{02} \sqrt{M_2-\eta} 
	]  \\
	&\qquad+
	\frac{1}{4\pi} \Big[  \Gamma_{22}(  (M_2-\eta)^2+\eta^2 ) +2\Gamma_{12}M_1\eta
	+\Gamma_{11}M_1^2 \Big],
	\end{align*}
	and 
	\begin{align*}
	f_2'(\eta) & = \sqrt{\pi} \Big(  \frac{\sigma_{01}  }{ \sqrt{M_1+\eta} }  
	+	\frac{\sigma_{12}}{ \sqrt{\eta} }
	-\frac{\sigma_{02}}{ \sqrt{M_2-\eta} }
	\Big)   +\frac{1}{2\pi} \Big[  \Gamma_{22}(  2\eta-M_2 ) +\Gamma_{12}M_1
	\Big].
	\end{align*}
	Since
	\[ \lim_{\eta\to 0} f_2'(\eta) =+\8,\qquad 
	 \lim_{\eta\to M_2} f_2'(\eta) =-\8,\]
	 this means that there 
	 exists $\eta_0>0$, depending only on $M_i$, $\sigma_{ij}$, $\Gamma_{ij}$, such that
	 there
	 cannot be any stable configurations (and thus no optimal ones) if
	 $\eta<\eta_0$ or $\eta>M_2-\eta_0$. Consequently, this means that 
	 neither the single bubble, nor
	 any lobes of the core-shell, can have too small a mass.	 
\end{enumerate}

\end{enumerate}
Combining all the above cases concludes the proof.
\end{proof}


\begin{lemma} \label{one-type-single}
For any $M = (M_1, M_2), M_1, M_2 > 0$, when $\sigma_{02}   =  \sigma_{01}  + \sigma_{12}$ and $\Gamma_{12} = 0$, a minimizing configuration for $\overline{e_0}(M)$ can not have two different type of single bubbles.
\end{lemma}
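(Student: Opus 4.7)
The plan is to proceed by contradiction and exploit the subadditivity of the square root together with the fact that $\Gamma_{12}=0$ kills the cross-interaction cost that would otherwise be created by merging two species into a core shell.

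Suppose a minimizing configuration contains both a single bubble of type $1$ with mass $a>0$ and a single bubble of type $2$ with mass $b>0$. I would build a competitor by removing these two single bubbles and replacing them with a single core shell in $\mathcal{C}_b^a$ (inner disk of type $2$ mass $b$, outer annulus of type $1$ mass $a$), positioned at some fresh location in $\TT$ disjoint from the remaining droplets. The total masses $M_1,M_2$ are preserved, and all other components of the cluster are untouched.

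Next I would compare the contributions of these affected pieces to $\sum_k e_0(m^k)$. Before the swap, using \eqref{e0ms1}--\eqref{e0ms2}, they contribute
\[
2\sigma_{01}\sqrt{\pi a}+\frac{\Gamma_{11}a^2}{4\pi}+2\sigma_{02}\sqrt{\pi b}+\frac{\Gamma_{22}b^2}{4\pi}.
\]
After the swap, the new core shell contributes, from \eqref{e0m},
\[
2\sigma_{01}\sqrt{\pi(a+b)}+2\sigma_{12}\sqrt{\pi b}+\frac{\Gamma_{11}a^2}{4\pi}+\frac{\Gamma_{22}b^2}{4\pi}+\frac{2\Gamma_{12}ab}{4\pi}.
\]
Since $\Gamma_{12}=0$ the cross-term vanishes, and since $\sigma_{02}=\sigma_{01}+\sigma_{12}$ the $\sigma_{12}\sqrt{\pi b}$ terms cancel. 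The difference (after minus before) reduces to
\[
2\sigma_{01}\sqrt{\pi}\bigl[\sqrt{a+b}-\sqrt{a}-\sqrt{b}\bigr],
\]
which is strictly negative by the strict subadditivity of the square root on positive reals. This contradicts the minimality of the original configuration and proves the lemma.

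The argument is genuinely short because two of the three things that could have changed turn out to be neutral: the self-interactions depend only on the total mass of each species, which is conserved, and the potentially dangerous type-$1$/type-$2$ cross-interaction is switched off by the hypothesis $\Gamma_{12}=0$. The one point to be slightly careful about is the admissibility of the competitor, namely that the new core shell can be placed so as to remain a valid element of the infinite-sum formulation of $\overline{e_0}(M)$; this is immediate since $\overline{e_0}$ is defined purely in terms of the sequence of masses $\{m^k\}$ and imposes no geometric constraint among distinct clusters, so no obstruction arises.
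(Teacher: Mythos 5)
Your proposal is correct and follows essentially the same route as the paper: replace the two single bubbles by a core shell with the same component masses, observe that the hypotheses $\Gamma_{12}=0$ and $\sigma_{02}=\sigma_{01}+\sigma_{12}$ make the interaction and $\sigma_{12}$ terms cancel, and conclude from the strict subadditivity $\sqrt{a+b}<\sqrt{a}+\sqrt{b}$ that the swap strictly lowers $\sum_k e_0(m^k)$, contradicting minimality. Your closing remark about admissibility is a correct (if unnecessary in the paper's purely mass-based formulation of $\overline{e_0}$) observation.
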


\begin{proof}
Assume in a minimizing configuration for $\overline{e_0}(M)$, there are two single bubbles $(m_1^k, 0)$ and $(0, m_2^l)$. Then replacing these two single bubbles by a core shell $(m_1^k, m_2^l)$ will reduce the energy $ \sum_{k=1}^{\infty}  e_0 (m^k)$ since
\begin{eqnarray*}
 &&e_0 (m_1^k, 0 )  +  e_0 (0, m_2^l )  -   e_0 (m_1^k, m_2^l ) \\
&=& 2 \sigma_{01} \sqrt{\pi m_1^k}  +  \frac{\Gamma_{11} (m_1^k)^2 }{4 \pi} + 2 \sigma_{02} \sqrt{\pi m_2^l} + \frac{\Gamma_{22} (m_2^l)^2 }{4 \pi} \\
&& -  \left [ 2 \sigma_{01} \sqrt{\pi (m_1^k + m_2^l )} + 2 \sigma_{12} \sqrt{\pi m_2^l} +   \frac{\Gamma_{11} (m_1^k)^2 }{4 \pi} + \frac{\Gamma_{22} (m_2^l)^2 }{4 \pi}    \right ] \\
&=&   2 \sigma_{01} \sqrt{\pi } \left [ \sqrt{m_1^k} +  \sqrt{m_2^l} - \sqrt{ (m_1^k + m_2^l )} \right ] > 0 .
\end{eqnarray*}
This contradicts with the minimality.
\end{proof}

{\bf Existence of Core Shells}: Lemma \ref{one-type-single} implies that for any $M = (M_1, M_2), M_1, M_2 > 0$, when $\sigma_{02}   =  \sigma_{01}  + \sigma_{12}$ and $\Gamma_{12} = 0$, any minimizing configuration for $\overline{e_0}(M)$ must contain at least one generalized core shell.


\begin{remark}\rm 
In a binary system \cite{bi1} have shown that all bubbles are of equal size, by using the concavity of $e_{0}(m)$ with respect to the single mass parameter $m$. 
In simulations it appears that the same phenomenon should hold for double bubbles and core shells, that all components of a minimizer of the same type are congruent.  However, unlike the binary case, in ternary systems \eqref{e0m} a direct calculation of the Hessian shows it to be indefinite, and so the observed congruence of forms remains an open question.
\end{remark}


\section{The nonlocal problem in the droplet regime}
\label{nonlocalSec}

To see the effects of the nonlocal interaction we consider the Droplet rescaling, as in Choksi-Peletier (\cite{bi1}; see also \cite{ablw,ABCT2}.)  

\subsection{The Droplet Regime}

We take a periodic domain $\mathbb{T}^2$.
In the remainder of the paper, we assume that $\sigma_{ij}$ satisfies \eqref{triangle}, and so the perimeter term may be expressed in terms of total variation norms.
 Let  $u_i=\chi_{\Om_i}$, $i=1,2$.  As remarked upon earlier, while we must keep track of the perimeter of the exterior domain $\Om_0=( \overline{\Om_1\cup\Om_2 })^c$, the perimeter term is the same for its complement, $\widetilde\Om_0= \overline{\Om_1\cup\Om_2 }.$ Thus, it will be convenient to define
 $$   u_0 = \chi_{\Om_1\cup\Om_2},  $$
as it yields the same total variation,
$$  \int_{\mathbb{T}^2} |\nabla u_0|  = \int_{\mathbb{T}^2} |\nabla (1 -u_1- u_2)|
= \int_{\mathbb{T}^2} |\nabla \chi_{\Om_0}|.  $$
Thus, the local part of the functional may be expressed in terms of the two ``small'' sets $\Om_1,\Om_2$ and the constants $\beta_i\ge 0$, $i=0,1,2$ as:
\begin{eqnarray} P_\sigma(\Om_1,\Om_2)=
  \sum_{0 \leq i < j \leq 2} \sigma_{ij}   \mathcal{H}^1 (\partial \Omega_i \cap \partial \Omega_j)
= \frac{1}{2} \sum_{i=0}^2 \beta_i \int_{\mathbb{T}^2} |\nabla u_i | , \notag
\end{eqnarray}
with $\beta_i$ determined by \eqref{betasigma}, and so the energy becomes:
\begin{eqnarray} 
\mathcal{E} (u) =  \frac{1}{2} \sum_{i=0}^2 \beta_i \int_{\mathbb{T}^2} |\nabla u_i |+
  \sum_{i,j = 1}^2  \frac{ \gamma_{ij}}{2} \int_{\mathbb{T}^2} \int_{\mathbb{T}^2} G_{}(x-y)\; u_i (x) \; u_j (y) dx dy. \notag
\end{eqnarray}

\begin{remark}\label{equivrem}\rm
We note throughout that as long as the triangle inequalities \eqref{triangle} hold (even with $\beta_i=0$ in the Core Shell or Single Bubble cases), the weighted perimeter $P_\sigma(\Om_1,\Om_2)$ is equivalent to the standard norm for the cluster in $BV(\TT; \{0,1\})$.  That is, there exists a constant $C=C(\sigma_{ij})$ with 
$$    {1\over C} P_\sigma(\Om_1,\Om_2) \le \sum_{i=0}^2 \int_{\TT} |\nabla\chi_{\Om_i}|
\le C P_\sigma(\Om_1,\Om_2).
$$
In particular, the compactness and convergence proofs done for the unweighted case $\beta_i=1$, $\forall i=0,1,2$, in \cite{ablw} may be carried through to the weighted case with straightforward modifications.
\end{remark}

As in \cite{bi1, ablw},
we introduce a new parameter $\eta$ which is to represent the characteristic length scale of the droplet components.  Thus, areas scale as $\eta^2$, and so we choose mass constraints on $u=(u_1,u_2)$, 
\[   \int_{\mathbb{T}^2}  u_i = \eta^2 M_i \]
for some fixed $M_i$, $i=1, 2$.  We then rescale $u_i$ as
\begin{eqnarray}
v_{i, \eta}^{} = \frac{ u_i }{\eta^2}, \quad i=0,1,2, \quad\text{with}\quad { \int}_{\mathbb{T}^2}  v_{i, \eta}  =  M_i, \quad i=1,2,
\end{eqnarray}
and thus the appropriate space for droplet configurations is:
\begin{equation}\label{Xspace}
  X_\eta:=\left\{ v_{\eta}= (v_{1,\eta}, v_{2, \eta}) : \  \eta^2 v_{i,\eta}\in BV(\TT; \{0,1\}), i=1,2;  \ v_{1,\eta}v_{2, \eta}=0 \ a.e., \
\int _{\mathbb{T}^2}  v_{i, \eta}  =  M_i, \; i=1,2
\right\}, 
\end{equation}
We recall that we represent the exterior domain equivalently as $v_{0,\eta}=v_{1,\eta}+v_{2,\eta}$.

In terms of $v_\eta\in X_\eta$, the energy takes the form,
\begin{eqnarray} 
\mathcal{E} (u) \label{scaledEu}
&=& \eta \left (   \frac{\eta}{2} \sum_{i=0}^2 \beta_i \int_{\mathbb{T}^2} |\nabla v_{i,\eta} |  +   
\sum_{i,j = 1}^2  \frac{\gamma_{ij} \eta^3}{2} \int_{ \mathbb{T}^2} \int_{   \mathbb{T}^2 } v_{i, \eta}(x) v_{j, \eta}(y) dx dy \right )
\end{eqnarray}
Finally, we scale the interaction matrix $\gamma=[\gamma_{ij}]$  in such a way that both terms in \eqref{scaledEu} contribute at the same order in $\eta$.  This is accomplished by choosing
\begin{eqnarray}
\gamma_{ij} = \frac{1}{|\log \eta| \eta^3} \Gamma_{ij},   \nonumber
\end{eqnarray}
with fixed constants $\Gamma_{ij}\ge 0$.
Dividing by $\eta$, we thus obtain an $O(1)$ valued energy for  $v_\eta\in X_\eta$, defined by:
\begin{eqnarray} \label{Eeta}
  E_{\eta}^{} (v_{\eta}) := \frac{1}{\eta} \mathcal{E} ( u ) 
  =
 \frac{\eta}{2} \sum_{i=0}^2  \beta_i \int_{\mathbb{T}^2} |\nabla v_{i,\eta} | +  \sum_{i,j = 1}^2  \  \frac{  \Gamma_{ij} }{2 |\log \eta| } \int_{\mathbb{T}^2} \int_{\mathbb{T}^2} G_{\mathbb{T}^2}(x - y)  v_{i, \eta}(x) v_{j, \eta}(y) dx dy, \end{eqnarray}
and $E_\eta(v_\eta)=+\infty$ otherwise.  This is the droplet scaling of \cite{bi1},  which we adopt in the remainder of the paper.

\subsection{Expanding the energy in droplets}

The advantage of the droplet regime for nonlocal isoperimetric problems is that it separates length scales in the energy, so that both the local isoperimetric effects and the nonlocal repulsive interaction are both observed but at different scales in $\eta$. As a result of this balancing of the strengths of the competing terms, any finte energy configuration $v_\eta$ decomposes into an at most countable number of well-separated droplets, each of diameter $O(\eta)$.  

In the following we assume that $(v_\eta)_{\eta>0}$ are a family in $X_\eta$ with bounded energy, $\exists C>0$ with $E_\eta(v_\eta)\le C$, for all $\eta>0$.  Such configurations may be decomposed into indecomposable clusters (that is, connected in a measure-theoretic sense) of diameter of $O(\eta)$, for which the BV norm separates exactly.   
As in \cite{ablw} we may prove:
\begin{lemma}\label{components}
Assume \eqref{triangle} holds, and
let $v_\eta=\eta^{-2}\chi_{\Om_\eta}\in X_{\eta}$ with $\eta\int_{\TT} |\nabla v_{\eta} | \le C$.  Then, there exists an at most countable collection $v_{\eta}^k=\eta^{-2}\chi_{\Om_{\eta}^k}\in X_{\eta}$, with clusters $\Om_{\eta}^k=(\Om_{1,\eta}^k, \Om_{2,\eta}^k)$, such that 
\begin{enumerate}
\item[(a)]  $\Om_{i, \eta}^k\cap \Om_{j, \eta}^\ell=\emptyset$, for $k\neq\ell$ and $i,j=1,2$.
\item[(b)]  $v_\eta = \sum_{k=1}^\infty v_\eta^k$ in $X_{\eta}$; in particular,
$$  \int_{\TT} |\nabla v_{i, \eta}| = \sum_{k=1}^\infty \int_{\TT} |\nabla v_{i, \eta}^k|,
\quad i=0,1,2. $$
\item[(c)] There exists $C>0$ with $\diam(\Om_{\eta}^k) \le C\eta$ for all $k\in\NN$.
\end{enumerate}
\end{lemma}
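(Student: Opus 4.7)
The plan is to apply the theory of indecomposable components of finite perimeter sets from \cite{ACMM01} to the combined phase domain $U_\eta := \Omega_{1,\eta}\cup \Omega_{2,\eta}$. The assumption $\eta\int_{\TT}|\nabla v_\eta|\le C$, together with the rescaling $v_{i,\eta}=\eta^{-2}\chi_{\Om_{i,\eta}}$ and Remark~\ref{equivrem} (using \eqref{triangle} to compare the weighted and standard BV norms), yields the uniform perimeter bounds $P_{\TT}(\Om_{i,\eta})\le C\eta$ for $i=0,1,2$, and in particular $P_{\TT}(U_\eta)\le C\eta$.

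I would then invoke \cite{ACMM01} to write $U_\eta = \bigsqcup_{k\in\NN} U_\eta^k$ as an at most countable disjoint union of indecomposable components, with $P_{\TT}(U_\eta)=\sum_k P_{\TT}(U_\eta^k)$ and reduced boundaries $\partial^* U_\eta^k$ that are pairwise $\mathcal{H}^1$-disjoint. Setting $\Om_{i,\eta}^k := \Om_{i,\eta}\cap U_\eta^k$ and $v_{i,\eta}^k := \eta^{-2}\chi_{\Om_{i,\eta}^k}$ for $i=1,2$, the disjointness statement (a) is immediate, and the pointwise identity $v_\eta = \sum_k v_\eta^k$ is clear.

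For the BV-additivity asserted in (b), the key point is that, up to $\mathcal{H}^1$-null sets,
\[ \partial^* \Om_{i,\eta} \;=\; \bigsqcup_k \partial^*\Om_{i,\eta}^k, \]
because any piece of $\partial^*\Om_{i,\eta}$ lies either on some $\partial^* U_\eta^k$ or on the internal interface $\partial^*\Om_{1,\eta}\cap \partial^*\Om_{2,\eta}$, and the latter is contained in the measure-theoretic interior of exactly one component $U_\eta^k$; combining with the ACMM additivity on $U_\eta$ itself yields the desired splitting of $|D\chi_{\Om_{i,\eta}}|$. For assertion (c), I would use the classical fact that any indecomposable set $E\subset\R^2$ of finite perimeter satisfies $\diam(E)\le \tfrac12 P_{\R^2}(E)$, proved by projecting onto the line through two points realizing the diameter and noting that the projection of an indecomposable set is an interval of measure at most $P(E)/2$. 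Since $P_{\TT}(U_\eta^k)\le C\eta$ is well below the injectivity radius of $\TT$ for small $\eta$, each $U_\eta^k$ lifts isometrically to an indecomposable subset of $\R^2$ with the same perimeter, giving $\diam(U_\eta^k)\le C\eta$ uniformly in $k$.

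The main obstacle I expect is making the splitting in (b) fully rigorous for the individual phases $v_{1,\eta}$ and $v_{2,\eta}$, rather than just for the union $v_{0,\eta}$: the ACMM additivity is formulated for the decomposition of $U_\eta$, and one must carefully track how the internal interface $\partial^*\Om_{1,\eta}\cap\partial^*\Om_{2,\eta}$ distributes among the components. Once this bookkeeping is done, together with the mild check that small-perimeter components cannot wrap around $\TT$, the remainder is a packaging of classical finite-perimeter results, mirroring the construction used in the equal-weights case treated in \cite{ablw}.
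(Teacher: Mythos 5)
Your proposal is correct and follows essentially the same route as the paper: both apply the Decomposition Theorem of \cite{ACMM01} to the union $\Om_{1,\eta}\cup\Om_{2,\eta}$, define the phase components by intersecting with the indecomposable pieces, and obtain (c) from the fact that in two dimensions the diameter of an indecomposable set is controlled by its perimeter. Your added bookkeeping for the splitting of the reduced boundaries in (b) and the lifting from $\TT$ to $\R^2$ merely fills in details the paper leaves implicit.
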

\begin{proof}
Lemma~\ref{components} was proven in the case of equal weights in \cite{ablw}, and the same proof may be employed here.  However, it is also a consequence of the more general Decomposition Theorem \cite[Theorem 1]{ACMM01}, for finite perimeter sets in $\R^n$; we sketch the argument here for completeness.  Indeed, applying the Decomposition Theorem in \cite{ACMM01} to
$\Om_{0,\eta}=\bar \Om_{1,\eta}\cup \bar \Om_{2,\eta} $ 
for each $\eta>0$ we obtain an at most countable disjoint collection of finite perimeter sets,
$$  \Om_{0,\eta}=\bigcup_{k} \Om_{0,\eta}^k,
$$
for which each $\Om_{0,\eta}^k$ is indecomposable.  In two dimensions, the diameter of each indecomposable component is controlled by its perimeter, and so (c) holds for each $\Om^k_{0,\eta}$.  Using the $\{\Om^k_{0,\eta}\}$ to separate $\Om_{i,\eta}$ into essentially disjoint components $\Om_{i,\eta}^k=\Om_{i,\eta}\cap\Om^k_{0,\eta}\,$, $i=1,2$, the decomposition of the perimeters (b) then follows.  \end{proof}

\medskip

 In order to prove Gamma-convergence results and convergence of minimizers we need to expand the energy in terms of the indecomposable clusters $\Om_\eta^k=(\Om_{1,\eta}^k, \Om_{2,\eta}^k)$.  Since the diameter of the support $\Om_{\eta}^k$ is small compared to $\TT$, we may think of each $v_{i,\eta}^k$ as a function on $\R^2$.  This enables us to blow up each component at scale $\eta$ to determine the fine structure of the configuration $v_\eta$.  For each fixed $k\in\NN$ choose any point $\xi_\eta^k\in\Om_{0,\eta}^k\subset\TT$.  Then for  $i=1,2,$ we define a pair
$ z_{i}^k (x) : \mathbb{R}^2 \rightarrow \{0, 1\}${\,}, $i=1,2$, by
 \begin{eqnarray}
  z_{i}^k (x):=  \eta^2  v_{i, \eta}^k (\xi_\eta^k+\eta x)=\chi_{A_{i,\eta}^k}(x) , \quad i=1,2,
  \end{eqnarray}
 where $A_{i,\eta}^k = \eta^{-1}\left(\Om_{i,\eta}^k-\xi_\eta^k\right)\subset\R^2$, $i=1,2$, with $A^k_\eta=(A^k_{1,\eta}, A^k_{2, \eta})$ a finite perimeter 2-cluster in $\R^2$.
Then,
\begin{eqnarray}
 \int_{\mathbb{R}^2}  z_{i}^k = \int _{\TT}  v_{i,\eta}^k, \text{ and } \int_{\mathbb{R}^2} | \nabla z_{i}^k | =\eta \int _{\TT} | \nabla  v_{i,\eta}^k |.
\end{eqnarray}
 As above, we set $A^k_{0,\eta}=\bar A^k_{1,\eta}\cup \bar A^k_{2, \eta}\,$, which has the same perimeter as the exterior domain.
 
For a 2-cluster $A=(A_1,A_2)$, $A_i\subset\R^2$, $i=1,2$ of  finite perimeter, and $A_0=A_1\cup A_2$,  we define
$$  \GG (A) : = P_\sigma (A) +  \sum_{i,j=1}^2 \frac{\Gamma_{ij}}{4\pi} |A_i|\, |A_j| ,  $$
and for $m=(m_1,m_2)$,
\begin{align*}
  e_0( m )&:=\min   \left\{   \GG( A ) \ | \  A =(A_1,A_2) \text{ 2-cluster, with $| A_i  |= m_i $, $i=1,2$}\right \}
  \end{align*}
  By the results of Section 2, the minimum in $e_0(m)$ is attained for all $m=(m_1,m_2)$ nonzero, with the geometry of minimizers determined by the choice of weights $\sigma_{ij}$, a double bubble, core shell, or single bubble (which in particular occurs if either $m_i=0$, $i=1$ or $2$.)
We expect that the effect of the nonlocal interaction will be to enforce splitting of the mass into several droplets, but that energy minimization will determine the distribution of the droplet masses.  To this end, we define
  \begin{eqnarray}  \label{mine0}
 \overline{e_0 }(M ) := \inf \left\{ \sum_{k=1}^{\infty} e_0 (m^k ) :  m^k = (m_1^k, m_2^k ),  \ m_i^k \geq 0,\ \sum_{k=1}^{\infty}  m_i^k = M_i, i = 1, 2 \right\},
 \end{eqnarray}
where $M = (M_1, M_2)$.

Expanding $v_\eta$ into component clusters, and separating the on-diagonal terms in the double sum,
\begin{align} \nonumber
E_\eta(v_\eta)
&=\sum_{k=1}^{\infty} \sum_{i=0}^2 \frac{\eta}{2} \beta_i \int_{\TT}|\nabla v_{i, \eta}^k |
  +{\Gamma_{ij} \over 2 |\log \eta|} \sum_{k,\ell=1}^{\infty} \sum_{i,j=1}^2
     \int_{\TT}\int_{\TT} v_{i, \eta}^k (x)\, G_{\mathbb{T}^2} (x-y)\, v_{j, \eta}^\ell(y)\, dx\, dy \\   \nonumber
     & =  \sum_{k=1}^{\infty} \sum_{i=0}^2 \frac{\beta_i }{2}  \int_{\mathbb{R}^2}  |\nabla z_{i}^k| 
  +{\Gamma_{ij} \over 2|\log \eta|} \sum_{k,\ell=1}^{\infty} \sum_{i,j=1}^2
     \int_{ A_i^k}\int_{ A_j^\ell} G_{\mathbb{T}^2} (\xi^k_\eta+ \eta \tilde x - \xi^\ell_\eta - \eta \tilde y)\, d \tilde x\, d \tilde y \\    \nonumber
&= \sum_{k=1}^{\infty} \left[ P_\sigma(A^k_{\eta})  + \sum_{i,j=1}^2 {\Gamma_{ij}\over 4\pi} |A^k_{i,\eta}|\, |A^k_{j,\eta}| \right]+|\log\eta|^{-1} \Phi_\eta  \\
\label{firstlimit}
&= \sum_{k=1}^{\infty} \GG(A^k_\eta) + |\log\eta|^{-1} \Phi_\eta ,
\end{align}
where the remainder terms include a first piece depending on only the geometry of the indecomposable clusters, and a second piece containing off-diagonal terms which encodes the interactions between them,
\begin{align}\nonumber
   \Phi_\eta
 &= \sum_{k=1}^\infty  \sum_{i,j=1,2} {\Gamma_{ij}\over 2}
  \int_{A^k_{i,\eta}}\int_{A^k_{j,\eta}} \left( {1\over 2\pi}\log{1\over |x-y|} + R_{\TT} (\eta(x-y))\right) dx\, dy  \\
   \nonumber
     &\qquad
   + \sum_{k\neq\ell=1}^{\infty}  \sum_{i,j=1,2}  {\Gamma_{ij} \over 2}
     \int_{ A_{i,\eta}^k}\int_{ A_{j,\eta}^\ell} G_{\mathbb{T}^2} (\xi_\eta^k+ \eta \tilde x - \xi_\eta^\ell - \eta \tilde y)\, d \tilde x\, d \tilde y
     \\
      \label{interaction}
     &= : \Phi^1(\{A^k_\eta\}_{k\in\NN}) + \Phi^2_\eta(\{A^k_\eta\}_{k\in\NN},\{\xi^k_\eta\}_{k\in\NN})
\end{align}

\subsection{The first order limit}

The decomposition \eqref{firstlimit} induced by Lemma~\ref{components} suggests a first order Gamma limit for the functional $E_\eta$, in which the mass splits via $\overline e_0(M)$, encoding the geometric information from the weighted isoperimetric energy $\GG$.

We define a class of measures with countable support on $\TT$,
$$  Y:=\left\{ v_0=\sum_{k=1}^\infty (m_1^k,m_2^k)\, \delta_{x^k} \ | \  m_i^k\ge 0, \ x^k\in\TT \ \text{distinct points}\right\},
$$
and a functional on $Y$,
\begin{eqnarray}  \label{E0first}
E_0^{} (v_0 ) : = 
\left\{  
\begin{array}{rcl}
\sum_{k=1}^{\infty} \overline{e_0} (m^k ),  & &  \text{ if }  v_0\in Y, \\  
 \infty, & & \text{ otherwise. }   
 \end{array}  
\right.   
\end{eqnarray}  

As in \cite{ablw}, we have a first Gamma-convergence result:
 \begin{theorem}[First $\Gamma$-Limit] \label{firstGammalimit}
We have 
\begin{eqnarray}
E_{\eta}^{}  \overset{\Gamma}{\longrightarrow} E_0^{}  \  \text{ as } \ \eta \rightarrow 0. \nonumber
\end{eqnarray}
That is,
\begin{enumerate}
\item[(a)]  Let $ v_{\eta}\in X_\eta $ be a sequence with $\sup_{\eta>0} E_{\eta}^{}  (v_{\eta} )<\infty$.  Then there exists a subsequence $\eta\to 0$ and $v_0\in Y$ such that 
$ v_{\eta} \rightharpoonup v_{0} $ (in the weak topology of the space of measures),  and 
\begin{eqnarray}
\underset{\eta\rightarrow0}{\lim\inf} E_{\eta}^{}  (v_{\eta} ) \geq E_0^{} (v_{0}). \nonumber 
\end{eqnarray}
\item[(b)]  Let $v_0\in Y$ with $E_0^{} (v_{0}) < \infty$. Then there exists a sequence $ v_{\eta} \rightharpoonup v_{0} $ weakly as measures, such that 
\begin{eqnarray}
\underset{\eta\rightarrow0}{\lim\sup} E_{\eta}^{}  (v_{\eta} ) \leq E_0^{} (v_{0}). \nonumber 
\end{eqnarray}
\end{enumerate}
\end{theorem}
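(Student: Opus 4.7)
The plan is to exploit the energy expansion \eqref{firstlimit} and the cluster decomposition from Lemma~\ref{components}, closely following the strategy in \cite{ablw} for the unweighted case. Remark~\ref{equivrem} ensures that the weighted perimeter inherits the BV compactness and lower semicontinuity needed throughout.

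For (a), I start from the decomposition $v_\eta = \sum_k v_\eta^k$ with indecomposable clusters of diameter $O(\eta)$. The bound $E_\eta(v_\eta) \leq C$ controls $\sum_k P_\sigma(A_\eta^k)$, so the planar isoperimetric inequality forces $\sum_k \sqrt{|A_\eta^k|}$ to be bounded and the nontrivial components to form an at most countable collection. Picking centers $\xi_\eta^k \in \Om_{0,\eta}^k$ and extracting a subsequence, $\xi_\eta^k \to x^k \in \TT$ and the blow-up $A^k_\eta$ converges in $L^1_{\mathrm{loc}}$ to a $2$-cluster $A^k$ with $|A^k_i| = \tilde m^k_i$. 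Grouping by limit point, $m(x) := \sum_{k:\, x^k = x} \tilde m^k$, produces $v_0 = \sum_x m(x)\delta_x \in Y$ with $v_\eta \wto v_0$. For the liminf, split $G_\TT = \frac{1}{2\pi}\log(1/|\cdot|) + R_\TT$: the bounded $R_\TT$-piece divided by $|\log\eta|$ is $o(1)$, while the $\log$-piece is nonnegative since the intrinsic diameter of $\TT$ is below one. Hence $|\log\eta|^{-1}\Phi_\eta \geq o(1)$, and the lower semicontinuity of $\GG$ on each $A^k_\eta$ combined with Fatou across $k$ gives
\[ \liminf_\eta E_\eta(v_\eta) \;\geq\; \sum_k \GG(A^k) \;\geq\; \sum_k e_0(\tilde m^k) \;\geq\; \sum_x \overline{e_0}(m(x)) \;=\; E_0(v_0), \]
with the last inequality immediate from the definition of $\overline{e_0}$.

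For (b), given $v_0 = \sum_k m^k \delta_{x^k}$ with $E_0(v_0) < \infty$, I fix $\delta>0$ and for each $k$ select a finite splitting $m^k = \sum_{j=1}^{N_k} m^{k,j}$ with $\sum_j e_0(m^{k,j}) \leq \overline{e_0}(m^k) + \delta\, 2^{-k}$, together with a $\GG$-minimizing cluster $A^{k,j}$ of mass $m^{k,j}$ whose existence is provided by Section~2. Setting $\omega_\eta := |\log\eta|^{-1/2}$, I pick centers $\xi_\eta^{k,j}\in\TT$ with $\xi_\eta^{k,j} \to x^k$, mutual separation at least $\omega_\eta$ inside each cluster, and bounded below across distinct clusters; then define $v_\eta = \eta^{-2}\sum_{k,j}\chi_{\eta A^{k,j}+\xi_\eta^{k,j}}$. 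Substituting into \eqref{firstlimit}, the on-diagonal part yields $\sum_{k,j}\GG(A^{k,j})$ exactly, the off-diagonal $|\log\eta|^{-1}\Phi_\eta^2$ contributes $O\!\left(\log(1/\omega_\eta)/|\log\eta|\right) = O(\log|\log\eta|/|\log\eta|) = o(1)$, and a diagonal argument in $\delta\to 0$ delivers the recovery sequence with $\limsup_\eta E_\eta(v_\eta) \leq E_0(v_0)$.

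The delicate point in both halves is the interplay between cluster separations and the $|\log\eta|^{-1}$-weighted Green's function. Several droplets may concentrate at the same limit point $x^*$, and the off-diagonal nonlocal energy is only under control provided the mutual distance $\omega_\eta$ of such droplets satisfies $\log(1/\omega_\eta) = o(|\log\eta|)$ while $\omega_\eta \gg \eta$ (keeping the rescaled copies disjoint); the scale $|\log\eta|^{-1/2}$ meets both conditions. For the liminf this positivity simply survives in the inequality, but for the limsup it must be engineered explicitly by the placement of the $\xi_\eta^{k,j}$, which is the main technical obstacle in transferring the scheme of \cite{ablw} to the weighted setting.
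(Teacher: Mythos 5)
Your proposal is correct and follows essentially the same route as the paper, which itself only sketches the argument by invoking the cluster decomposition (Lemma~\ref{components}), the expansion \eqref{firstlimit}, and the weighted analogue of Lemma~3.5 and Theorem~3.2 of \cite{ablw} (justified by the norm equivalence in Remark~\ref{equivrem}). Your explicit handling of the sign of the logarithmic kernel, the grouping of blow-up limits by concentration point, and the $|\log\eta|^{-1/2}$ separation scale in the recovery sequence are exactly the ingredients the paper relies on implicitly.
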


As the proof is very similar to that of \cite[Theorem 3.2]{ablw}, we provide only a sketch here.  
For the lower semicontinuity result (a), we take any sequence $(v_\eta)_{\eta>0}$ with uniformly bounded energy $E_\eta(v_\eta)\le C$, and 
 apply   \cite[Lemma 3.5]{ablw}.  Note that  the proof of Lemma~3.5 of \cite{ablw}, and of all the convergence results in that paper, were given for the case of equal weights $\beta_i=1$, $i=0,1,2$, but the proofs are identical for the weighted case, as long as the triangle inequalities \eqref{triangle} hold.  It states that, up to a subsequence $\eta\to 0$ (not relabelled,) there exist at most countably many 2-clusters $\{A^k\}$ in $\RR$ and corresponding points $\{x_{\eta}^k \}$ in $\TT$, such that the  domain 
 $\Om_\eta=\bigcup_{k\in\NN}\Om_\eta^k$ (with indecomposable clusters $\Om_\eta^k$,) satisfies:
\begin{equation}\label{CL12}  \left|  A^k \ \triangle  \,\left( \eta^{-1}\left[ \Om_{\eta}^k -x_{\eta}^k \right]\right) \right| = \left| A^k \ \triangle \, A^k_\eta\right|
   \xrightarrow{\eta_{} \rightarrow 0} 0, \qquad\forall k.
   \end{equation}
Moreover, the masses of the clusters are preserved in the limit,
\begin{equation}
   \label{CL13}
   M_i=\lim_{\eta_{} \to 0} \sum_{k=1}^\infty  \eta^{-2} |\Om_{i, \eta}^k| = \sum_{k=1}^\infty |A_i^k |, \quad i=1,2, 
   \end{equation}
and the total energy is bounded below by $\overline{e_0}(M)$. 
\begin{equation}
   \label{CL14}  \liminf_{\eta_{} \to 0} E_{\eta_{} }( v_{\eta_{}  }) \ge
       \sum_{k=1}^\infty \EE(A^k) \ge \overline{e_0}(M).
\end{equation}

At this point, note that \eqref{CL14} is the lower bound of the first Gamma convergence result, Theorem~\ref{firstGammalimit}.  The remainder of the Gamma convergence then  follows the proof of Theorem~3.2 in \cite{ablw}.  

\medskip

The geometrical structure can be made more precise for minimizers:  the limiting clusters after blow-up must minimize the weighted perimeter functional $\GG$ with the given mass distribution imposed by $\overline{e_0}(M)$.  Indeed, it is enough to assume that $E_\eta(v_\eta)\to \overline{e_0}(M)$ to obtain this conclusion:
\begin{proposition}\label{MinThm}
Let  $v_\eta^{\ast}=\eta^{-2}\chi_{\Om_\eta}\in X_\eta$ such that $\displaystyle\lim_{\eta\to 0}E_\eta=\overline{e_0}(M)$.  Then, there exists a subsequence $\eta \to 0$ (still denoted by $\eta$)  such that:
\begin{enumerate}
\item[(i)]  there exist connected clusters $(A^k)_{k\in\NN}$ in $\RR$ and points $x_{\eta}^k \in \TT$, $k\in\NN$, for which
\begin{equation} \label{MT1}
          \eta^{-2}\left| \Om_\eta \ \triangle \ \bigcup_{k=1}^\infty \left(\eta A^k + x_{\eta}^k \right)
                          \right| \xrightarrow{\eta \rightarrow 0} 0,
\end{equation}  
and
$$  M_i= \sum_{k=1}^\infty |A_i^k|;  $$
\item[(ii)]  each $A^k$ is a minimizer of $\GG$:  
\begin{equation}\label{MT2}
   \GG(A^k)= e_0(m^k), \qquad  m^k=(m_1^k, m_2^k)=|A^k|;
\end{equation}
Moreover,
\begin{equation}\label{MT3} 
   \overline{e_0}(M) = 
   \lim_{\eta\to 0} E_\eta(v_\eta)=\sum_{k=1}^\infty \GG(A^k)  =     
         \sum_{k=1}^\infty e_0(m^k).
   \end{equation}   
\item[(iii)]  If $k\neq\ell$, $d(x^k_\eta, x^\ell_\eta)\gg\eta$; more precisely, 
$$  \lim_{\eta\to 0} {|\log d(x^k_\eta, x^\ell_\eta)|\over |\log\eta|} =0, \quad \forall k\neq \ell.
$$
\end{enumerate}
\end{proposition}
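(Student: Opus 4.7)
The plan is to leverage the compactness and lower-bound machinery underlying Theorem~\ref{firstGammalimit} (in particular the blow-up decomposition of Lemma~\ref{components}) together with the exact energy identity \eqref{firstlimit}. For (i), extract a subsequence along which Lemma~\ref{components} applies and, after choosing a reference point $x_\eta^k \in \Om_{0,\eta}^k$ in each indecomposable component, invoke \cite[Lemma~3.5]{ablw} (which carries over to the weighted case under \eqref{triangle} by Remark~\ref{equivrem}) to obtain clusters $A^k \subset \RR$ satisfying the blow-up convergence \eqref{MT1} and the mass identity $M_i = \sum_k |A_i^k|$.

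For (ii), combine the hypothesis $\lim E_\eta = \overline{e_0}(M)$ with the chain
\[
\overline{e_0}(M) = \lim_{\eta\to 0} E_\eta(v_\eta^{\ast}) \ge \liminf_{\eta\to 0} \sum_k \GG(A_\eta^k) \ge \sum_k \GG(A^k) \ge \sum_k e_0(m^k) \ge \overline{e_0}(M),
\]
where the second inequality uses $L^1$-lower semicontinuity of $\GG$ under \eqref{CL12}, and the third uses the definition of $e_0(m^k)$. Equality throughout forces $\GG(A^k) = e_0(m^k)$ for every $k$, giving \eqref{MT2}, and simultaneously yields \eqref{MT3}. In particular $\sum_k \GG(A_\eta^k) \to \sum_k e_0(m^k)$.

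The new work is (iii). From the exact expansion $E_\eta = \sum_k \GG(A_\eta^k) + |\log\eta|^{-1}(\Phi^1 + \Phi^2_\eta)$ of \eqref{firstlimit}--\eqref{interaction}, the limits from (ii) yield $|\log\eta|^{-1}(\Phi^1 + \Phi^2_\eta) \to 0$. The self-interaction remainder $\Phi^1$ is uniformly bounded in $\eta$ (each $A_\eta^k$ has diameter $O(1)$, $R_\TT$ is smooth, and $\log(1/|x-y|)$ is locally integrable in $\RR$), hence $|\log\eta|^{-1}\Phi^1 \to 0$ and therefore $|\log\eta|^{-1}\Phi^2_\eta \to 0$. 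Using $G_\TT(z) = -\frac{1}{2\pi}\log|z| + R_\TT(z)$ and the bounded diameter of each $A^k_\eta$, the off-diagonal contribution between clusters $k \neq \ell$ expands as
\[
\frac{\Gamma_{ij}}{2|\log\eta|} \int\!\!\int G_\TT(\xi_\eta^k - \xi_\eta^\ell + \eta(\tilde x - \tilde y)) z_i^k z_j^\ell \, d\tilde x\, d\tilde y = \frac{\Gamma_{ij}|A_i^k||A_j^\ell|}{4\pi} \cdot \frac{|\log d(x^k_\eta, x^\ell_\eta)|}{|\log\eta|} + o(1).
\]
Pass to a further subsequence so that $c_{k\ell} := \lim |\log d(x^k_\eta,x^\ell_\eta)|/|\log\eta| \in [0,1]$ exists for every pair. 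Non-negativity of every summand forces $c_{k\ell} \Gamma_{ij} |A_i^k||A_j^\ell| = 0$ for all $i,j$ and $k \neq \ell$; since each blow-up cluster is non-trivial and the diagonal coefficients $\Gamma_{ii}$ are positive (for the types actually present, which are shared across clusters in the double-bubble and core-shell regimes of interest), this forces $c_{k\ell} = 0$, which is exactly (iii).

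The main obstacle is justifying the cross-term expansion above in a borderline regime where $d(x^k_\eta, x^\ell_\eta) \lesssim \eta$, since then the shift $\eta(\tilde x - \tilde y)$ is comparable to the separation and cannot be absorbed into $o(1)$. The cleanest route is to rule this regime out first: if $\eta^{-1} d(x^k_\eta, x^\ell_\eta)$ stayed bounded along a subsequence, a further blow-up would merge $A^k$ and $A^\ell$ into a single limiting cluster of mass $m^k + m^\ell$ at finite mutual distance, and subadditivity of $e_0$ (via concatenation of admissible configurations) together with indecomposability would contradict the optimality $\GG(A^k) = e_0(m^k)$ obtained in (ii). Once $\eta^{-1} d(x^k_\eta, x^\ell_\eta) \to \infty$ is known, the expansion above is uniform and the argument of the previous paragraph concludes the proof.
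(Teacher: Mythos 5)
Your outline for (i) and (ii) follows the same route the paper takes: the paper's entire proof of this proposition is a one-line deferral to Lemmas~3.5 and 3.6 of \cite{ablw} (with Remark~\ref{equivrem} supplying the equivalence of $P_\sigma$ with the BV norm), and your concentration--compactness decomposition plus the squeeze $\overline{e_0}(M)\ge\liminf_\eta\sum_k\GG(A^k_\eta)\ge\sum_k\GG(A^k)\ge\sum_k e_0(m^k)\ge\overline{e_0}(M)$ is exactly the content of those lemmas. One step you should make explicit in the first inequality: discarding the remainder $|\log\eta|^{-1}\Phi_\eta$ from \eqref{firstlimit} is legitimate only because $\Phi_\eta\ge -C$ uniformly ($G_{\TT}$ is bounded below on $\TT$ and $\Phi^1$ is uniformly bounded), so that $|\log\eta|^{-1}\Phi_\eta$ has nonnegative liminf; $\Phi_\eta$ itself is not sign-definite.

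The gap is in your treatment of the borderline regime $d(x^k_\eta,x^\ell_\eta)\lesssim\eta$ in (iii). The fix you propose --- blow up, merge $A^k$ and $A^\ell$ into one limiting configuration, and contradict optimality via ``subadditivity of $e_0$ together with indecomposability'' --- does not produce a contradiction: two indecomposable components sitting at finite mutual distance after rescaling are perfectly compatible with each one separately satisfying $\GG(A^k)=e_0(m^k)$, and concatenation of admissible configurations only yields an inequality of the form $e_0(m^k+m^\ell)\le e_0(m^k)+e_0(m^\ell)+(\text{cross terms})$, which points the wrong way for your purposes; nothing about proximity of the two limit sets violates their individual minimality. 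The regime is instead excluded by the interaction term you already use: since $|\xi^k_\eta+\eta\tilde x-\xi^\ell_\eta-\eta\tilde y|\le d(x^k_\eta,x^\ell_\eta)+C\eta$ on the supports (rescaled diameters are $O(1)$), one has the pointwise bound $G_{\TT}\ge\frac{1}{2\pi}\log\frac{1}{d+C\eta}-C$, hence the $(k,\ell)$ contribution to $|\log\eta|^{-1}\Phi^2_\eta$ is at least $\frac{\Gamma_{ij}}{4\pi}|A^k_i||A^\ell_j|\,\frac{|\log(d+C\eta)|}{|\log\eta|}+o(1)$ with no asymptotic expansion required; if $d=O(\eta)$ this liminf is strictly positive, contradicting $|\log\eta|^{-1}\Phi^2_\eta\to0$, and once $d\gg\eta$ the same bound upgrades to $|\log d|/|\log\eta|\to0$. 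Separately, your conclusion $c_{k\ell}=0$ genuinely requires $\sum_{i,j}\Gamma_{ij}|A^k_i||A^\ell_j|>0$ for the pair in question; with $\Gamma_{12}=0$ (the very case used in the paper's simulations) and two clusters carrying disjoint species this coefficient vanishes, so (iii) in full generality needs either a nondegeneracy hypothesis on the $\Gamma_{ij}$ or an argument (e.g.\ strict concavity of $m\mapsto 2\sigma_{0i}\sqrt{\pi m}$ forcing a single droplet per non-interacting species in any minimizing splitting) excluding such pairs. You flag this only in a parenthesis; it needs to be addressed, not assumed.
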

\begin{remark}\label{finiteremark}\rm
In Corollary~\ref{finiteness} we prove that in the core shell case $\beta_1=0$, minimizers of $\overline{e_0}(M)$ may only have finitely many nontrivial clusters, $A^1,\dots, A^K$, for some $K\in\NN$.  This was also the case in the unweighted case,  and in the cases of single bubbles ($\beta_0=0$) as proven in \cite[Theorem 1.1]{ablw}.
\end{remark} 

The proof of Proposition~\ref{MinThm} follows exactly those of Lemmas~3.5 and 3.6 in \cite{ablw} (recalling from Remark~\ref{equivrem} that the weighted perimeter controls the relevant BV norms in the convergence steps.)

\subsection{Second order limit}

To obtain the Second Gamma Limit, Theorem~\ref{secondGammalimit}, and the more detailed description of limiting configurations, we must analyze the remainder term in \eqref{interaction}.  As the analysis in the case of strict triangle inequalities \eqref{triangle} (leading to minimizers with double bubbles) or in case $\beta_0=0$ (yielding only single bubbles) is essentially identical to the unweighted case of our previous paper \cite{ablw}, we will concentrate on the case $\beta_1=0$, in which core shells are favored.  

We subtract the first $\Gamma$-limit from $E_\eta$, which (from the decomposition \eqref{firstlimit}, \eqref{interaction}) we expect to have scale $|\log \eta|^{-1}$.   
For $v_\eta\in X_\eta$, let
\begin{eqnarray}\label{Feta}
F_{\eta}^{} (v_{\eta} )  : = |\log \eta | \left [ E_{\eta}^{} (v_{\eta} ) -   \overline{e_0 } ( M ) \right ].
\end{eqnarray}
For the second $\Gamma$-limit we consider $v_\eta\in X_\eta$ for which $F_{\eta}(v_\eta)$ is bounded.  For these $v_\eta$, Proposition~\ref{MinThm} applies, and we may thus assume that (along subsequences) the detailed concentration structure described by \eqref{CL12}, \eqref{CL13}, \eqref{CL14} is observed.  Given Lemma~\ref{lem:omega2}, each limiting cluster $A^k=(A^k_1,A^k_2)$ is either a core shell $\mathcal{C}^{m^k_1}_{m^k_2}$ (if $m_1^k,m_2^k>0$,) or a single bubble (in case one of $m_i^k=0$.)  We recall that minimization of $\GG$ does not determine the centers of core shells; this will be addressed below.

\medskip

Next we define the second order limit function, starting with its domain, which is suggested from the first order limit Theorem~\ref{firstGammalimit} and the structure described in Proposition~\ref{MinThm}.
For $K \in \mathbb{N}$, $m_1^k  \geq 0$, $m_2^k \geq 0$ and $m_1^k + m_2^k > 0$, the sequence $K \otimes  (m_1^k, m_2^k) $ is defined by
\begin{eqnarray*}
 ( K \otimes  (m_1^k, m_2^k) )^k : = \left \{
 \begin{aligned}
 &(m_1^k, m_2^k),&  1 \leq k \leq K, \\
& (0, 0),&  K+1 \leq k < \infty.
 \end{aligned}
 \right.
\end{eqnarray*}
Let ${\cal{M}}_{M}$ be the set of optimal sequences made of all clusters for the problem \eqref{mine0}:
\begin{eqnarray}
{\cal{M}}_{M} := \Bigg \{ K \otimes  (m_1^k, m_2^k) : K \otimes  (m_1^k, m_2^k)  \text{ minimizes  \eqref{mine0} for } M_i ,  i = 1, 2,  \nonumber \\
 \text{ and  } \  \overline{e_0} (m^k) = e_0^{} (m^k), \  m^k = (m_1^k, m_2^k )  \Bigg \}. \nonumber
\end{eqnarray}
Let $Y_M$ denote the space of all measures $v_0=\sum_{k=1}^K m^k \delta_{x^k}$ with $\{x^1,\dots,x^K\}$ distinct points in $\TT$ and $K\otimes m^k\in \mathcal{M}_M$.  That is, $v_0\in Y_M$ represents the limit (in the sense of distributions) of a sequence of energy minimizers $v_\eta$ of mass $M$, which obey the conclusions of Proposition~\ref{MinThm}.  We recall that the finiteness of these components is a consequence of Corollary~\ref{finiteness}.  

\medskip

We now describe the terms appearing in the second $\Gamma$-limit, which arise from passage to the limit in $\Phi_\eta$, defined in \eqref{interaction}.  First, the ``self-interaction'' terms making up $\Phi^1(\{A^k_\eta\}_{k\in\NN})$.   For $m=(m_1,m_2)$, $m_1,m_2\ge 0$ and not both zero, define
\begin{multline}\label{fzerodef}
f_0(m)= \inf\biggl\{\sum_{i,j=1,2} \frac{\Gamma_{ij} }{2}  \left [ 
\frac{1}{2\pi}  \int_{A_i}\int_{A_j}  \log { 1 \over |x-y|} dx\, dy
+ m_i\, m_j\, R_{\TT}(0)\right]  \ : \  \\
 \  \ A=(A_1,A_2) \ \text{minimizes $\GG$ with $|A_\ell|=m_\ell$, $\ell=1,2$}
\biggr\}
\end{multline}
The minimization here is only pertinent in case $m_1,m_2>0$, that is, where $A=\mathcal{C}^{m_1}_{m_2}$ is a core shell; for single bubbles the minimizers of $\GG$ with one of $m_i=0$ are unique up to rigid motion, and the set above is a singleton.  It is in this term that energy minimization of the nonlocal energy resolves the degeneracy in the local isoperimetric problem, in favor of {\it centered} core shells:
\begin{proposition}\label{concentric} Let $\beta_1=0$, and $m=(m_1,m_2)$ with $m_1,m_2>0$.
\begin{enumerate}
\item[(a)]
If $\Gamma_{11}>\Gamma_{12}$, then the minimum in $f_0(m)$ is attained by a concentric core shell $A=\mathcal{C}^{m_1}_{m_2}$.
\item[(b)]  If $\Gamma_{11}<\Gamma_{12}$, then the minimum in $f_0(m)$ is attained by a core shell $A=\mathcal{C}^{m_1}_{m_2}$ whose inner boundary circle is tangent to the outside circle.
 \end{enumerate}
\end{proposition}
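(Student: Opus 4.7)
The plan is to reduce the minimization defining $f_0(m)$ to a one-parameter problem over the displacement of the inner disk, and then evaluate the relevant logarithmic integral explicitly via Newton's theorem for the two-dimensional potential of a disk.

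First, since $\beta_1=0$, Theorem~\ref{thm:coreshellexist} identifies the admissible clusters in the definition of $f_0(m)$ as exactly the core shells $\mathcal{C}^{m_1}_{m_2}$. Among these the weighted perimeter is constant, and the terms $m_i m_j R_{\TT}(0)$ in $f_0(m)$ depend only on the masses. Hence minimizing $f_0(m)$ reduces to minimizing
\[ \mathcal{L}(A) := \sum_{i,j=1}^2 \frac{\Gamma_{ij}}{4\pi}\int_{A_i}\int_{A_j}\log\frac{1}{|x-y|}\,dx\,dy \]
over core shells. Parametrize: set $D:=B_{r_1}(0)$ and $B:=B_{r_2}(p)$ with $r_1=\sqrt{(m_1+m_2)/\pi}$, $r_2=\sqrt{m_2/\pi}$, and $|p|\le r_1-r_2$. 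Then $A_1=D\setminus B$, $A_2=B$, and by translation invariance of $\mathcal{L}$ the center of $D$ may be placed at the origin without loss of generality. The only free parameter is $p$.

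The second step is to expand. Writing $G(x,y):=\log(1/|x-y|)$,
\[ \int_{A_1}\int_{A_1}G = \int_D\int_D G - 2\int_D\int_B G + \int_B\int_B G, \quad \int_{A_1}\int_{A_2}G = \int_D\int_B G - \int_B\int_B G, \]
while $\int_{A_2}\int_{A_2}G=\int_B\int_B G$. The quantities $\int_D\int_D G$ and $\int_B\int_B G$ are translations of fixed domains and hence independent of $p$. Collecting terms,
\[ \mathcal{L} = \text{const} + \frac{\Gamma_{12}-\Gamma_{11}}{2\pi}\,g(p), \qquad g(p):=\int_D\int_B G(x,y)\,dx\,dy. \]

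The third step is to compute $g(p)$. By Newton's theorem for the logarithmic potential of a disk, the function $u_D(y):=\int_D G(x,y)\,dx$ is radial in $y$, solves $\Delta u_D=-2\pi$ on $D$, and matches the exterior formula $\pi r_1^2\log(1/|y|)$ on $\partial D$; one solves to obtain, for $|y|\le r_1$,
\[ u_D(y)=\pi r_1^2\log(1/r_1)+\tfrac{\pi}{2}(r_1^2-|y|^2). \]
Since $B=B_{r_2}(p)\subset D$, integrating $u_D$ over $B$ and using $\int_{B_{r_2}(0)} z\,dz=0$ to kill the linear term in $p$ yields $g(p)=C_0(r_1,r_2)-\frac{\pi^2 r_2^2}{2}|p|^2$ for an explicit constant $C_0$. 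Therefore, up to an additive constant independent of $p$,
\[ \mathcal{L}(p) = -\frac{\pi r_2^2}{4}\,(\Gamma_{12}-\Gamma_{11})\,|p|^2. \]

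The conclusions of the proposition then follow at once. If $\Gamma_{11}>\Gamma_{12}$, the coefficient of $|p|^2$ is strictly positive, so $\mathcal{L}$ is strictly increasing in $|p|$ and its unique minimizer is $p=0$, the concentric core shell. If $\Gamma_{11}<\Gamma_{12}$, the coefficient is strictly negative, $\mathcal{L}$ is strictly decreasing in $|p|$, and the minimum over the constraint $|p|\le r_1-r_2$ is attained at $|p|=r_1-r_2$, meaning the inner circle is internally tangent to the outer one. I do not anticipate a serious obstacle; the main point that must be verified with care is the Newton-potential formula for $u_D$ (checking that the proposed expression has Laplacian $-2\pi$ on $D$ and matches the exterior value on $\partial D$), together with the symmetry argument that kills the linear-in-$p$ term. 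The rest is bookkeeping and the observation that the $R_{\TT}(0)m_im_j$ pieces of $f_0(m)$ are position-independent.
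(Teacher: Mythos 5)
Your proof is correct, and its overall architecture matches the paper's: both reduce the problem to the single free parameter $p$, observe that $I_{22}$ and the $m_im_jR_{\TT}(0)$ terms are position-independent, and isolate the $p$-dependence in the cross term with net coefficient proportional to $\Gamma_{12}-\Gamma_{11}$ (the paper writes $\sum_{i,j}\tfrac{\Gamma_{ij}}{2}I_{ij}=-(\Gamma_{11}-\Gamma_{12})I_{12}(p)+\text{const}$, which is your identity in different bookkeeping). Where you genuinely diverge is in the key analytic step, namely showing that $g(p)=\int_D\int_B\log\frac{1}{|x-y|}\,dx\,dy$ is decreasing in $|p|$. The paper applies the mean value property to the \emph{inner} disk (for $x\notin B_2$, $\int_{B_2}\log\frac1{|x-y|}\,dy=|B_2|\log\frac1{|x-p|}$), reducing to the single integral $\int_{B_1}\log\frac1{|x-p|}\,dx$, and then proves monotonicity in $t=|p|$ by differentiating and using an odd-symmetry/reflection argument about the line $\{x_1=t\}$; this is a soft argument that yields strict monotonicity without an explicit formula. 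You instead compute the Newtonian potential of the \emph{outer} disk in closed form, $u_D(y)=\pi r_1^2\log(1/r_1)+\tfrac{\pi}{2}(r_1^2-|y|^2)$ for $|y|\le r_1$ (your verification — $\Delta u_D=-2\pi$ in $D$, matching value and normal derivative on $\partial D$ — is correct), integrate over $B=B_{r_2}(p)\subset D$, and obtain the exact quadratic $g(p)=C_0-\tfrac{\pi^2 r_2^2}{2}|p|^2$. Your route buys an explicit rate and makes the strict monotonicity and the two endpoint conclusions ($p=0$ versus $|p|=r_1-r_2$) immediate; the paper's route avoids the explicit potential computation. Both are complete; no gap.
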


\begin{proof}  Given $m_1,m_2>0$, from Lemma~\ref{lem:omega2} we know that all minimizers of $\GG$ with $m_1,m_2>0$ are core shells, that is they are composed of two (round) balls $B_1 = B_{r_1} (0)$, $B_2=B_{r_2}(p)$, with $r_1>r_2$, and the outer domain $A_1=B_1\setminus B_2$, with inner disk $A_2=B_2$. The distance between $p$ (the center of $B_2$) and $0$ must be small enough that $A_2\subset A_1$.  We must choose $p$ to minimize the quantity
$$   \sum_{i,j =1}^2   \frac{\Gamma_{ij} }{2}  \left[  I_{ij} + m_i m_j R_{\TT}(0)\right],    \quad\text{where}\quad  I_{ij} =\frac{1}{2\pi}  \int_{A_i} \int_{A_j}  \log { 1 \over |x-y|} dx\, dy .
$$
Notice that $I_{22}$ is independent of the location of $p$, as are of course the terms $m_im_jR_{\TT}(0)$.

For any $x \in \mathbb{R}^2 \setminus B_2$, by the Mean Value Theorem for harmonic functions,
\begin{eqnarray*}
 \int_{B_2}  \log { 1 \over |x-y|} dy = |B_2|  \log { 1 \over |x-p|}.
\end{eqnarray*}
Thus, 
\begin{align*} 
I_{12} &=\frac{m_2}{2\pi}  \int_{B_1 \setminus B_2}   \log { 1 \over |x-p|} dx = \frac{m_2}{2\pi}  \left [   \int_{B_1 }   \log { 1 \over |x-p|} dx - \int_{B_2 }   \log { 1 \over |x-p|} dx \right ]  \\
&=\frac{m_2}{2\pi}  \int_{B_1 }   \log { 1 \over |x-p|} dx - I_{22}.
 \end{align*}

We claim that $I_{12}=I_{12}(p)$ is maximized when $p=0$ and minimized when the circles are tangent.  By symmetry we may assume $p=(t,0)$, with $t\ge 0$ lying in an interval $t\in [0,\alpha]$ for which $A_2\subset A_1$.  Differentiating,
\begin{equation}\label{deriv}
   {d\over dt}I_{12}(t,0) = {1\over 2\pi}\int_{B_1} { x_1 - t\over |x-(t,0)|^2} dx.  
\end{equation}
Let $\omega_t=\{ x\in B_{{\color{red} 1} }\ | \ x_1\ge t\}$, and $\tilde\omega_t$ its reflection in the line $\{x_1=t\}$.  As the integrand in \eqref{deriv} is odd around the line $\{x_1=t\}$, and 
$B_1\setminus(\omega_t\cup\tilde\omega_t)\subset \{x_1<t\}$, we have $\forall t\in (0,\alpha]$,
$$  {d\over dt}I_{12}(t,0) = {1\over 2\pi}\int_{B_1\setminus(\omega_t\cup\tilde\omega_t)} { x_1 - t\over |x-(t,0)|^2} dx <0.
$$
Thus, $I_{12}(t,0)$ is even and strictly monotone decreasing in $t\in [0,\alpha]$, so it attains its maximum value at $t=0$ and minimum at the extreme value $t=\alpha$, where the circles are tangent.  This concludes the proof of the claim.
 

To complete the argument, we note that,
\begin{eqnarray} 
I_{11} &= & \frac{1}{2\pi}  \int_{A_1} \int_{A_1}  \log { 1 \over |x-y|} dx\, dy = \frac{1}{2\pi}  \int_{A_1} \int_{B_1}  \log { 1 \over |x-y|} dx\, dy - I_{12} \notag \\
&=&   - I_{12}  + \frac{1}{2\pi} \left [  \int_{B_1} \int_{B_1}  \log { 1 \over |x-y|} dx\, dy -  \int_{B_2} \int_{B_1}  \log { 1 \over |x-y|} dx\, dy \right ]   \notag \\
&=&   - 2 I_{12}  + \frac{1}{2\pi}  \int_{B_1} \int_{B_1}  \log { 1 \over |x-y|} dx\, dy  - I_{22}. \notag
 \end{eqnarray}
 As $I_{22}$ and $  \frac{1}{2\pi} \int_{B_1} \int_{B_1}  \log { 1 \over |x-y|} dx\, dy $ are independent of the location of $p$, we have
\begin{eqnarray}
 \sum_{i,j =1}^2    \frac{\Gamma_{ij} }{2}   I_{ij} &=& \frac{\Gamma_{11} }{2} (-2 I_{12}(p)) + \Gamma_{12} I_{12}(p) + \text{ terms independent of } p  \notag \\
&=& -  (\Gamma_{11} - \Gamma_{12}) I_{12}(p) + \text{ terms independent of } p.
\end{eqnarray}
The stated conclusions then follow, depending on the sign of the coefficient $(\Gamma_{11} - \Gamma_{12})$.
\end{proof}

We note that in the case $\Gamma_{11}<  \Gamma_{12} $ it may be difficult to observe tangential core shells, since that choice of strong repulsion between the two phases may result in core shells being split into single bubbles.


\medskip

With this satisfactory resolution of the question of the specific geometry of optimal core shells, we may now define the second $\Gamma$-limit functional, by including the formal limits of the two remainder terms from \eqref{interaction}.
For $v_0\in Y_M$, we define
\begin{equation}\label{secondlimit}
 F_0(v_0)= \sum_{k=1}^{K}   f_0(m^k)  
+   \sum_{k,\ell=1\atop k\neq\ell}^K \sum_{i,j =1}^2    \frac{\Gamma_{ij} }{2} m_i^k m_j^\ell G_{\mathbb{T}^2}(x_i^k - x_j^\ell ) ,
\end{equation}
and $F_0(v_0)=+\infty$ otherwise.  The first term in \eqref{secondlimit} is thus already completely determined by the first $\Gamma$-limit and the fine structure of core shells as determined by Proposition~\ref{concentric}, and only the spatial distribution of droplets is at play in $F_0$.

%

\begin{theorem}[Second $\Gamma$-limit]  \label{secondGammalimit}
We have 
\begin{eqnarray}
F_{\eta}^{}   \overset{\Gamma}{\longrightarrow} F_0^{}     \  \text{ as } \  \eta \rightarrow 0. \nonumber
\end{eqnarray}
That is,
\begin{enumerate}
\item[(a)]  Let $ v_{\eta}\in X_\eta $ be a sequence with $\sup_{\eta>0} F_{\eta}^{}  (v_{\eta} )<\infty$.  Then there exists a subsequence $\eta\to 0$ and $v_0\in Y_M$ such that 
$ v_{\eta} \rightharpoonup v_{0} $ (in the weak topology of the space of measures),  and 
\begin{eqnarray}
\underset{\eta\rightarrow0}{\lim\inf} F_{\eta}^{}  (v_{\eta} ) \geq F_0^{} (v_{0}). \nonumber 
\end{eqnarray}
\item[(b)]  Let $v_0\in Y_M$ with $F_0^{} (v_{0}) < \infty$. Then there exists a sequence $ v_{\eta} \rightharpoonup v_{0} $ weakly as measures, such that 
\begin{eqnarray}
\underset{\eta\rightarrow0}{\lim\sup} F_{\eta}^{}  (v_{\eta} ) \leq F_0^{} (v_{0}). \nonumber 
\end{eqnarray}
\end{enumerate}
\end{theorem}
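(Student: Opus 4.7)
The plan is to exploit the exact decomposition derived in \eqref{firstlimit}, namely
\begin{equation*}
F_\eta(v_\eta) = |\log\eta|\Bigl[\sum_{k}\GG(A^k_\eta) - \overline{e_0}(M)\Bigr] + \Phi^1(\{A^k_\eta\}) + \Phi^2_\eta(\{A^k_\eta\},\{\xi^k_\eta\}),
\end{equation*}
and to analyze the three pieces separately. Boundedness of $F_\eta$ forces $E_\eta(v_\eta)\to \overline{e_0}(M)$, so Proposition~\ref{MinThm} applies and yields, along a subsequence, finitely many rescaled clusters $A^k_\eta$ converging in $L^1(\R^2)$ to optimal $\GG$-clusters $A^k$ (core shells when both $m_i^k>0$ by Lemma~\ref{lem:omega2}, single bubbles when one mass vanishes) whose mass distribution $(m^k)$ lies in $\mathcal{M}_M$, together with blow-up centers $\xi^k_\eta\to x^k\in\TT$.

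For the $\liminf$ bound (a), the bracketed term is nonnegative for every $\eta$ (since $\overline{e_0}(M)$ is the infimum of $\sum_k\GG(A^k)$), so it contributes nonnegatively to $\liminf F_\eta$. For $\Phi^1$, the uniform bound on the diameters of $A^k_\eta$ (Lemma~\ref{components}(c)) lets me pass to the limit termwise by dominated convergence in both the logarithmic integrals and the regular-part integrals (using continuity of $R_\TT$ at $0$); the limit equals the self-interaction evaluated on the limit clusters $A^k$, which is bounded below by $\sum_k f_0(m^k)$ by the very definition of $f_0$, since each $A^k$ is in the feasible set. For $\Phi^2_\eta$, I must first argue that $F_\eta\le C$ precludes collisions $x^k=x^\ell$ in the limit: if $d(\xi^k_\eta,\xi^\ell_\eta)\to 0$, the logarithmic singularity of $G_\TT$ drives the corresponding off-diagonal term to $+\infty$, contradicting the uniform bound on the $|\log\eta|$-free remainder $\Phi^2_\eta$. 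Once distinctness of the $x^k$ is secured, continuity of $G_\TT$ away from $0$ together with the scaling $\eta\tilde x\to 0$ yield convergence of $\Phi^2_\eta$ to the interaction sum appearing in \eqref{secondlimit}, completing the lower bound.

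For the recovery sequence (b), given $v_0=\sum_{k=1}^K m^k\delta_{x^k}\in Y_M$ with $F_0(v_0)<\infty$, I would select for each $k$ an explicit minimizer $A^k$ of $f_0(m^k)$---a concentric core shell when $\Gamma_{11}>\Gamma_{12}$ and a tangential one when $\Gamma_{11}<\Gamma_{12}$ by Proposition~\ref{concentric}, or a single disk when one mass is zero---and set $v_\eta=\eta^{-2}\sum_{k=1}^K\chi_{\eta A^k+x^k}$. Because each $A^k$ is a $\GG$-optimizer at mass $m^k$ and $(m^k)\in\mathcal{M}_M$, the bracket vanishes identically once $\eta$ is small enough for the rescaled copies to be disjoint, while the same dominated/continuous passage to the limit shows $\Phi^1(\{A^k\})\to\sum_k f_0(m^k)$ (with equality, by our choice of $A^k$) and $\Phi^2_\eta\to\sum_{k\neq\ell}\sum_{i,j}\tfrac{\Gamma_{ij}}{2}m_i^k m_j^\ell G_\TT(x^k-x^\ell)$, yielding $\lim F_\eta(v_\eta)=F_0(v_0)$.

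The main obstacle I anticipate is the rigorous passage to the limit in $\Phi^1$ in the $\liminf$ step, i.e.\ the lower semicontinuity of the doubly-logarithmic integrals $\int\int \log\tfrac{1}{|x-y|}\,dx\,dy$ under $L^1$ convergence of characteristic functions on sets of uniformly bounded diameter; this will require a uniform integrability argument (exploiting that each $A^k_\eta$ sits in a ball of fixed radius) combined with Fatou-type inequalities, and then matching the limit to the $f_0$-infimum using the $\GG$-optimality of the limit cluster. A secondary, more delicate, issue is ruling out near-collisions $d(\xi^k_\eta,\xi^\ell_\eta)\to 0$, since Proposition~\ref{MinThm}(iii) by itself permits $d\to 0$ sublogarithmically in $\eta$; the remedy is to isolate the diverging logarithmic contribution in the $(k,\ell)$-term of $\Phi^2_\eta$ and derive a quantitative lower bound that contradicts $F_\eta\le C$ whenever two centers collapse.
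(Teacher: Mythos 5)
Your proposal is correct and follows essentially the same route as the paper's own (sketched) proof: the same decomposition of $F_\eta$ into the nonnegative bracket, the self-interaction $\Phi^1$ handled by dominated convergence on uniformly bounded-diameter clusters, the exclusion of collapsing centers via the logarithmic divergence in $\Phi^2_\eta$, and the recovery sequence built from rescaled $f_0$-optimal core shells placed at the $x^k$. The obstacles you flag (uniform integrability of the logarithmic kernel and the quantitative collision argument) are precisely the points the paper treats, respectively, by local integrability of $\log\frac{1}{|x-y|}$ on sets of bounded diameter and by the explicit expansion $G_{\TT}=\frac{1}{2\pi}\log\frac{1}{|\cdot|}+R_{\TT}$ in the off-diagonal terms.
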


The tools required for the proof of the second limit are essentially included in our paper \cite{ablw}, in the unweighted case.  We provide a slightly more detailed sketch here for the case $\beta_1=0$, as the paper \cite{ablw} concentrated on the first $\Gamma$-limit and the structure of minimizers and did not give details for the second $\Gamma$-limit.

\begin{proof}[Sketch of the proof of Theorem \ref{secondGammalimit}]
We begin with compactness and the lower limit.  Take a sequence $\eta_n\to 0$ and assume $v_{\eta_n}\in X_\eta$ is a sequence with bounded $F_{\eta_n}(v_{\eta_n})$.  (Recall $F_\eta$ is defined in \eqref{Feta}.)  For convenience, we abuse notation and denote this sequence (and all forthcoming subsequences) simply as $v_\eta$, and $\eta\to 0$ along this sequence (and eventually, certain subsequences.)  By the boundedness of $F_\eta(v_\eta)$ we immediately conclude that
$$   \lim_{\eta\to 0} E_\eta(v_\eta) = \overline{e_0}(M),  $$
and hence Lemma~\ref{components} and Proposition~\ref{MinThm} (Lemma~3.5 in \cite{ablw}) apply to the sequence $v_\eta$.  As a consequence, $v_\eta=\eta^{-2}\chi_{\Om_\eta}$ splits into at most countably many indecomposable clusters, $\Om_\eta=\bigcup_k \Om_\eta^k$, whose energy $E_\eta(v_\eta)$ breaks down via \eqref{firstlimit}.  Moreover, by Corollary~\ref{finiteness} in the core shell case $\beta_1=0$ minimizers of $\overline{e_0}(M)$ can only have a finite number $K\in\NN$ of nontrivial components. Hence,
\begin{align*}
E_\eta(v_\eta) - \overline{e_0}(M) 
  &=  
\sum_{k=1}^{K} \GG(A^k_\eta) - \overline{e_0}(M) 
+ |\log\eta|^{-1} \Phi_\eta(A_{\eta}^{{\color{red}k}},\xi^k_\eta) \\
&\ge \sum_{k=1}^K e_0(|A^k_\eta|) - \overline{e_0}(M) 
+ |\log\eta|^{-1} \Phi_\eta(A^k_\eta,\xi^k_\eta) \\
&\ge |\log\eta|^{-1} \Phi_\eta(A^k_\eta,\xi^k_\eta),
\end{align*}
since $\overline{e_0}(M)\le \sum_{k=1}^K e_0(m^k)$ whenever $\sum_{k=1}^K m^k=M$.  That is,  $\Phi_\eta(A^k_\eta,\xi^k_\eta)\le F_\eta(v_\eta)\le C$ are uniformly bounded as $\eta\to 0$.

Next, we pass to the limit in $\Phi^1(\{A^k_\eta\}_{k\in\NN})$ in \eqref{interaction}.  By \eqref{MT1}, for each $k$,  the clusters $|A^k_\eta \triangle A^k|\to 0$.  In addition, by Lemma~\ref{components} (c), the diameters diam$\,{A^k_\eta}\le C$ are uniformly bounded in $k,\eta$.  As $\log{1\over |x-y|}$ is locally integrable, we may then pass to the limit using dominated convergence,
\begin{equation}\label{Flim1}
\lim_{\eta\to 0} \Phi^1(\{A^k_\eta\}_{k\in\NN}) =
\sum_{i,j=1,2} \frac{\Gamma_{ij} }{2}  \left [ 
\frac{1}{2\pi}  \int_{A_i}\int_{A_j}  \log { 1 \over |x-y|} dx\, dy
+ m_i\, m_j\, R_{\TT}(0)\right]  \ge \sum_{k=1}^K  f_0(m^k).
\end{equation}

Since $\Phi^1$ converges, we conclude that the second interaction term $\Phi^2_\eta(\{A^k_\eta\}_{k\in\NN},\{\xi^k_\eta\}_{k\in\NN})$ is also bounded above.  To pass to the limit in this term, we first note that if for some pair $k\neq\ell$, $d(\xi^{k}_\eta,\xi^\ell_\eta)\ge \delta_{k,\ell}>0$ for all $\eta>0$, then by a similar argument as above we may pass to the limit in this $k,\ell$ term and obtain the corresponding term in $F_0$ (see \eqref{secondlimit}.)  We claim that this must be the case for all pairs $k\neq \ell$.  Indeed, we assume for some $k\neq\ell$ (and a subsequence $\eta\to 0$ that $r_\eta^{k,\ell}:=d(\xi^{k }_\eta,\xi^\ell_\eta)\to 0$, then recall that by Proposition~\ref{MinThm}~(iii) we must nevertheless have 
 for $i,j=1,2$,
\begin{align*}
\int_{ A_{i,\eta}^k}\int_{ A_{j,\eta}^\ell} G_{\mathbb{T}^2} (\xi_\eta^k+ \eta \tilde x - \xi_\eta^\ell - \eta \tilde y)\, d \tilde x\, d \tilde y 
&= \int_{ A_{i,\eta}^k}\int_{ A_{j,\eta}^\ell} \left[
   {1\over 2\pi}\log{1\over |\xi^k_\eta-\xi^\ell_\eta|} + 
R_{\mathbb{T}^2} (\xi_\eta^k+ \eta \tilde x - \xi_\eta^\ell - \eta \tilde y)\right] d \tilde x\, d \tilde y   \\
& = |A_{i,\eta}^k||A_{j,\eta}^\ell|\left( {1\over 2\pi}|\log r^{k,\ell}_\eta| + R_{\mathbb{T}^2}(0) + o(1)\right).
\end{align*}
Since $|A^k|>0$ for $k=1,\dots,K$, there exists $i,j\in\{1,2\}$ for which $|A^k_i||A^\ell_j|>0$, and that term will be unbounded above.  This contradicts the upper bound on $\Phi^2_\eta(\{A^k_\eta\}_{k\in\NN},\{\xi^k_\eta\}_{k\in\NN})$, and hence we conclude that the points $\{\xi^k_\eta\}_{k=1,\dots,K}$ must remain distinct as $\eta\to 0$.  Passing to a further subsequence if necessary, we may assume $\xi^k_\eta\to x^k\in \TT$, with distinct $x^k\neq x^\ell$, $k\neq\ell$, and 
$$  \liminf_{\eta\to 0} F_\eta(v_\eta) \ge \liminf_{\eta\to 0} \Phi_\eta(A^k_\eta,\xi^k_\eta)
   \ge F_0(v_0),  $$
   where $v_0=\sum_{k=1}^K m^k\delta_{x^k}\in Y_M$. 
 Note that (repeating an argument from above) by \eqref{MT1} and the uniform boundedness of the diameters of $A^k_\eta$, we may also conclude that $v_\eta\wra v_0$ in the weak topology on the space of measures on $\TT$.  
   This concludes the proof of  the compactness and lower limit of the second $\Gamma$-limit.
   
   The construction of a recovery sequence for part 2 is straightforward:  given points $x^k$ and masses $m^k=(m^k_1,m^k_2)$, it suffices to assemble a superposition of $\eta$-rescaled minimizers of $e_0(m^k)$ at each $x^k$.  When the minimizing cluster $A^k$ is a core shell, the central disk is positioned
   centered when $\Gamma_{12}$ is small according to Proposition~\ref{concentric}.
\end{proof}

\end{document}